\theoremstyle{change} 
\newtheorem{theorem}{Theorem}[section] 
\newtheorem{lemma}[theorem]{Lemma} 
\newtheorem{proposition}[theorem]{Proposition}
\newtheorem{corollary}[theorem]{Corollary}
\newtheorem{remark}[theorem]{Remark}
\newtheorem{example}[theorem]{Example}
\newtheorem{examples}[theorem]{Examples}
\newtheorem{definition}[theorem]{Definition}
\newtheorem{nothing}[theorem]{} 
\newtheorem{hypothesis}[theorem]{Hypothesis}
\newenvironment{proof}{\noindent{\bf Proof}\ }{\qed\bigskip}
\renewcommand{\le}{\leqslant}
\renewcommand{\ge}{\geqslant} 
\renewcommand{\unlhd}{\trianglelefteqslant}
\renewcommand{\trianglelefteq}{\trianglelefteqslant}
\newcommand{\mylabel}[1]{\label{#1}\marginpar{#1}}
\renewcommand{\marginpar}[1]{}
\newcommand{\alphatilde}{\tilde{\alpha}}
\newcommand{\Aut}{\mathrm{Aut}}
\newcommand{\Atilde}{\tilde{A}}
\newcommand{\Attilde}{\tilde{\tilde{A}}}
\newcommand{\betatilde}{\tilde{\beta}}
\newcommand{\Btilde}{\tilde{B}}
\newcommand{\calA}{\mathcal{A}}
\newcommand{\calB}{\mathcal{B}}
\newcommand{\calC}{\mathcal{C}}
\newcommand{\calF}{\mathcal{F}}
\newcommand{\calL}{\mathcal{L}}
\newcommand{\calS}{\mathcal{S}}
\newcommand{\calT}{\mathcal{T}}
\newcommand{\catfont}{\mathsf}
\newcommand{\cdotG}{\cdot_G}
\newcommand{\cdotH}{\cdot_H}
\newcommand{\cdotK}{\cdot_K}
\newcommand{\End}{\mathrm{End}}
\newcommand{\Func}{\catfont{Func}}
\newcommand{\Fus}{\mathrm{Fus}}
\newcommand{\Hom}{\mathrm{Hom}}
\newcommand{\id}{\mathrm{id}}
\newcommand{\Idem}{\mathrm{Idem}}
\newcommand{\Ind}{\mathrm{Ind}}
\newcommand{\Inj}{\mathrm{Inj}}
\newcommand{\Injbar}{\overline{\Inj}}
\newcommand{\Inn}{\mathrm{Inn}}
\newcommand{\Jtilde}{\tilde{J}}
\newcommand{\myiso}{\buildrel\sim\over\to}
\newcommand{\lexp}[2]{\setbox0=\hbox{$#2$} \setbox1=\vbox to
                 \ht0{}\,\box1^{#1}\!#2}
\newcommand{\liso}{\buildrel\sim\over\longrightarrow}
\newcommand{\lModstar}[1]{\llap{\phantom{|}}_{#1}\catfont{Mod}^*}
\newcommand{\mubar}{\overline{\mu}}
\newcommand{\NN}{\mathbb{N}}
\newcommand{\qed}{\nobreak\hfill
                   \vbox{\hrule\hbox{\vrule\hbox to 5pt
                   {\vbox to 8pt{\vfil}\hfil}\vrule}\hrule}}
\newcommand{\QQ}{\mathbb{Q}}
\newcommand{\Rbar}{\bar{R}}
\newcommand{\Out}{\mathrm{Out}}
\newcommand{\stab}{\mathrm{stab}}
\newcommand{\Sys}{\mathrm{Sys}}
\newcommand{\tautilde}{\tilde{\tau}}
\newcommand{\tr}{\mathrm{tr}}
\newcommand{\trl}{\lhd}
\newcommand{\varphibar}{\overline{\varphi}}
\newcommand{\vtilde}{\tilde{v}}
\newcommand{\Vtilde}{\tilde{V}}
\newcommand{\xtilde}{\tilde{x}}
\newcommand{\ytilde}{\tilde{y}}
\newcommand{\ZZ}{\mathbb{Z}}
\newcommand{\calD}{\mathcal{D}}
\newcommand{\Hombar}{\overline{\Hom}}
\newcommand{\sigmatilde}{\tilde{\sigma}}
\newcommand{\Syl}{\mathrm{Syl}}
\newcommand{\cdotS}{\cdot_{S}}
\newcommand{\Ftilde}{\tilde{\mathcal{F}}}
\newcommand{\Fix}{\mathrm{Fix}}
\title{A ghost ring for the left-free double Burnside ring and an application to fusion 
  systems\footnote{{\bf MR Subject Classification:} 19A22, 20C20. {\bf Keywords:} Burnside 
    ring, double Burnside ring, biset, biset functor, fusion system, Brauer construction, 
    mark homomorphism, ghost ring}}
\author{\small Robert Boltje\\
  \small Department of Mathematics\\
  \small University of California\\
  \small Santa Cruz, CA 95064\\
  \small U.S.A.\\
  \small boltje@ucsc.edu
  \and
  \small Susanne Danz\footnote{{\bf Current address:} Department of Mathematics, University of Kaiserslautern, P.O. Box 3049, 
         67653 Kaiserslautern, Germany, danz@mathematik.uni-kl.de}\\
  \small Mathematical Institute, University of Oxford\\
  \small 24-29 St Giles'\\
  \small Oxford, OX1 3LB\\
  \small United Kingdom\\
  \small danz@maths.ox.ac.uk}
\date{November 3, 2011}
\begin{document}

\sloppy

\maketitle


\begin{abstract}
\noindent 
For a finite group $G$, we define a ghost ring and a mark 
homomorphism for the double Burnside ring $B^{\trl}(G,G)$ of left-free 
$(G,G)$-bisets. In analogy to the case of the Burnside ring $B(G)$, the 
ghost ring has a much simpler ring structure, and after tensoring with 
$\QQ$ one obtains an isomorphism of $\QQ$-algebras. 
As an application of a key lemma, we 
obtain a very general formula for the Brauer construction 
applied to a tensor product of two $p$-permutation bimodules $M$ and 
$N$ in terms of Brauer constructions of the bimodules $M$ and $N$. Over 
a field of characteristic $0$ we determine the simple modules of the left-free
double Burnside algebra and prove semisimplicity results for the bifree double Burnside algebra. 
These results 
carry over to results about biset-functor categories. Finally, we apply 
the ghost ring and mark homomorphism to fusion systems on a finite 
$p$-group. We extend a remarkable bijection, due to Ragnarsson and 
Stancu, between saturated fusion systems and certain idempotents of the bifree
double Burnside algebra over $\ZZ_{(p)}$ to a bijection between all fusion
systems and a larger set of idempotents in the bifree double Burnside algebra over $\QQ$.
\end{abstract}


\section*{Introduction}
The {\em Burnside ring} $B(G)$ of a finite group $G$, the Grothendieck 
ring of the category of finite $G$-sets, has proved to be an object of
central importance for the representation theory of finite groups. It 
is an initial object in functorial approaches to the representation 
theory of finite groups, in the sense that there is a unique functorial 
homomorphism from the Burnside ring to any other representation ring 
of $G$, for example to the character ring. Induction theorems that are 
now cornerstones of representation theory are proved through the study 
of these homomorphisms, see \cite{Dress1}. The Burnside ring is also an 
important invariant of the group $G$ itself. For instance, a fundamental 
theorem of Dress, cf.~\cite{Dress2}, states that $G$ is soluble if 
and only if $B(G)$ is connected, i.e., if $0$ and $1$ are the only idempotents 
of $B(G)$. The main tool in achieving these results and studying the 
ring structure of $B(G)$ is the {\em mark homomorphism}
$\Phi\colon B(G)\to\prod_{H\le G}\ZZ$; it is an injective ring homomorphism 
whose image is contained in the ring $\Btilde(G)$ of $G$-fixed points 
of the latter product ring, where $G$ acts by permutation of the components 
with respect to the conjugation action on the set of subgroups $H$ of $G$. 
The ring $\Btilde(G)$ is often called the {\em ghost ring} of $B(G)$. It 
can be interpreted as the integral closure of $B(G)$ in $\QQ B(G):=\QQ\otimes_{\ZZ} B(G)$. 
The image of $\Phi$ has finite index in $\Btilde(G)$, so that $\Phi$ induces 
an isomorphism of $\QQ$-algebras $\QQ B(G)\to (\prod_{H\le G}\QQ)^G=\QQ\Btilde(G)$ 
after scalar extension to $\QQ$.

\medskip
The main purpose of this paper is to construct a ghost ring and a mark 
homomorphism for certain subrings of the {\em double Burnside ring} 
$B(G,G)$ such that these ghost rings have a simpler multiplicative 
structure. The ring $B(G,G)$ is the Grothendieck group of the category of 
finite $(G,G)$-bisets. More generally, for finite groups $G$ and $H$, 
one constructs $B(G,H)$ as the Grothendieck group of the category of 
finite $(G,H)$-bisets. If $K$ is another finite group, a construction 
on bisets that is similar to the tensor product of bimodules induces a bilinear 
map $B(G,H)\times B(H,K)\to B(G,K)$. This defines the ring structure 
on $B(G,G)$. Considering the subcategories of left-free and bifree 
$(G,H)$-bisets, one obtains subgroups $B^{\Delta}(G,H)\subseteq B^{\trl}(G,H)\subseteq B(G,H)$ 
that are stable under the bilinear map. In particular, one has 
unitary subrings $B^\Delta(G,G)\subseteq B^{\trl}(G,G)\subseteq B(G,G)$. 
One should point out that these rings are not commutative, in contrast 
to $B(G)$. We refer the reader to work of Bouc, cf.~\cite{BoucJA} and 
\cite{BoucSLN}, for motivations and fundamentals on the double Burnside 
group. More recently, this group has become a focus of attention through 
applications to modular representation theory (cf.~\cite{Boucendo}), 
connections with algebraic topology (cf.~\cite{MartinoPriddy}, \cite{BLO1}) 
and connections with fusion systems on $p$-groups (cf.~\cite{BLO2}, \cite{R} and \cite{RS}).

In this paper we construct bifree and left-free ghost groups 
$\Btilde^\Delta(G,H) \subseteq \Btilde^{\trl}(G,H)$ and a mark homomorphism 
$\rho_{G,H}\colon B^{\trl}(G,H)\to\Btilde^{\trl}(G,H)$, which restricts to a 
homomorphism $B^\Delta(G,H)\to \Btilde^\Delta(G,H)$. We also construct a 
bilinear map $\Btilde^{\trl}(G,H)\times \Btilde^{\trl}(H,K)\to\Btilde^{\trl}(G,K)$ 
such that the maps $\rho$ commute with the bilinear maps. We show that 
$\rho_{G,H}$ is injective and has finite cokernel.
The construction of the ghost ring and the mark homomorphism is carried 
out in Section~\ref{section left-free ghost group}, and 
the main properties are proved in Theorem~\ref{thm mark 1}.

\medskip
The key observation to the construction of the ghost rings and mark 
homomorphisms is Theorem~\ref{thm fixed points 1}, which shows how to 
express fixed points of the tensor product of two left-free bisets 
$X$ and $Y$ in terms of fixed points of $X$ and $Y$. As an 
immediate application we derive a formula (see Theorem~\ref{thm Brauer construction 1}) 
for the Brauer construction of the tensor product of two $p$-permutation 
bimodules $M$ and $N$ in terms of the Brauer constructions of $M$ and $N$. 
Special cases and variations of this formula were known before, see for instance 
\cite[Section~4]{Ri}, \cite[Corollary~3.6]{BX} and  \cite[Theorem~9.2]{L2}.

\medskip
One obvious application of the ghost rings and mark homomorphisms is 
that 
they give a different perspective on the ring structures of $B^\Delta(G,G)$ 
and $B^{\trl}(G,G)$, especially if tensored with $\QQ$. The ghost ring of 
$B^{\Delta}(G,G)$ decomposes into a direct product of rings, indexed by 
the isomorphism classes of subgroups of $G$. We are able to give three 
alternative descriptions of this ghost ring. One of them is a direct 
product of endomorphism rings of permutation modules over outer automorphism 
groups of subgroups of $G$. These constructions are given in Section~\ref{section BDeltatilde}. 
Their main properties are stated in Theorem~\ref{thm T-decomposition 1},
Theorem~\ref{thm sigma} and Theorem~\ref{thm sigmatilde}, respectively. 
The last one of these variations is applied later to fusion systems on 
finite $p$-groups. The ghost ring $\Btilde^{\trl}(G,G)$ carries a natural 
grading whose component in degree $0$ is the subring $\Btilde^\Delta(G,G)$. 
Tensoring with $\QQ$ leads to a direct sum decomposition 
$\QQ\Btilde^{\trl}(G,G)=\QQ\Btilde^\Delta(G,G)\oplus J(\QQ\Btilde^{\trl}(G,G))$, 
where the second summand denotes the Jacobson radical. Via the mark 
isomorphism, this leads to a decomposition $\QQ B^{\trl}(G,G)=\QQ B^\Delta(G,G)\oplus J$, 
where $J$ denotes the Jacobson radical of $\QQ B^\trl(G,G)$. Therefore, 
there is a natural bijection between the simple modules of
$\QQ B^{\trl}(G,G)$ and those of $\QQ B^{\Delta}(G,G)$.

\medskip
In \cite{BoucJA} Bouc defines a category whose objects are the finite groups, 
whose morphism sets are the groups $B(G,H)$ and whose composition law is 
the bilinear map mentioned above. Representation groups of finite groups 
can be considered as additive functors (biset functors) on this category. 
The structure of a biset functor was a key tool in the classification 
of endo-permutation modules of $p$-groups, achieved by Bouc, Th\'evenaz, Carlson,
and others. The category of left-free (respectively, bifree) biset functors 
is equivalent to a module category of the algebra
$A^{\trl}:=\bigoplus_{G,H} B^{\trl}(G,H)$ 
(respectively $A^\Delta:=\bigoplus_{G,H} B^\Delta(G,H)$), endowed with a natural 
multiplication. Through the above mark homomorphisms these algebras embed 
into the ghost algebras $\Atilde^{\trl}$ (respectively $\Atilde^\Delta$) 
whose multiplicative structure is much cleaner. Over the rational numbers 
we even obtain isomorphisms of algebras shedding new light on the biset 
functors. For instance, known results about the semisimplicity of the 
bifree-biset-functor category, cf.~\cite[Corollary~9.2]{W}, become very
clear from this point of view, cf.~Remark~\ref{rem Mackey semisimple}.
Turning to left-free biset functors, one can see an obvious structure of 
graded algebra on $\QQ\Atilde^{\trl}$, which is not apparent on 
$\QQ A^{\trl}$. This leads to a filtration on any left-free biset 
functor over $\QQ$, cf.~Remark~\ref{rem biset functor filtration}. 

\medskip
All the above results we actually state and prove in vastly greater 
generality. For exposition reasons we have only described them here in 
the case of the bifree and left-free double Burnside groups. More generally, 
one can fix a class $\calD$ of finite groups and, for every pair $(G,H)$ of 
groups in $\calD$, a set $\calS_{G,H}$ of subgroups of $G\times H$ and then 
consider the Grothendieck group $B^{\calS}(G,H)$ of finite bisets whose point
stabilizers belong to $\calS_{G,H}$. We need to require some axioms on 
the selection of the sets $\calS_{G,H}$, which are stated in Hypothesis~\ref{hyp S1}. 
The reason for formulating results in this generality is to allow, for future applications, more 
flexible classes of biset functors, and to be able to apply the theory developed 
so far to fusion systems in the last section of the current paper.

\medskip
The notion of a fusion system $\calF$ on a finite $p$-group originated from work of 
Puig about 40 years ago, and has recently 
got a lot of attention in modular 
representation theory of finite groups, since every block defines a fusion 
system on a defect group, in algebraic topology (cf.~\cite{BLO1} and 
\cite{BLO2}), and in abstract group theory (cf.~\cite{Asch1} and \cite{Asch2})
as a potential new avenue to the classification of finite simple groups. 
The saturated fusion systems play a particularly fundamental role. 
In this paper we observe (cf.~Theorem~\ref{thm Fus Sys bijection}) 
that the set of fusion systems on a finite $p$-group $S$ is isomorphic (as a partially ordered set) 
to the set of systems of subgroups of $S\times S$ satisfying the axioms 
in Hypothesis~\ref{hyp S1}. We propose to study the associated subring 
$B^{\calF}(S,S)$ of the bifree double Burnside ring $B^{\Delta}(S,S)$ as 
an algebraic invariant of a fusion system. In \cite{RS}, Ragnarsson and 
Stancu described a remarkable bijection between the set of saturated 
fusion systems on $S$ and a certain set of idempotents in 
$\ZZ_{(p)} B^\Delta(S,S)$. Using the mark homomorphism for $B^\Delta(S,S)$, we 
are able to explicitly determine these idempotents as elements in the 
ghost algebra $\QQ\Btilde^\Delta(S,S)$, and extend the bijection of Ragnarsson--Stancu 
to a bijection between the set of all fusion systems on $S$ 
and a set of idempotents in $\QQ\Btilde^\Delta(S,S)$, 
cf.~Theorem~\ref{thm bijection fs idemp}. In the end, we consider the class 
of fusion systems whose associated idempotents are mapped, under the mark 
homomorphism, to idempotents in $\ZZ_{(p)}\Btilde^\Delta(S,S)$. By definition,
this class contains the class of saturated fusion systems. We show that it is 
strictly bigger (cf.~Example~\ref{expl not sat}) and that it shares 
some properties with the class of saturated fusion systems 
(cf.~Proposition~\ref{prop generalized sat fs}).

\medskip
The present paper is arranged as follows: in Section~\ref{section bisets} we introduce the 
necessary notions and recall the necessary results on the double 
Burnside group and biset functors. In Section~\ref{section fixed points} we show how one 
can express the fixed points of the tensor product of two bisets $X$ 
and $Y$ in terms of fixed points of $X$ and fixed points of $Y$. 
This has an immediate application, expressing the Brauer construction 
applied to a tensor product of $p$-permutation bimodules $M$ and $N$ 
in terms of Brauer constructions of $M$ and $N$, which is carried 
out in Section~\ref{section brauer}. In Section~\ref{section left-free ghost group} 
we introduce the construction of 
the ghost group $\Btilde^{\trl}(G,H)$ and the mark homomorphism 
$\rho_{G,H}\colon B^{\trl}(G,H)\to\Btilde^{\trl}(G,H)$, and we prove their main 
properties. Section~\ref{section BDeltatilde} continues with a closer study of the 
group $B^\Delta(G,H)$, 
for which we introduce an alternative ghost group consisting of homomorphism 
groups and an alternative mark homomorphism $\sigma_{G,H}$. This leads 
to the semisimplicity results regarding $\QQ B^\Delta(G,G)$ and related biset 
functor categories, and to the determination of simple modules of 
$\QQ B^\Delta(G,G)$ and of simple biset functors. In Section~\ref{section Btrltilde} we 
show that $\QQ B^{\trl}(G,H)$ has a natural grading, turning
$\QQ B^{\trl}(G,G)$ and the biset algebra $\QQ A^{\trl}$ into graded 
$\QQ$-algebras. This enables us to see that the simple modules for $\QQ B^{\trl}(G,G)$ 
(respectively $\QQ A^{\trl}$) are the same as for $\QQ B^\Delta(G,G)$ (respectively $\QQ A^\Delta$). 
Finally, in Section~\ref{section fusion systems} we give an application to fusion systems.

\bigskip
{\bf Acknowledgements.} The authors' research on this project
has been supported through a Marie Curie Intra-European Fellowship
(grant PIEF-GA-2008-219543). Both authors wish to thank the Mathematical 
Institute of the University of Oxford and the Mathematics Department of 
UC Santa Cruz for their hospitality during mutual visits.
The authors would also like to thank K\'ari Ragnarsson for
helpful comments on the manuscript and, in particular, for
bringing the work of Reeh \cite{Re} to their attention.


\section{Bisets and the double Burnside group} \mylabel{section bisets}

Throughout this section, $G$, $H$, and $K$ denote finite groups. 
Moreover, $R$ denotes an associative unitary commutative ring.
We recall the definition and basic properties of $(G,H)$-bisets 
and their Grothendieck group, the double Burnside group $B(G,H)$. 
As a convention throughout this paper, $G$-sets without further 
specification will always be assumed to be {\em finite left} $G$-sets. 
We refer the reader to \cite{BoucJA}, \cite{BoucHA}, \cite{BoucSLN}, 
and \cite[\S 80]{CR} for more details concerning the results of this section.

\begin{nothing}
{\sl Notation.}\quad
We will write $U\le G$ to indicate that $U$ is a subgroup of $G$.
For $g\in G$ we denote the inner automorphism $x\mapsto gxg^{-1}$ of 
$G$ by $c_g$, and for $U\le G$ we set $\lexp{g}{U}:=gUg^{-1}$ and 
$U^g:=g^{-1}Ug$. If $U$ and $V$ are subgroups of $G$ we write $U\le_G V$ 
if $U$ is $G$-conjugate to a subgroup of $V$, and we write $U=_G V$ if 
$U$ and $V$ are $G$-conjugate. Moreover, we denote by $\Hom_G(U,V)$ 
the set of group homomorphisms $\varphi\colon U\to V$ such that there 
exists some $g\in G$ with $\varphi(u)=c_g(u)$ for all $u\in U$. We also 
write $\Aut_G(U)$ instead of $\Hom_G(U,U)$. Note that the homomorphism 
$N_G(U)\to\Aut(U)$, $g\mapsto c_g$, induces an isomorphism 
$N_G(U)/C_G(U)\to\Aut_G(U)$. Here, $N_G(U)$ and $C_G(U)$ denote the normalizer 
and centralizer of $U$ in $G$, respectively. We write $\Sigma_G$ for the set of subgroups of $G$. 
If $T$ is any other group then $\Sigma_G(T)$ denotes the set of subgroups of $G$ that are
isomorphic to $T$.

For any finite subset $C$ of an 
abelian group or a module we write $C^+$ for the sum of the elements of $C$.

The set of positive (respectively, non-negative) integers will be denoted by $\NN$ (respectively, $\NN_0$).
\end{nothing}

\begin{nothing}\mylabel{noth bisetdef}
{\sl Bisets.}\quad
Recall that a {\em $(G,H)$-biset} is a {\em finite} set $X$, endowed 
with a left $G$-action and a right $H$-action that commute with each 
other. Together with the {\em $(G,H)$-equivariant} maps, the $(G,H)$-bisets 
form a category. Every $G\times H$-set $X$ can be regarded as a $(G,H)$-biset 
and vice versa, by defining
\begin{equation*}
  gxh:= (g,h^{-1})x\quad \text{and}\quad (g,h)x:=gxh^{-1}
\end{equation*}
for $x\in X$, $g\in G$, $h\in H$. We freely use these identifications 
without further notice. Thus, if $L\le G\times H$ and if $X$ is a 
$(G,H)$-biset we can speak of the {\em $L$-fixed points} $X^L$ of $X$ 
and of the $G\times H$-orbits (or just {\em orbits}) of $X$.

Note that the group $G$ is an $(H,K)$-biset for any two subgroups $H$ and 
$K$ of $G$, by using left and right multiplication.

A $(G,H)$-biset is called {\em left-free} if the left $G$-action is free 
(i.e., if every element has trivial $G$-stabilizer), 
it is called {\em right-free} if the right $H$-action is free, and it is 
called {\em bifree} if it is left-free and right-free.
\end{nothing}

\begin{nothing}\mylabel{noth bisetprod}
{\sl Tensor product of bisets.}\quad
Let $X$ be a $(G,H)$-biset and let $Y$ be an $(H,K)$-biset. The cartesian 
product $X\times Y$ becomes a $(G,K)$-biset, by setting $g(x,y)k:=(gx,yk)$ 
for $x\in X$, $y\in Y$, $g\in G$, and $k\in K$. Moreover, $X\times Y$ is a 
left $H$-set via $h(x,y):=(xh^{-1},hy)$. The $H$-action commutes with the 
$G\times K$-action, and the set $X\times_H Y$ of $H$-orbits of $X\times Y$ 
inherits a $(G,K)$-biset structure. The $H$-orbit of the element 
$(x,y)\in X\times Y$ will be denoted by $x\times_H y\in X\times_H Y$. If 
$L$ is another group and $Z$ is a $(K,L)$-biset then 
$(X\times_H Y)\times_K Z\cong X\times_H(Y\times_K Z)$ as $(G,L)$-bisets, 
under $(x\times_H y)\times_K z\mapsto x\times_H(y\times_K z)$. Note also 
that for a $(G,H)$-biset $X$ one has isomorphisms
\begin{equation*}
  G\times_G X\to X,\ g\times_G x\mapsto gx,\quad\text{and} \quad X\times_H H\to X,\ x\times_H h\mapsto xh\,,
\end{equation*}
of $(G,H)$-bisets.
\end{nothing}

\begin{nothing}\mylabel{noth opposite biset}
{\sl Opposite biset.}\quad
For a $(G,H)$-biset $X$ we denote by $X^\circ$ its {\em opposite} biset. 
Thus, $X^\circ$ is an $(H,G)$-biset whose underlying set is equal to $X$ 
and whose biset structure is given by $h x^\circ g:= (g^{-1}xh^{-1})^\circ$,
for $x\in X$, $g\in G$, and $h\in H$. Here, we write $x^\circ$ if we view 
the element $x$ of $X$ as an element in $X^\circ$. Note that $X$ and 
$(X^\circ)^\circ$ are isomorphic as $(G,H)$-bisets. If $G=H$ and $X$ is 
isomorphic to $X^\circ$ as $(G,G)$-biset then we say that $X$ is {\em symmetric}.

The {\em opposite} of a subgroup $L$ of $G\times H$ is defined by
\begin{equation*}
  L^\circ:=\{(h,g)\in H\times G\mid (g,h)\in G\times H\}\,.
\end{equation*}
Note that $L^\circ$ is a subgroup of $H\times G$ and that 
\begin{equation*}
  (H\times G)/L^\circ \to (G\times H/L)^\circ\,,\quad (h,g)L^\circ\mapsto ((g,h)L)^\circ\,,
\end{equation*}
is an isomorphism of $(H,G)$-bisets.
\end{nothing}

\begin{nothing}
{\sl Subgroups of $G\times H$.}\quad
We denote the canonical projections $G\times H\to G$ and $G\times H\to H$ by 
$p_1$ and $p_2$, respectively. For a subgroup $L$ of $G\times H$ we also set 
\begin{equation*}
  k_1(L):=\{g\in G\mid (g,1)\in L\}\quad\text{and}\quad k_2(L):=\{h\in H\mid (1,h)\in L\}\,.
\end{equation*}
Then $k_i(L)\trianglelefteq p_i(L)$ for $i=1,2$, and the projection $p_i$ 
induces an isomorphism $\bar{p}_i\colon L/(k_1(L)\times k_2(L))\to p_i(L)/k_i(L)$. Thus, 
\begin{equation*}
  \eta_L=\bar{p}_1\circ\bar{p}_2^{-1}\colon\ p_2(L)/k_2(L)\myiso p_1(L)/k_1(L)
\end{equation*}
is an isomorphism satisfying $\eta_L(hk_2(L))= gk_1(L)$ for all $(g,h)\in L$.
This construction defines a bijection between the set of subgroups of 
$G\times H$ and the set of quintuples $(A_1,B_1,\eta,A_2,B_2)$, where 
$A_1\trianglelefteq B_1\le G$, $A_2\trianglelefteq B_2\le H$, and 
$\eta\colon B_2/A_2\liso B_1/A_1$ is an isomorphism. The inverse of this bijection assigns
to a quintuple $(A_1,B_1,\eta,A_2,B_2)$ the subgroup 
$\{(g,h)\in G\times H \mid \eta(hA_2)=gA_1\}$ of $G\times H$.

In this paper we are mostly interested in the set $\trl_{G,H}$ of 
subgroups $L$ of $G\times H$ with $k_1(L)=1$. These are related to 
left-free bisets by the next proposition. As a special case of the 
above (with $A_1=1$) these groups can be described as follows. Let 
$E_{G,H}$ denote the set of triples $(U,\alpha,V)$, where $U\le G$, $V\le H$ 
and $\alpha\colon V\to U$ is an epimorphism. For $(U,\alpha,V)\in E_{G,H}$ we set
\begin{equation*}
  \trl (U,\alpha,V):=\{(\alpha(h),h)\in G\times H\mid h\in V\}\,.
\end{equation*}
Note that if $L=\trl(U,\alpha,V)$ then $p_1(L)=U$, $p_2(L)=V$, $k_1(L)=1$ and 
$k_2(L)=\ker(\alpha)$. This construction defines a bijection\marginpar{eqn Etrl bijection}
\begin{equation}\label{eqn Etrl bijection}
   E_{G,H}\liso \trl_{G,H}\,,\quad (U,\alpha,V)\mapsto \trl(U,\alpha,V)\,.
\end{equation}
The group $G\times H$ acts on both sets via conjugation. More precisely, on $E_{G,H}$ it acts via 
\begin{equation*}
  \lexp{(g,h)}{(U,\alpha,V)}:=(\lexp{g}{U},c_g\alpha c_h^{-1},\lexp{h}{V})\,.
\end{equation*}
With this definition, the bijection in (\ref{eqn Etrl bijection}) is $G\times H$-equivariant.

If $\alpha\colon V\to U$ from above is an isomorphism then we call the 
subgroup $\trl(U,\varphi,V)$ a {\em twisted diagonal} subgroup. In this 
case we sometimes write $\Delta(U,\alpha,V)$ to indicate that $\alpha$ 
is an isomorphism. The set of twisted diagonal subgroups of $G\times H$ 
will be denoted by $\Delta_{G,H}$. Note that a subgroup $L$ of $G\times H$ 
is twisted diagonal if and only if $k_1(L)=1$ and $k_2(L)=1$.
The $G\times H$-equivariant bijection in (\ref{eqn Etrl bijection}) 
restricts to a $G\times H$-equivariant bijection\marginpar{eqn IDelta bijection}
\begin{equation}\label{eqn IDelta bijection}
  I_{G,H}\liso \Delta_{G,H}\,,\quad (U,\alpha, V)\mapsto \Delta(U,\alpha,V)\,,
\end{equation}
where $I_{G,H}$ denotes the set of elements $(U,\alpha,V)\in E_{G,H}$ such that $\alpha$ is an isomorphism. 

As a particular case, if $G=H$ and $U\le G$ then we set $\Delta(U):=\Delta(U,\id,U)$. 
\end{nothing}

The following two propositions are easy consequences of the definitions;
we leave the proofs to the reader. We denote the stabilizer of an element 
$x$ of a $G$-set $X$ by $\stab_G(x)$. If $U\trianglelefteq V\le G$ then we write 
$C_G(U,V)$ for the set of all elements $g\in N_G(U)\cap N_G(V)$ for 
which the conjugation map $c_g$ induces the identity on $V/U$.

\begin{proposition}\mylabel{prop bifree stab}
Let $X$ be a $(G,H)$-biset. Then $X$ is left-free if and only if 
$k_1(\stab_{G\times H}(x))=1$ for all $x\in X$, and $X$ is right-free if 
and only if $k_2(\stab_{G\times H}(x))=1$ for all $x\in X$. Thus, $X$ is 
bifree if and only if $\stab_{G\times H}(x)$ is a twisted diagonal 
subgroup of $G\times H$ for all $x\in X$.
\end{proposition}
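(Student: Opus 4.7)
The plan is to verify the proposition by unfolding the definitions of left-freeness, right-freeness, and of the invariants $k_1$ and $k_2$ of a subgroup of $G \times H$, and then combining the first two statements to obtain the third.

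First I would pick $x \in X$ and compute the left $G$-stabilizer of $x$ in terms of the $(G\times H)$-stabilizer. Under the identification between $(G,H)$-bisets and $G\times H$-sets from \ref{noth bisetdef}, the element $g \in G$ fixes $x$ on the left if and only if $(g,1) \in G \times H$ fixes $x$, i.e. $(g,1) \in \stab_{G\times H}(x)$. By definition of $k_1$, this is equivalent to $g \in k_1(\stab_{G\times H}(x))$. Hence the left $G$-stabilizer of $x$ coincides with $k_1(\stab_{G\times H}(x))$. Since $X$ is left-free precisely when every left $G$-stabilizer is trivial, the first equivalence follows immediately.

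The right-free case is entirely analogous, using that $h \in H$ fixes $x$ on the right if and only if $(1,h^{-1})$ fixes $x$ as a $G\times H$-element, and that $k_2(\stab_{G\times H}(x))$ is a subgroup, so is closed under inversion. Thus the right $H$-stabilizer of $x$ equals $k_2(\stab_{G\times H}(x))$, giving the second equivalence.

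Finally, bifreeness is by definition the conjunction of left- and right-freeness, so $X$ is bifree iff $k_1(\stab_{G\times H}(x)) = 1$ and $k_2(\stab_{G\times H}(x)) = 1$ for all $x \in X$. By the bijection between subgroups of $G \times H$ and quintuples $(A_1,B_1,\eta,A_2,B_2)$ recalled before the proposition, the condition $k_1(L) = 1 = k_2(L)$ on a subgroup $L \le G \times H$ is exactly the condition that $L$ be twisted diagonal (equivalently, lies in the image of the bijection \eqref{eqn IDelta bijection}). This yields the third equivalence and completes the proof. There is no real obstacle here; the only thing to be careful about is the sign convention $(g,h)x = gxh^{-1}$ when translating between the left/right $H$-action and the $G\times H$-action, so that the inverse on the $H$-component does not corrupt the identification of $k_2$ with the right stabilizer.
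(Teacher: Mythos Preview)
Your proof is correct and is exactly the routine unfolding of definitions the paper has in mind: the paper does not give an explicit proof but remarks that this proposition is an easy consequence of the definitions and leaves it to the reader. Your careful handling of the sign convention in the right-free case is the only subtlety, and you treated it correctly.
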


\begin{proposition}\mylabel{prop left-free transversal}
{\rm (a)} Let $L\le G\times H$ be arbitrary. Assume that either (i) 
$\mathcal{A}\subseteq G$ is a transversal for $G/k_1(L)$ and $\mathcal{B}\subseteq H$ 
is a transversal for $H/p_2(L)$, or that (ii) $\mathcal{A}\subseteq G$ is 
a transversal for $G/p_1(L)$ and $\mathcal{B}\subseteq H$ is a transversal for $H/k_2(L)$. 
Then, $\mathcal{A}\times\mathcal{B}\subseteq G\times H$ is a transversal for $G\times H/L$.

\smallskip
{\rm (b)} Let $L\le G\times H$ be arbitrary. Then, for $i=1,2$,
\begin{equation*}
  k_i(N_{G\times H}(L))= C_G(k_i(L),p_i(L))\,.
\end{equation*}
In particular, if $U\le G$, $V\le H$ and if $\alpha\colon V\to U$ is an epimorphism then 
\begin{equation*}
  k_1(N_{G\times H}(\trl(U,\alpha,V)))=C_G(U) \quad\text{and}\quad 
  k_2(N_{G\times H}(\trl(U,\alpha,V)))=C_H(\ker(\alpha),V)\,.
\end{equation*}
In particular, one has $C_H(V)\trianglelefteq k_2(N_{G\times H}(\trl(U,\alpha,V)))$.

\smallskip
{\rm (c)} Let $U\le G$, $V\le H$, and let $\alpha\colon V\to U$ be an isomorphism. Then
\begin{align*}
  k_1(N_{G\times H}(\Delta(U,\alpha,V))) & =C_G(U)\,, \quad
  k_2(N_{G\times H}(\Delta(U,\alpha,V))) =C_H(V)\,, \\ 
  p_1(N_{G\times H}(\Delta(U,\alpha,V)))  
  & = \{g\in N_G(U)\mid \exists h\in N_H(V)\colon c_g=\alpha c_h \alpha^{-1}\}\quad \text{and} \\
  p_2(N_{G\times H}(\Delta(U,\alpha,V)))  
  & = \{h\in N_H(V)\mid \exists g\in N_G(U)\colon c_h=\alpha^{-1} c_g \alpha\}\,.
\end{align*}
\end{proposition}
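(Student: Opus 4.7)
For part (a), the plan is a counting-plus-injectivity argument. First observe that the projection $p_2\colon L\to p_2(L)$ is surjective with kernel $k_1(L)\times\{1\}$, so $|L|=|k_1(L)|\cdot|p_2(L)|$; by symmetry, $|L|=|p_1(L)|\cdot|k_2(L)|$. In case~(i) this gives $|\calA\times\calB|=[G:k_1(L)]\cdot[H:p_2(L)]=|G|\cdot|H|/|L|=|(G\times H)/L|$, and case~(ii) is analogous. It then suffices to show that distinct elements of $\calA\times\calB$ lie in distinct left cosets of $L$. Assuming $(a_1,b_1)L=(a_2,b_2)L$ in case~(i), we have $(a_1^{-1}a_2,b_1^{-1}b_2)\in L$, so $b_1^{-1}b_2\in p_2(L)$ forces $b_1=b_2$; then $(a_1^{-1}a_2,1)\in L$ gives $a_1^{-1}a_2\in k_1(L)$, so $a_1=a_2$. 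Case~(ii) is symmetric.

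For part (b), I would unwind the normalizer condition directly. For $i=1$, an element $g\in G$ lies in $k_1(N_{G\times H}(L))$ precisely when $(g,1)$ normalises $L$, equivalently when $(gxg^{-1},y)\in L$ for every $(x,y)\in L$. Taking $y=1$ shows $g\in N_G(k_1(L))$; for arbitrary $y$, the fact that both $(gxg^{-1},y)$ and $(x,y)$ belong to $L$ forces $gxg^{-1}\in p_1(L)$ and $(gxg^{-1})x^{-1}\in k_1(L)$, whence $g\in N_G(p_1(L))$ and $c_g$ induces the identity on $p_1(L)/k_1(L)$. Conversely, these conditions let us write $gxg^{-1}=kx$ with $k\in k_1(L)$, so $(gxg^{-1},y)=(k,1)(x,y)\in L$. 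Hence $k_1(N_{G\times H}(L))=C_G(k_1(L),p_1(L))$; the case $i=2$ is symmetric. Specialising to $L=\trl(U,\alpha,V)$, for which $k_1(L)=1$, $p_1(L)=U$, $k_2(L)=\ker(\alpha)$ and $p_2(L)=V$, yields the two displayed formulas. The normal inclusion $C_H(V)\trianglelefteq k_2(N_{G\times H}(\trl(U,\alpha,V)))$ is then immediate: any element of $C_H(V)$ certainly normalises both $V$ and its subgroup $\ker(\alpha)$ and acts trivially on $V$, hence on $V/\ker(\alpha)$; normality follows because $C_H(V)\trianglelefteq N_H(V)$ and $k_2(N_{G\times H}(\trl(U,\alpha,V)))$ is contained in $N_H(V)$.

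Part (c) follows from (b) together with a direct computation of the projections. Since $\Delta(U,\alpha,V)$ has $k_1=k_2=1$, (b) immediately gives $k_1(N_{G\times H}(\Delta(U,\alpha,V)))=C_G(U)$ and $k_2(N_{G\times H}(\Delta(U,\alpha,V)))=C_H(V)$. For the projections, unwinding the normaliser condition shows that $(g,h)\in N_{G\times H}(\Delta(U,\alpha,V))$ iff for every $v\in V$ we have $(g\alpha(v)g^{-1},hvh^{-1})\in\Delta(U,\alpha,V)$, which is equivalent to $h\in N_H(V)$, $g\in N_G(U)$, and $\alpha(hvh^{-1})=g\alpha(v)g^{-1}$ for all $v\in V$, i.e.\ $c_g=\alpha\,c_h\,\alpha^{-1}$. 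Projecting onto the first (resp.\ second) coordinate then gives the displayed descriptions of $p_1$ and $p_2$.

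No genuine obstacle is expected; the whole proposition is a careful bookkeeping exercise about subgroups of a direct product. The one subtle ingredient is the order identity $|L|=|k_1(L)|\cdot|p_2(L)|$ used to make the count in~(a) come out exactly, after which the arguments amount to systematically unpacking what it means for $(g,1)$, $(1,h)$ or $(g,h)$ to conjugate $L$ to itself.
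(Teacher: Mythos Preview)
Your proof is correct. The paper does not give its own proof of this proposition; it states that the result is an easy consequence of the definitions and leaves the verification to the reader, and your argument is precisely the straightforward bookkeeping the authors have in mind.
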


Usually, the second projection group in Proposition~\ref{prop left-free transversal}(c) 
is denoted by $N_\alpha$ and the first one is denoted by $N_{\alpha^{-1}}$. 
We will also use this notation in Section~\ref{section fusion systems} in connection with fusion systems.

\begin{nothing}
The {\em double Burnside group} $B(G,H)$ of $G$ and $H$ is defined as the 
Grothendieck group of the category of $(G,H)$-bisets. Using the identification 
in \ref{noth bisetdef}, we may identify $B(G,H)$ with the Burnside group 
$B(G\times H)$. The group $B(G,H)$ is defined as the factor group $F/U$,
 where $F$ is the free abelian group on the set of isomorphism classes 
$\{X\}$ of $(G,H)$-bisets $X$, and $U$ is generated by the elements 
$\{X\coprod X'\}-\{X\}-\{X'\}$, with arbitrary $(G,H)$-bisets $X$ and 
$X'$. Here, $X\coprod X'$ denotes the coproduct (or disjoint union) 
of $X$ and $X'$. The coset of the element $\{X\}\in F$ will be denoted 
by $[X]\in B(G,H)$. Thus $[X\coprod X']=[X]+[X']$.
If $\mathcal{L}$ is a transversal of the conjugacy classes of subgroups 
of $G\times H$ then $\{[G\times H/L]\mid L\in\mathcal{L}\}$ is a $\ZZ$-basis, 
the {\em standard basis}, of $B(G,H)$. One has $[X]=[X']\in B(G,H)$ if and 
only if $X$ and $X'$ are isomorphic $(G,H)$-bisets.

The tensor product construction for bisets in \ref{noth bisetprod} induces a bilinear map
\begin{equation*}
  - \cdot_H - \colon B(G,H)\times B(H,K)\to B(G,K)\,, ([X],[Y])\mapsto [X\times_H Y]\,,
\end{equation*}
where $X$ denotes a $(G,H)$-biset and $Y$ denotes an $(H,K)$-biset. In the case that $G=H=K$, 
this map defines a multiplication on $B(G,G)$ establishing a ring structure 
with identity element $[G]=[G\times G/\Delta(G)]$. This ring is called the {\em double Burnside ring} of $G$.

The construction of the opposite biset induces a group isomorphism
\begin{equation*}
  -^\circ\colon B(G,H)\to B(H,G)\,, \quad [X]\mapsto [X^\circ]\,,
\end{equation*}
satisfying 
\begin{equation*}
  ([X]\cdot_H[Y])^\circ = [Y]^\circ \cdot_H [X]^\circ \in B(K,G)\quad\text{and}\quad
  ([X]^\circ)^\circ=[X]\in B(G,H)\,,
\end{equation*}
for every $(G,H)$-biset $X$ and every $(H,K)$-biset $Y$. In particular, if $G=H=K$, 
this implies that $-^\circ\colon B(G,G)\to B(G,G)$ is an anti-involution of $B(G,G)$.
\end{nothing}

\begin{nothing}
{\sl The $*$-product of subgroups.}\quad
The following proposition, due to Bouc, gives an explicit description 
of the bilinear map $-\cdot_H -$ on the standard basis elements. It requires 
the following notation: for subgroups $L\le G\times H$ and $M\le H\times K$ one defines the subgroup 
\begin{equation*}
  L*M\le G\times K
\end{equation*}
as the set of all pairs $(g,k)\in G\times K$ for which there exists some $h\in H$ 
such that $(g,h)\in L$ and $(h,k)\in M$. Viewing $L$ as a relation between $G$ 
and $H$, and $M$ as a relation between $H$ and $K$, the subgroup $L*M$ is the 
composition of these two relations. Note that
\begin{equation*}
  (L*M)^\circ =M^\circ*L^\circ
\end{equation*}
as subgroups of $K\times G$. We emphasize that, in general, the double Burnside ring $B(G,G)$ is 
not commutative, as can be easily seen from the following proposition. 
\end{nothing}

\begin{proposition}[cf.~{\rm \cite[2.3.24]{BoucSLN}}]\mylabel{prop Mackey formula}
For $L\le G\times H$ and $M\le H\times K$ one has
\begin{equation*}
  [(G\times H)/L]\cdot_H[(H\times K)/M] = 
  \sum_{h \in [p_2(L)\backslash H / p_1(M)]}
    [(G\times K)/ (L*\lexp{(h,1)}{M})] \in B(G,K)\,,
\end{equation*}
where $[p_2(L)\backslash H / p_1(M)]$ denotes a set of double coset representatives.
\end{proposition}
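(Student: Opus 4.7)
The plan is to decompose the $(G,K)$-biset $X:=((G\times H)/L)\times_H((H\times K)/M)$ directly, by exhibiting a distinguished element in each $(G,K)$-orbit and identifying its point stabilizer. For every $h\in H$, set
$$\xi_h:=L\times_H(h,1)M\in X.$$
I would establish three claims: (i) the $(G,K)$-orbits of the $\xi_h$, as $h$ ranges over $H$, cover $X$; (ii) $\xi_{h_1}$ and $\xi_{h_2}$ lie in the same $(G,K)$-orbit if and only if $p_2(L)h_1p_1(M)=p_2(L)h_2p_1(M)$; (iii) the biset orbit of $\xi_h$ is isomorphic to $(G\times K)/(L*\lexp{(h,1)}{M})$. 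The formula then follows by summing over a set of double coset representatives.

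For (i) I would take an arbitrary element $(g_1,h_1)L\times_H(h_2,k_1)M$ and act by $h_1^{-1}\in H$ on the middle, using the right $H$-action $xh=(1,h^{-1})x$ on $(G\times H)/L$ and the left $H$-action $hy=(h,1)y$ on $(H\times K)/M$, to rewrite it as $(g_1,1)L\times_H(h_1^{-1}h_2,k_1)M=g_1\cdot\xi_{h_1^{-1}h_2}\cdot k_1$. For (ii) I would translate the relation $g\cdot\xi_{h_1}\cdot k^{-1}=\xi_{h_2}$ into the existence of some $h'\in H$ with $(g,h')\in L$ and $(h_2^{-1}h'h_1,k^{-1})\in M$; this forces $h'\in p_2(L)$ and $h_2^{-1}h'h_1\in p_1(M)$, equivalently $h_2\in p_2(L)h_1p_1(M)$, and conversely such a factorization lets one reconstruct suitable $g,h',k$.

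For (iii) I would compute the biset stabilizer of $\xi_h$ by the same unwinding: $(g,k)\in\stab_{G\times K}(\xi_h)$ iff there is $h''\in H$ with $(g,h'')\in L$ and $(h^{-1}h''h,k^{-1})\in M$. Using the identity $(a,b)\in\lexp{(h,1)}{M}\Leftrightarrow(h^{-1}ah,b)\in M$, this is equivalent to $(g,k^{-1})\in L*\lexp{(h,1)}{M}$. Under the convention $(g,k)\cdot x=gxk^{-1}$ that converts a left $G\times K$-set to a $(G,K)$-biset, the orbit of $\xi_h$ is therefore the transitive biset $(G\times K)/(L*\lexp{(h,1)}{M})$, as required.

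The main obstacle is the bookkeeping of the several convention switches — between left $G\times H$-set and $(G,H)$-biset, and between the left and right actions of $H$ on the two factors — both in the reduction to standard form in (i)--(ii) and in the stabilizer computation in (iii). Once these are tracked consistently, each step reduces to an elementary unwinding of definitions, with the final decomposition taking place in the group $B(G,K)$.
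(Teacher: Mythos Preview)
The paper does not supply its own proof of this proposition; it merely cites Bouc \cite[2.3.24]{BoucSLN}. Your orbit--stabilizer approach is the standard one and is correct in outline: exhibit the elements $\xi_h$, show they hit every $(G,K)$-orbit, determine when two of them are conjugate, and compute the stabilizer.

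There are, however, a few small sign slips in your bookkeeping that you should fix before calling this a proof. In claim~(i), the element $(g_1,1)L\times_H(h_1^{-1}h_2,k_1)M$ equals $g_1\cdot\xi_{h_1^{-1}h_2}\cdot k_1^{-1}$, not $\cdot\,k_1$, since the right $K$-action on $(H\times K)/M$ is $y\cdot k=(1,k^{-1})y$. In claims~(ii) and (iii), the condition coming from the second factor is $(h_2^{-1}h'h_1,k)\in M$ (respectively $(h^{-1}h''h,k)\in M$), with $k$ rather than $k^{-1}$: starting from $g\cdot\xi_{h_1}\cdot k^{-1}=\xi_{h_2}$ one gets $(h_1,k)M$ on the left, and equality with $(h_2,1)M$ gives $(h_2^{-1}h'h_1,k)\in M$. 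Consequently the stabilizer computation in (iii) yields directly $(g,k)\in L*\lexp{(h,1)}{M}$, with no extra inversion needed. These are exactly the convention-tracking issues you flagged; once corrected, every step goes through and the argument is complete.
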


\begin{nothing}\mylabel{markhom}
{\sl Classical ghost group and mark homomorphism.}\quad
Recall from \cite[Proposition~80.12]{CR} that for every subgroup 
$L\le G\times H$, one has a group homomorphism $\Phi_L\colon B(G,H)\to \ZZ$, 
$[X]\mapsto |X^L|$, with the following properties: if $L$ and $L'$ are 
conjugate subgroups of $G\times H$ then $\Phi_L=\Phi_{L'}$, and if $\mathcal{L}$ 
denotes a transversal of the conjugacy classes of subgroups of $G\times H$ then the map
\begin{equation*}
  \Phi=(\Phi_L)_{L\in\mathcal{L}}\colon B(G,H)\to\prod_{L\in\mathcal{L}} \ZZ\,, \quad
  [X]\mapsto (|X^L|)_{L\in\mathcal{L}}\,,
\end{equation*}
is an injective group homomorphism with finite cokernel. It 
is called the {\it classical mark homomorphism}, and its codomain is 
called the {\it classical ghost group} of $B(G,H)$. One of the goals of 
this paper is to construct ghost groups and mark homomorphisms for 
$B^{\trl}(G,H)$ that naturally come with bilinear maps that correspond, under the mark homomorphism,
to the tensor product of bisets. 

For any commutative ring $R$, 
the map $\Phi$ induces an $R$-module homomorphism 
\begin{equation}\label{equ markhom R}
\Phi\colon  RB(G,H)\to \prod_{L\in\mathcal{L}} R,
\end{equation}
where $RB(G,H):=R\otimes_{\ZZ}B(G,H)$ will often be identified with the 
free $R$-module with basis $[G\times H/L]$, $L\in \mathcal{L}$. 
If $|G\times H|$ is invertible in $R$ then (\ref{equ markhom R}) 
is an $R$-module isomorphism. Moreover, if $R$ is a field of characteristic 0 then
we can view $B(G,H)$ as a subgroup of $RB(G,H)$.
\end{nothing}

\begin{nothing}\mylabel{noth defi Bs}
{\sl The group $B^{\calS}(G,H)$.}\quad
For a set $\mathcal{S}$ of subgroups of $G\times H$, we define $B^{\mathcal{S}}(G,H)$ 
as the subgroup of $B(G,H)$ spanned by the standard basis elements 
$[G\times H/L]$ with $L\in\mathcal{S}$. In the case that 
$\mathcal{S}=\trl_{G,H}$ (respectively, $\mathcal{S}=\Delta_{G,H}$) we also use 
the notation $B^{\trl}(G,H)$ (respectively, $B^\Delta(G,H)$). We call $B^{\trl}(G,H)$ 
(respectively, $B^\Delta(G,H)$) the {\em left-free double Burnside group} 
(respectively, {\em bifree double Burnside group}) of $G$ and $H$. Proposition~\ref{prop bifree stab} 
justifies the terminology. Clearly one has $B^\Delta(G,H)\subseteq B^{\trl}(G,H)$.
\end{nothing}

\begin{hypothesis}\mylabel{hyp S1}
Assume that we are given a class $\mathcal{D}$ of finite groups and that for any 
two groups $G$ and $H$ belonging to $\mathcal{D}$ we are given a subset 
$\mathcal{S}_{G,H}$ of subgroups of $G\times H$. We often write $\calS$ for the collection of sets $\calS_{G,H}$, $G,H\in\calD$.
For any groups $G$ and $H$ in $\mathcal{D}$, we define the $R$-module 
$RB^{\mathcal{S}_{G,H}}(G,H)$, or for short $RB^{\calS}(G,H)$, as in \ref{noth defi Bs} above.
In subsequent results we will put further restrictions on the sets $\mathcal{S}_{G,H}$. 
For this, we say that $(\mathcal{D},\calS)$ satisfies Condition (I) if 
the following hold:

(i) For all $G,H\in\mathcal{D}$, the set $\mathcal{S}_{G,H}$ is closed 
under $G\times H$-conjugation.

(ii) For all $G,H\in\mathcal{D}$, the set $\calS_{G,H}$ is closed under taking subgroups.

(iii) For all $G,H,K\in\mathcal{D}$ and all $L\in\mathcal{S}_{G,H}$ and 
$M\in\mathcal{S}_{H,K}$ one has $L*M\in\mathcal{S}_{G,K}$.

(iv) For all $G\in\mathcal{D}$ one has $\Delta(G)\in\mathcal{S}_{G,G}$.

\noindent
Moreover, we say that $(\mathcal{D},\calS)$ satisfies Condition (II) if

(v) for all $G,H\in\mathcal{D}$ one has $(\calS_{G,H})^\circ = \calS_{H,G}$.
\end{hypothesis}

\begin{proposition}\mylabel{prop bifree constructions}
Let $\mathcal{D}$ and $\calS_{G,H}$  (for $G,H\in \calD$) be as in Hypothesis~\ref{hyp S1},
and assume further that $(\mathcal{D},\calS)$ satisfies
Condition (I) in Hypothesis~\ref{hyp S1}. Let $G,H,K\in\calD$.

{\rm (a)} The bilinear map $-\cdot_H -\colon B(G,H)\times B(H,K)\to B(G,K)$ restricts to a bilinear map
\begin{equation*}
  - \cdot_H - \colon B^{\mathcal{S}}(G,H) \times B^{\mathcal{S}}(H,K) \to
  B^{\mathcal{S}}(G,K)\,.
\end{equation*}
In particular, $B^{\mathcal{S}}(G,G)$ is a unitary subring of $B(G,G)$.

{\rm (b)} Assume that $(\mathcal{D},\calS)$ additionally satisfies
Condition (II) in Hypothesis~\ref{hyp S1}. Then the group isomorphism 
$-^\circ\colon B(G,H)\to B(H,G)$ restricts to an isomorphism 
\begin{equation*}
  B^{\mathcal{S}}(G,H) \to B^{\mathcal{S}}(H,G)\,.
\end{equation*}

{\rm (c)} If $\mathcal{L}$ is a transversal for the conjugacy classes 
of subgroups of $G\times H$ and if $\mathcal{L}^{\mathcal{S}}:=\mathcal{L}\cap\mathcal{S}_{G,H}$ 
then the classical mark homomorphism $\Phi\colon B(G,H)\to \prod_{L\in\mathcal{L}}\ZZ$ 
restricts to a group monomorphism $B^{\mathcal{S}}(G,H)\to\prod_{L\in\mathcal{L}^{\mathcal{S}}}\ZZ$.
Its cokernel is a finite group of order $\prod_{L\in\calL^{\calS}}[N_{G\times H}(L):L]$. In particular, 
if $|G\times H|\in R^\times$ then the latter homomorphism induces an
$R$-module isomorphism
\begin{equation*}
  RB^{\mathcal{S}}(G,H)\to\prod_{L\in\mathcal{L}^{\mathcal{S}}}R\,.
\end{equation*}
\end{proposition}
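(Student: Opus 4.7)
The plan is to prove the three parts sequentially, leveraging in each case a specific tool already in the excerpt: the Mackey formula (Proposition~\ref{prop Mackey formula}) for (a), the explicit description of the opposite biset from \ref{noth opposite biset} for (b), and the classical mark homomorphism for (c). Each part reduces to checking a closure property of $\calS$ against the corresponding construction, so the real content lies in matching the right axiom of Hypothesis~\ref{hyp S1} to the right construction.

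For (a), I would test the product on the standard basis $\{[G\times H/L] : L\in\calL^{\calS}\}$ of $B^{\calS}(G,H)$ (and similarly for $B^{\calS}(H,K)$). Proposition~\ref{prop Mackey formula} writes $[G\times H/L]\cdot_H[H\times K/M]$ as a $\ZZ$-linear combination of bisets $[(G\times K)/(L*\lexp{(h,1)}{M})]$. Condition (I)(i) keeps $\lexp{(h,1)}{M}$ inside $\calS_{H,K}$, and then Condition (I)(iii) places $L*\lexp{(h,1)}{M}$ inside $\calS_{G,K}$, so the bilinear map restricts. For the unitary subring statement, Condition (I)(iv) delivers $\Delta(G)\in\calS_{G,G}$, so the identity $[G\times G/\Delta(G)]$ of $B(G,G)$ lies in $B^{\calS}(G,G)$. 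For (b), the isomorphism at the end of \ref{noth opposite biset} identifies the opposite of $[G\times H/L]$ with $[H\times G/L^\circ]$, and Condition (II) says exactly that $L\mapsto L^\circ$ bijects $\calS_{G,H}$ onto $\calS_{H,G}$, so $-^\circ$ swaps the standard bases and restricts to a group isomorphism $B^{\calS}(G,H)\to B^{\calS}(H,G)$.

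For (c), the key step is to show that $\Phi_M$ vanishes on $B^{\calS}(G,H)$ whenever $M\notin\calS_{G,H}$. Indeed, $\Phi_M([G\times H/L])=|(G\times H/L)^M|$ is positive only when $M$ is $G\times H$-conjugate to a subgroup of some $L\in\calS_{G,H}$; but in that case Conditions (I)(i) and (I)(ii) force $M$ itself into $\calS_{G,H}$, contradicting the choice of $M$. Hence $\Phi$ restricts to a homomorphism $B^{\calS}(G,H)\to\prod_{L\in\calL^{\calS}}\ZZ$. Ordering $\calL^{\calS}$ by non-decreasing $|L|$, the transformation matrix $(\Phi_{L'}([G\times H/L]))_{L,L'}$ is upper triangular, since $\Phi_{L'}([G\times H/L])=0$ unless $L'$ is subconjugate to $L$, in particular unless $|L'|\le|L|$; and its diagonal entries are the classical counts $|(G\times H/L)^L|=[N_{G\times H}(L):L]$. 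Non-vanishing of these entries gives injectivity over $\ZZ$ and exhibits the cokernel as a finite abelian group of order $\prod_{L\in\calL^{\calS}}[N_{G\times H}(L):L]$. Each such index divides $|G\times H|$, so when $|G\times H|\in R^\times$ the diagonal entries become units in $R$, making the triangular matrix invertible over $R$ and yielding the claimed $R$-module isomorphism after tensoring. The main obstacle will be presenting the triangularity argument in (c) cleanly; the remaining parts are essentially direct unpacking of definitions combined with the axioms in Hypothesis~\ref{hyp S1}.
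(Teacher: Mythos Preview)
Your proposal is correct and follows essentially the same approach as the paper: the Mackey formula with axioms (i), (iii), (iv) for (a); axiom (v) for (b); and the upper-triangular table-of-marks argument with diagonal entries $[N_{G\times H}(L):L]$ for (c). Your explicit vanishing step in (c)---that $\Phi_M$ kills $B^{\calS}(G,H)$ for $M\notin\calS_{G,H}$ by closure under subgroups and conjugation---is exactly what justifies the paper's claim that deleting the non-$\calS$ rows and columns from the full table of marks yields the representing matrix of the restricted map.
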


\begin{proof}
Part (a) follows immediately from the explicit formula in 
Proposition~\ref{prop Mackey formula} and Hypothesis~\ref{hyp S1}(i), (iii), and (iv). 
Part~(b) is immediate from Hypothesis~\ref{hyp S1}(v). For Part~(c), 
note that the representing matrix (also called the table of marks) of 
$\Phi\colon B(G,H)\to \prod_{L\in\calL} \ZZ$ with respect to the standard 
basis elements in appropriate order is an upper triangular square matrix,
with rows and columns both indexed by $\calL$ and with diagonal entries 
$[N_{G\times H}(L):L]$, $L\in\calL$. This follows immediately from the fact 
that if $L\in\calL$ is not conjugate to a subgroup of $L'\in\calL$ then 
$(G\times H/L')^L$ is empty. Omitting rows and columns from this matrix 
indexed by elements $L\in\calL$ that do not belong to $\calS_{G,H}$
results again in an upper triangular square matrix 
whose diagonal entries are
units in $R$, provided that $|G\times H|$ is a unit in $R$. 
This matrix is the representing matrix of the restricted morphism 
considered in (c). This completes the proof.
\end{proof}

\begin{nothing}
Note that (i)--(iv) in 
Hypothesis~\ref{hyp S1} are satisfied for $\mathcal{S}_{G,H}=\trl_{G,H}$,
and that (i)--(v) are satisfied for $\mathcal{S}_{G,H}=\Delta_{G,H}$. 
Therefore, $B^{\trl}(G,G)$ and $B^\Delta(G,G)$ are unitary subrings of $B(G,G)$, 
which we call the {\em left-free} and the {\em bifree} {\em double Burnside ring}
of $G$, respectively.
\end{nothing}

\begin{nothing}\mylabel{noth biset functors}
{\sl Biset functors.}\quad Let $\calD$ be a set of finite groups, 
and for each pair $(G,H)$ of groups in $\calD$ let $\calS_{G,H}$ be a set of 
subgroups of $G\times H$ satisfying Condition (I) in Hypothesis~\ref{hyp S1}. 
Moreover, let $R$ be a commutative ring.

\smallskip
(a) Following Bouc, cf.~\cite{BoucJA}, we can define the category 
$\calC^{\calD,\calS}$ whose objects are the groups in $\calD$, whose morphisms 
are given by $\Hom_{\calC^{\calD,\calS}}(H,G)=B^{\calS}(G,H)$ for $G,H\in\calD$, 
and whose composition is induced by the tensor product construction of bisets. 
This is an additive category. In this setting, a {\em biset functor} over $R$ 
is an additive functor from $\calC^{\calD,\calS}$ to the category of left $R$-modules. 
Together with natural transformations as morphisms, the biset functors 
form an abelian category $\Func^{\calD,\calS}_R$.

(b) The category $\Func^{\calD,\calS}_R$ is isomorphic to a module category. 
This construction works for any additive category in place of $\calC^{\calD,\calS}$ 
and goes back to Gabriel in \cite[Chapter II]{Gabriel}, see also \cite[\S 2]{W}. 
Define the $R$-module
\begin{equation*}
  A^{\calD,\calS}_R:=\bigoplus_{G,H\in\calD} RB^{\calS}(G,H)
\end{equation*}
and define a multiplication on $A^{\calD,\calS}_R$ as follows: if 
$a\in RB^{\calS}(G,H)$ and $b\in RB^{\calS}(H',K)$ then set $a\cdot b:=a\cdotH b$ 
if $H=H'$, and $a\cdot b:=0$ if $H\neq H'$. This way, $A^{\calD,\calS}_R$ is an 
associative ring. If $R=\ZZ$ then we also use the notation $A^{\calD,\calS}$. For 
$G\in\calD$, we set $e_G:=[G\times G/\Delta(G)]$, the identity element in $RB^{\calS}(G,G)$. 
Then the elements $e_G$, $G\in\calD$, form a set of mutually orthogonal 
idempotents in $A^{\calD,\calS}_R$. The ring $A^{\calD,\calS}_R$ has an identity 
if and only if $\calD$ is finite; in this case the identity is equal to 
$\sum_{G\in\calD} e_G$. Now consider the category $\lModstar{A^{\calD,\calS}_R}$ 
of all left $A^{\calD,\calS}_R$-modules $M$ with the property that 
$M=\sum_{G\in\calD} (e_GM)$. This is an abelian category and the obvious 
functor, which sends $F\in\Func^{\calD,\calS}_R$ to the module $M:=\bigoplus_{G\in\calD}F(G)$, 
defines a category equivalence. An inverse is given by mapping an 
$A^{\calD,\calS}_R$-module $M$ to the obvious functor $F$, defined on 
objects by $F(G):=e_GM$ for $G\in\calD$. 
\end{nothing}


\section{Fixed points of products of left-free bisets}\mylabel{section fixed points}

Throughout this section, 
$G$, $H$, and $K$ denote finite groups. For a $(G,H)$-biset $X$, an 
$(H,K)$-biset $Y$, and a subgroup $N\in\trl_{G,K}$, we will give an explicit 
description of the fixed point set $(X\times_HY)^N$  in terms of the fixed 
point sets $X^L$ and $Y^M$ with $L\in \trl_{G,H}$ and $M\in\trl_{H,K}$. 

For any two groups $U$ and $V$ we will abbreviate by $E(U,V)$ the set 
of epimorphisms $\alpha\colon V \to U$, and by $I(U,V)$ the set of 
isomorphisms $\alpha\colon V \myiso U$.

\begin{nothing}\mylabel{noth prod map}
Let $U\le G$, $V\le H$, $W\le K$ and assume that $\alpha\in E(U,V)$ 
and $\beta\in E(V,W)$. We will consider the subgroups 
$\trl(U,\alpha,V)\le G\times H$ and $\trl(V,\beta,W)\le H\times K$. 
Note that $E(U,V)$ (respectively, $I(U,V)$) is a left-free (respectively, bifree) 
$(\Aut_G(U),\Aut_H(V))$-biset and that $E(V,W)$ (respectively, $I(V,W)$) 
is a left-free (respectively, bifree) $(\Aut_H(V),\Aut_K(W))$-biset by 
composition. Now let $X$ be a left-free $(G,H)$-biset and let 
$Y$ be a left-free $(H,K)$-biset. Note that $X^{\trl(U,\alpha,V)}$ is a 
left-free $(C_G(U), C_H(V))$-biset and that $Y^{\trl(V,\beta,W)}$ is a 
left-free $(C_H(V),C_K(W))$-biset, by restriction of the structures of 
$X$ and $Y$. In the case that $X$ and $Y$ are bifree and $\alpha$ and 
$\beta$ are isomorphisms, these bisets are bifree. Also note that 
\begin{equation*}
  x\in X^{\trl(U,\alpha,V)} \iff xv=\alpha(v)x \text{ for all $v\in V$.}
\end{equation*}
If $x\in X^{\trl(U,\alpha,V)}$ and $y\in Y^{\trl(V,\beta,W)}$ then 
$x\times_H y\in (X\times_H Y)^{\trl(U,\alpha\beta,W)}$. In fact, for all 
$w\in W$ one has
\begin{equation*}
  x\times_H yw = x\times_H \beta(w)y = x\beta(w)\times _H y = \alpha(\beta(w))x \times_H y\,.
\end{equation*}
Since also $xh\times_H y= x\times_H hy$ for all $h\in C_H(V)$, we  obtain a well-defined map
\begin{align*}
  \mubar\colon X^{\trl(U,\alpha,V)}\times_{C_H(V)} Y^{\trl(V,\beta,W)} &\to
  (X\times_H Y)^{\trl(U,\alpha\beta,W)}\\
  x\times_{C_H(V)} y &\mapsto x\times_H y
\end{align*}
between left-free $(C_G(U),C_K(W))$-bisets. Moreover, the map $\mubar$ 
is injective. In fact, if $x,x'\in X^{\trl(U,\alpha,V)}$ and $y,y'\in Y^{\trl(V,\beta,W)}$ 
satisfy $x\times_H y=x'\times_H y'$ then there exists some $h\in H$ such that 
$x'=xh^{-1}$ and $y'=hy$. The 
fixed point properties of $y$ and $y'$ imply
\begin{equation*}
  h\beta(w)y=hyw=y'w=\beta(w)y'=\beta(w)hy\,,
\end{equation*}
and since $Y$ is left-free, we obtain $c_h\beta=\beta$ and then $h\in C_H(V)$. 
Thus, $x'\times_{C_H(V)} y'=x\times_{C_H(V)} y$, and the injectivity of $\mubar$ is proved.
\end{nothing}

\begin{nothing}\mylabel{noth Sigma Gamma}
(a) For $U\le G$, $W\le K$, and $\gamma\in E(U,W)$ we denote by 
$\Gamma_H(U,\gamma,W)$ the set of all triples $(\alpha,V,\beta)$ with 
$V\le H$, $\alpha\in E(U,V)$ and $\beta\in E(V,W)$ such that 
$\alpha\beta=\gamma$. In other words, $\Gamma_H(U,\gamma,W)$ consists of all 
factorizations of $\gamma$ as two epimorphisms via subgroups of $H$.

(b) Note that $H$ acts on $\Gamma_H(U,\gamma,W)$ by
\begin{equation*}
  h(\alpha,V,\beta):= (\alpha c_h^{-1}, \lexp{h}{V}, c_h\beta)
\end{equation*}
and that the stabilizer of $(\alpha,V,\beta)$ is equal to $C_H(V)$.

Note also that if $(\alpha,V,\beta)$ and $(\alphatilde,\Vtilde,\betatilde)$ 
lie in the same $H$-orbit of $\Gamma_H(U,\gamma,W)$ and if $h\in H$ satisfies 
$(\alphatilde,\Vtilde,\betatilde)=h(\alpha,V,\beta)=(\alpha c_h^{-1},\lexp{h}{V},c_h\beta)$ 
then one has an isomorphism of $(C_G(U),C_K(W))$-bisets,
\begin{equation*}
  \varphi_h\colon X^{\trl(U,\alpha,V)}\times Y^{\trl(V,\beta,W)} \myiso 
  X^{\trl(U,\alphatilde,\Vtilde)}\times Y^{\trl(\Vtilde,\betatilde,W)}\,,\quad (x,y)\mapsto (xh^{-1},hy)\,,
\end{equation*}
which induces an isomorphism
\begin{align*}
  \varphibar_h\colon X^{\trl(U,\alpha,V)}\times_{C_H(V)} Y^{\trl(V,\beta,W)} 
  & \myiso X^{\trl(U,\alphatilde,\Vtilde)}\times_{C_H(\Vtilde)} Y^{\trl(\Vtilde,\betatilde,W)}\,,\\
  x\times_{C_H(V)}y & \mapsto xh^{-1}\times_{C_H(\Vtilde)}hy\,,
\end{align*}
of $(C_G(U),C_K(W))$-bisets such that $\mubar\circ\varphibar_h=\mubar$. 
The isomorphism $\varphibar_h$ does not depend on the choice of the 
element $h\in H$. In fact, if also $h'(\alpha,V,\beta)=(\alphatilde,\Vtilde,\betatilde)$ 
then $h'=ch$ for some $c\in C_H(\Vtilde)$.
\end{nothing}

The following result will be crucial for studying the ring 
structures of $B^\trl(G,G)$ and $B^\Delta(G,G)$.

\begin{theorem}\mylabel{thm fixed points 1}
Let $U\le G$ and $W\le K$, and let $\gamma\in E(U,W)$. Let 
$\Gamma_H(U,\gamma,W)$ be defined as in \ref{noth Sigma Gamma} and 
let $\tilde{\Gamma}_H(U,\gamma,W)\subseteq \Gamma_H(U,\gamma,W)$ be a set 
of representatives of the  $H$-orbits of $\Gamma_H(U,\gamma,W)$. Then 
the maps $\mubar$ from \ref{noth prod map} induce an isomorphism of $(C_G(U),C_K(W))$-bisets 
\begin{equation*}
  \coprod_{(\alpha,V,\beta)\in\tilde{\Gamma}_H(U,\gamma,W)} 
  X^{\trl(U,\alpha,V)} \times_{C_H(V)} Y^{\trl(V,\beta,W)} \liso
  (X\times_H Y)^{\trl(U,\gamma,W)}\,.
\end{equation*}
Moreover, if $\tilde{\Sigma}_H\subseteq \Sigma_H$ is a transversal of 
the $H$-conjugacy classes of subgroups of $H$ then
\begin{align*}
  |(X\times_H Y)^{\trl(U,\gamma,W)}| 
  & = \sum_{V\le H} |H|^{-1} 
  \sum_{\substack{(\alpha,\beta)\in E(U,V)\times E(V,W) \\ \alpha\beta=\gamma}}
  |X^{\trl(U,\alpha,V)}|\cdot|Y^{\trl(V,\beta,W)}| \\
  & =  \sum_{V\in\tilde{\Sigma}_H} |N_H(V)|^{-1} 
  \sum_{\substack{(\alpha,\beta)\in E(U,V)\times E(V,W) \\ \alpha\beta=\gamma}}
  |X^{\trl(U,\alpha,V)}|\cdot|Y^{\trl(V,\beta,W)}|\,.
\end{align*}
\end{theorem}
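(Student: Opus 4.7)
The plan is to prove the bijection by establishing injectivity, surjectivity, and $(C_G(U),C_K(W))$-equivariance, and then to derive the cardinality formulas. On each summand indexed by $(\alpha,V,\beta)\in\tilde{\Gamma}_H(U,\gamma,W)$, the discussion in~\ref{noth prod map} already supplies a well-defined injective $(C_G(U),C_K(W))$-biset morphism $\mubar$, and~\ref{noth Sigma Gamma} shows that different $H$-orbit representatives yield isomorphic sources (via $\varphibar_h$) with the same image in $(X\times_H Y)^{\trl(U,\gamma,W)}$; so the disjoint sum of the $\mubar$ is a well-defined morphism of $(C_G(U),C_K(W))$-bisets.

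The heart of the argument is surjectivity. Given $z\in (X\times_H Y)^{\trl(U,\gamma,W)}$, I would pick any lift $z=x\times_H y$ with $(x,y)\in X\times Y$. The fixed-point condition reads $\gamma(w)x\times_H y=x\times_H yw$ for every $w\in W$, so there exists $h_w\in H$ with $\gamma(w)x=xh_w$ and $yw=h_wy$. Because $Y$ is left-free, $h_w$ is uniquely determined by $yw=h_wy$, so the assignment $\beta(w):=h_w$ is a well-defined map $W\to H$; comparing $y(ww')=\beta(ww')y$ with $(yw)w'=\beta(w)\beta(w')y$ and invoking left-freeness of $Y$ again shows $\beta$ is a homomorphism. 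Set $V:=\beta(W)\le H$. If $\beta(w)=1$ then $\gamma(w)x=x$, and left-freeness of $X$ forces $\gamma(w)=1$; hence $\ker\beta\subseteq\ker\gamma$, and $\alpha(\beta(w)):=\gamma(w)$ defines an epimorphism $\alpha\colon V\to U$ with $\alpha\beta=\gamma$. The identities $\alpha(v)x=\gamma(w)x=x\beta(w)=xv$ and $\beta(w)y=yw$ then place $x\in X^{\trl(U,\alpha,V)}$ and $y\in Y^{\trl(V,\beta,W)}$, so $x\times_{C_H(V)}y$ is a preimage of $z$ in the summand indexed by (the class of) $(\alpha,V,\beta)$.

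For injectivity across summands, suppose another preimage $(x',y')$ yields data $(\alpha',V',\beta')$; picking $h\in H$ with $x'=xh^{-1}$ and $y'=hy$, the identity $\beta'(w)y'=y'w$ becomes $\beta'(w)hy=h\beta(w)y$, so left-freeness of $Y$ gives $\beta'=c_h\beta$. In particular $V'=\lexp{h}{V}$, and $\alpha'=\alpha c_h^{-1}$ is then forced by $\alpha'\beta'=\gamma=\alpha\beta$ together with surjectivity of $\beta$. Hence $(\alpha',V',\beta')$ is $H$-conjugate to $(\alpha,V,\beta)$, so distinct orbit representatives produce disjoint images; combined with the injectivity of $\mubar$ on each summand from~\ref{noth prod map}, this yields overall injectivity. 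Equivariance for $(C_G(U),C_K(W))$ is immediate from the definition of $\mubar$.

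For the cardinality formulas I would observe that $C_H(V)$ acts freely on $X^{\trl(U,\alpha,V)}\times Y^{\trl(V,\beta,W)}$ via $h\cdot(x,y)=(xh^{-1},hy)$: indeed $hy=y$ implies $h=1$ by left-freeness of $Y$. So each fiber product has cardinality $|X^{\trl(U,\alpha,V)}|\cdot|Y^{\trl(V,\beta,W)}|/|C_H(V)|$; since the $H$-orbit of $(\alpha,V,\beta)$ has size $|H|/|C_H(V)|$ (the stabilizer being $C_H(V)$, by~\ref{noth Sigma Gamma}), summing over an orbit transversal and rewriting as a sum over all triples yields the first formula, with coefficient $|H|^{-1}$. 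Grouping $V$ by $H$-conjugacy and noting that the correspondences $\alpha\mapsto\alpha c_h^{-1}$, $\beta\mapsto c_h\beta$, together with the conjugation bijection $x\mapsto xh^{-1}$ on fixed-point sets, show the inner sums are constant on $H$-conjugacy classes of $V$; the factor $|H:N_H(V)|$ counting conjugates then converts $|H|^{-1}$ into $|N_H(V)|^{-1}$, giving the second formula. The main obstacle is the surjectivity step, where one must exploit left-freeness of $Y$ to construct $\beta$ and left-freeness of $X$ to descend $\gamma$ through $\beta$ to $\alpha$.
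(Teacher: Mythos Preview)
Your proposal is correct and follows essentially the same route as the paper: surjectivity by constructing $\beta$ from the uniquely determined elements $h_w$ (using left-freeness of $Y$) and then descending $\gamma$ to $\alpha$ (using left-freeness of $X$), injectivity across summands by showing two such triples are $H$-conjugate, and the counting formulas via the free $C_H(V)$-action and orbit sizes. The only cosmetic difference is that you phrase the well-definedness of $\alpha$ as $\ker\beta\subseteq\ker\gamma$, whereas the paper checks directly that $\gamma(w)=\gamma(w')$ when $\beta(w)=\beta(w')$; and in your surjectivity step you should make explicit that you then replace $(\alpha,V,\beta)$ by its chosen representative in $\tilde\Gamma_H(U,\gamma,W)$ using some $\varphibar_h$, rather than leaving it as ``the class of''.
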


\begin{proof}
We first show that the map in the theorem is surjective. So let $x\in X$, 
$y\in Y$ be such that $x\times_Hy\in(X\times_H Y)^{\trl(U,\gamma,W)}$. Then 
$x\times_H yw = \gamma(w)x\times_H y$ for all $w\in W$. This implies that,
for every $w\in W$, there exists an element $h_w\in H$ such that
\begin{equation*}
  xh_w = \gamma(w)x\quad\text{and}\quad h_w y = yw\,.
\end{equation*}
Since $Y$ is left-free, $h_w$ is uniquely determined by $w$. Thus, we obtain a 
function $\betatilde\colon W\to H$ such that\marginpar{eqn betatilde}
\begin{equation}\label{eqn betatilde}
  x\betatilde(w)=\gamma(w) x \quad\text{and}\quad \betatilde(w) y = yw
\end{equation}
for all $w\in W$. Moreover, for $w,w'\in W$ we have
\begin{equation*}
  \betatilde(ww')y = yww' =\betatilde(w)yw' =\betatilde(w)\betatilde(w') y\,,
\end{equation*}
and since $Y$ is left-free, we see that $\betatilde$ is a group homomorphism. 
We set $\Vtilde:=\betatilde(W)$ and have $\betatilde\in E(\Vtilde,W)$. By the 
second equation in (\ref{eqn betatilde}), we have $y\in Y^{\trl(\Vtilde,\betatilde,W)}$. 
Next we define a function $\alphatilde\colon\Vtilde\to U$ as follows. For 
$\vtilde\in\Vtilde$, choose $w\in W$ with $\betatilde(w)=\vtilde$, and set 
$\alphatilde(\vtilde):=\gamma(w)$. This is well defined. For if also $w'\in W$ 
satisfies $\betatilde(w')=\vtilde$ then, by the first equation in (\ref{eqn betatilde}),
\begin{equation*}
  \gamma(w)x=x\betatilde(w) = x\betatilde(w')=\gamma(w')x\,.
\end{equation*}
Since $X$ is left-free, we obtain $\gamma(w)=\gamma(w')$. By construction, 
$\alphatilde(\Vtilde)=\gamma(W)=U$ and $\gamma=\alphatilde\betatilde$. 
Since $\gamma$ and $\betatilde$ are surjective homomorphisms, also $\alphatilde$ 
is a surjective homomorphism. Now, the first equation in (\ref{eqn betatilde}) 
implies that $x\in X^{\trl(U,\alphatilde,\Vtilde)}$. Since $(\alphatilde,\Vtilde,\betatilde)$ 
is an element in $\Gamma_H(U,\gamma,W)$, there exist $h\in H$ and 
$(\alpha,V,\beta)\in\tilde{\Gamma}_H(U,\gamma,W)$ such that 
$(\alpha,V,\beta)=h(\alphatilde,\Vtilde,\betatilde)$. This implies that 
$xh^{-1}\in X^{\trl(U,\alpha,V)}$ and $hy\in Y^{\trl(V,\beta,W)}$. Thus, $x\times_H y= xh^{-1} \times_H hy $ 
lies in the image of the map in the theorem.

\smallskip
Next we show that the map in the theorem is injective. Let 
$(\alpha,V,\beta),(\alphatilde,\Vtilde,\betatilde)\in \tilde{\Gamma}_H(U,\gamma,W)$ 
and let $x\in X^{\trl(U,\alpha,V)}$, $y\in Y^{\trl(V,\beta,W)}$, $\xtilde\in X^{\trl(U,\alphatilde,\Vtilde)}$, 
and $\ytilde\in Y^{\trl(\Vtilde,\betatilde,W)}$ be such that $x\times_H y=\xtilde\times_H \ytilde$. 
By the injectivity of the map $\mubar$ in \ref{noth prod map}, it suffices to show that 
$(\alpha,V,\beta)=(\alphatilde,\Vtilde,\betatilde)$, or, equivalently, that 
$h(\alpha,V,\beta)=(\alphatilde,\Vtilde,\betatilde)$ for some $h\in H$. Now, 
since $x\times_H y= \xtilde\times_H \ytilde$, there exists $h\in H$ such that 
$\xtilde=xh^{-1}$ and $\ytilde=hy$. Moreover, for all $w\in W$, we have
\begin{equation*}
  \beta(w) y = yw\quad\text{and}\quad \betatilde(w)hy=\betatilde(w)\ytilde = \ytilde w = hyw\,.
\end{equation*}
These two equations imply $h^{-1}\betatilde(w)hy=yw=\beta(w) y$. Since $Y$ is 
left-free, we obtain $\betatilde=c_h\beta$ and $\Vtilde=\betatilde(W)=(c_h\beta)(W)=\lexp{h}{V}$. 
In order to see that $\alphatilde=\alpha c_h^{-1}$, let $\vtilde\in\Vtilde$ 
and choose $w\in W$ such that $\betatilde(w)=\vtilde$. Then
\begin{equation*}
  \alphatilde(\vtilde)= (\alphatilde\betatilde)(w)=\gamma(w) = (\alpha\beta)(w) = (\alpha c_h^{-1}\betatilde)(w)
  = (\alpha c_h^{-1})(\vtilde)\,.
\end{equation*}
This implies that $(\alphatilde,\Vtilde,\betatilde)=h(\alpha,V,\beta)$ and 
completes the proof of the injectivity of the map in the theorem.

\smallskip
Finally, we will show the equations in the theorem. First note that
\begin{equation*}
  |X^{\trl(U,\alpha,V)} \times_{C_H(V)} Y^{\trl(V,\beta,W)}| = 
  |C_H(V)|^{-1}\cdot |X^{\trl(U,\alpha,V)} \times Y^{\trl(V,\beta,W)}|\,,
\end{equation*}
since $Y$ is left-free. Next, recall from \ref{noth Sigma Gamma}(b), that 
if $(\alpha,V,\beta)$ and $(\alphatilde,\Vtilde,\betatilde)$ lie in the 
same $H$-orbit of $\Gamma_H(U,\gamma,W)$ then $X^{\trl(U,\alpha,V)} \times Y^{\trl(V,\beta,W)}$ 
and $X^{\trl(U,\alphatilde,\Vtilde)} \times Y^{\trl(\Vtilde,\betatilde,W)}$ 
are in bijective correspondence. Since the $H$-orbit of $(\alpha,V,\beta)$ 
has size $[H:C_H(V)]$, these two facts and the isomorphism in the theorem 
imply the first equation. The second equation is immediate.
\end{proof}

\begin{remark}\mylabel{rem epi to iso}
(a) Note that the left-hand side of the isomorphism in Theorem~\ref{thm fixed points 1} 
does not depend on the choice of the set $\tilde{\Gamma}_H(U,\gamma,W)$, in the 
sense that any other choice would lead to {\em canonically} isomorphic 
components, the isomorphism being given by the maps $\varphibar_h$, cf.~\ref{noth Sigma Gamma}(b).

(b) If the epimorphism $\gamma$ in Theorem~\ref{thm fixed points 1} is 
an isomorphism and if $(\alpha,V,\beta)$ is an element in $\Gamma_H(U,\gamma,W)$, 
then also $\alpha$ and $\beta$ are isomorphisms, since $\gamma=\alpha\beta$. 
In particular, $V$ is isomorphic to $U$ and $W$. Therefore, Theorem~\ref{thm fixed points 1} 
implies immediately the following result.
\end{remark}

\begin{theorem}\mylabel{thm fixed points 2}
Let $U\le G$ and $W\le K$ be isomorphic subgroups of
$G$ and $K$, respectively, and let $\gamma\in I(U,W)$ 
be an isomorphism between them. Let $\Gamma_H(U,\gamma,W)$ be defined as in 
\ref{noth Sigma Gamma} and let $\tilde{\Gamma}_H(U,\gamma,W)\subseteq \Gamma_H(U,\gamma,W)$ 
be a set of representatives of the  $H$-orbits of $\Gamma_H(U,\gamma,W)$. 
Then the maps $\mubar$ from \ref{noth prod map} induce an isomorphism of $(C_G(U),C_K(W))$-bisets 
\begin{equation*}
  \coprod_{(\alpha,V,\beta)\in\tilde{\Gamma}_H(U,\gamma,W)} 
  X^{\Delta(U,\alpha,V)} \times_{C_H(V)} Y^{\Delta(V,\beta,W)} \liso
  (X\times_H Y)^{\Delta(U,\gamma,W)}\,.
\end{equation*}
Moreover, if $\tilde{\Sigma}_H(U)\subseteq \Sigma_H(U)$ is a transversal of the 
$H$-conjugacy classes of $\Sigma_H(U)$ then
\begin{align*}
  |(X\times_H Y)^{\Delta(U,\gamma,W)}| 
  & = \sum_{V\le H} |H|^{-1} 
  \sum_{\substack{(\alpha,\beta)\in I(U,V)\times I(V,W) \\ \alpha\beta=\gamma}}
  |X^{\Delta(U,\alpha,V)}|\cdot|Y^{\Delta(V,\beta,W)}| \\
  & =  \sum_{V\in\tilde{\Sigma}_H(U)} |N_H(V)|^{-1} 
  \sum_{\substack{(\alpha,\beta)\in I(U,V)\times I(V,W) \\ \alpha\beta=\gamma}}
  |X^{\Delta(U,\alpha,V)}|\cdot|Y^{\Delta(V,\beta,W)}|\,.
\end{align*}
\end{theorem}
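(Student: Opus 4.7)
The plan is to deduce Theorem~\ref{thm fixed points 2} as a direct corollary of Theorem~\ref{thm fixed points 1}, exploiting the observation already indicated in Remark~\ref{rem epi to iso}(b): when the epimorphism $\gamma$ happens to be an isomorphism, every factorization of $\gamma$ through a subgroup of $H$ must itself consist of isomorphisms. I would begin by fixing $\gamma\in I(U,W)$ and an arbitrary triple $(\alpha,V,\beta)\in\Gamma_H(U,\gamma,W)$, and check this: since $\alpha\beta=\gamma$ is injective and $\beta\colon W\to V$ is surjective, $\beta$ is automatically injective, hence an isomorphism; consequently $\alpha=\gamma\beta^{-1}$ is the composition of two isomorphisms and is itself an isomorphism. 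In particular, $V$ is isomorphic to $U$ (and to $W$), so $V\in\Sigma_H(U)$, and the subgroups $\trl(U,\alpha,V)$, $\trl(V,\beta,W)$, $\trl(U,\gamma,W)$ of the relevant direct products coincide with their twisted-diagonal counterparts $\Delta(U,\alpha,V)$, $\Delta(V,\beta,W)$, $\Delta(U,\gamma,W)$.

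With this observation in hand, I would simply apply Theorem~\ref{thm fixed points 1} to $\gamma$ and rewrite the resulting statement using the identifications above. The coproduct decomposition becomes precisely the claimed isomorphism of $(C_G(U),C_K(W))$-bisets, because the indexing set $\tilde{\Gamma}_H(U,\gamma,W)$ and the associated maps $\mubar$ from \ref{noth prod map} are entirely unchanged — only their interpretation (now involving bifree fixed-point sets) has shifted.

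For the cardinality formula I would simply note that in the inner sums over $(\alpha,\beta)\in E(U,V)\times E(V,W)$ with $\alpha\beta=\gamma$, the constraint $\alpha\beta=\gamma\in I(U,W)$ forces $(\alpha,\beta)\in I(U,V)\times I(V,W)$, so the sums collapse to the stated ones over isomorphisms. Moreover, these sums are empty unless $V$ is isomorphic to $U$, so in the outer sum over a transversal $\tilde{\Sigma}_H$ of $H$-conjugacy classes of subgroups of $H$ only those representatives with $V\cong U$ survive; since $H$-conjugation preserves isomorphism type of subgroups, I may replace $\tilde{\Sigma}_H$ by $\tilde{\Sigma}_H(U)$ without changing the value. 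The weighting $|N_H(V)|^{-1}$ coming from Theorem~\ref{thm fixed points 1} carries over unchanged.

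Since the entire argument is essentially bookkeeping on top of Theorem~\ref{thm fixed points 1}, no step presents a genuine obstacle; the only thing that requires (brief) care is the verification that any epimorphic factorization of an isomorphism is automatically a factorization by isomorphisms, which pins down both the correct indexing of the coproduct and the restriction of the outer sum to $\tilde{\Sigma}_H(U)$.
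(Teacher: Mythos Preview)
Your proposal is correct and follows exactly the paper's approach: the paper states the result as an immediate consequence of Theorem~\ref{thm fixed points 1} via Remark~\ref{rem epi to iso}(b), using the same observation that a factorization of an isomorphism by epimorphisms must consist of isomorphisms. There is nothing to add.
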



\section{An application to the Brauer constructions of 
tensor products of $p$-permutation bimodules}\mylabel{section brauer}

Throughout this section let $F$ be a field of prime characteristic $p$. 
Also, $G$, $H$, and $K$ will denote finite groups. We will denote by $FG$ 
the group algebra of $G$ over $F$, and for any $G$-set $X$ we will denote 
by $FX$ the $F$-vector space with basis $X$. The left $G$-action on $X$ 
induces a left $FG$-module structure on $FX$.  Similarly, if $X$ is a 
$(G,H)$-biset then we obtain an $(FG,FH)$-bimodule $FX$. All modules over 
group algebras will be assumed to have finite $F$-dimension.

\begin{nothing}\mylabel{noth trivial source recap}
In this subsection we recall some concepts and results from modular 
representation theory. We refer the reader to \cite{Broue} for the 
statements concerning $p$-permutation modules and the Brauer construction, 
and to \cite{NT} for the theory of vertices of indecomposable modules.

(a) Similarly as for bisets, we will identify left $F[G\times H]$-modules 
$M$ with $(FG,FH)$-bimodules by defining $g m h:=(g,h^{-1})m$ and 
$(g,h)m:=gmh^{-1}$, for $m\in M$, $g\in G$ and $h\in H$. 

(b) An $FG$-module that is isomorphic to a direct summand of a module 
of the form $FX$, for some $G$-set $X$, is called a {\em $p$-permutation module}. 
Thus, we call an $(FG,FH)$-bimodule $M$ a {\em $p$-permutation bimodule} 
if it is isomorphic to a direct summand of a bimodule of the 
form $FX$ for a $(G,H)$-biset $X$.

(c) It is well known and easy to check that if $X$ is a $(G,H)$-biset 
and $Y$ is an $(H,K)$-biset then the map
\begin{equation*}
  F[X\times_H Y]\to FX\otimes_{FH}FY\,,\quad x\times_H y\mapsto x\otimes_{FH} y\,,
\end{equation*}
is an isomorphism of $(FG,FK)$-bimodules.

(d) A {\em vertex} of an indecomposable $FG$-module $M$ is a 
subgroup $P$ of $G$ that is minimal with respect to inclusion and 
the property that $M$ is isomorphic to a direct summand of the $FG$-module 
$FG\otimes_{FP} M$. The set of vertices of $M$ is a single conjugacy class of 
$p$-subgroups of $G$. The vertices of an indecomposable $(FG,FH)$-bimodule $M$ 
are considered as subgroups of $G\times H$, by viewing $M$ as an $F[G\times H]$-module.

(e) If $M$ is an indecomposable $p$-permutation $FG$-module and $P$ 
is a vertex of $M$ then $M$ is isomorphic to a direct summand of $F[G/P]$, 
where $G/P$ is viewed as a $G$-set.

(f) For an $FG$-module $M$ and a $p$-subgroup $P$ of $G$, the {\em Brauer construction} 
of $M$ with respect to $P$ is defined as
\begin{equation*}
  M(P):=M^P/\sum_{Q<P} \tr_Q^P(M^Q)\,
\end{equation*}
where $M^P$ denotes the set of $P$-fixed points of $M$ and 
$\mathrm{tr}_Q^P\colon M^Q\to M^P$ denotes the relative trace map, which 
maps a $Q$-fixed point $m$ of $M$ to $\sum_{xQ\in P/Q}xm$. The vector 
space $M(P)$ inherits an $FN_G(P)$-module structure from the $FG$-module 
structure of $M$. Moreover $P$ acts trivially on $M(P)$. It is a well-known 
fact that if $M(P)\neq 0$ then $P$ is contained in a vertex of some 
indecomposable direct summand of $M$.

(g) It is well known that if $M=FX$, for a $G$-set $X$, then the composition 
\begin{equation*}
  F(X^P)\to (FX)^P\to M(P)
\end{equation*}
of the canonical maps defines an isomorphism of $FN_G(P)$-modules.

(h) Let $M$ be an $(FG,FH)$-bimodule and let $N$ be an $(FH,FK)$-bimodule. 
Moreover let $U\le G$ and $W\le K$ be $p$-subgroups and let $\gamma\in E(U,W)$. 
If $V\le H$ and if $\alpha\in E(U,V)$ and $\beta\in E(V,W)$ satisfy 
$\alpha\beta=\gamma$ then one has a well-defined bilinear map
\begin{equation*}
 M^{\trl(U,\alpha,V)}\times N^{\trl(V,\beta,W)} \to (M\otimes_{FH} N)^{\trl(U,\gamma,W)}\,,\quad (
 m,n)\mapsto m\otimes n\,.
\end{equation*}
Moreover, assume that $m=\tr_{\trl(U',\alpha',V')}^{\trl(U\alpha,V)}(m')$, with 
$\trl(U',\alpha',V')<\trl(U,\alpha,V)$ and $m'\in M^{\trl(U',\alpha',V')}$. 
Then $V'<V$, $U'\le U$ and $\alpha'=\alpha|_{V'}$. We set $W':=\beta^{-1}(V')$ 
and obtain a bijection $W/W'\to V/V'$, induced by $\beta$. We also set 
$\gamma':=\gamma|_{W'}$ and obtain $(U',\gamma',W')\in E(U',W')$ with 
$\gamma'=\alpha'\beta|_{W'}=\gamma|_{W'}$, and $\trl(U',\gamma',W')<\trl(U,\gamma,W)$. 
For $n\in N^{\trl(V,\beta,W)}$ we have
\begin{align*}
  m\otimes n & = \tr_{\trl(U',\alpha',V')}^{\trl(U,\alpha,V)}(m')\otimes n = \sum_{v\in V/V'}\alpha(v)m'v^{-1}\otimes n =
  \sum_{v\in V/V'} \alpha(v)m'\otimes v^{-1}n \\
  & = \sum_{w\in W/W'} \gamma(w)m' \otimes\beta(w)^{-1} n = \sum_{w\in W/W'} \gamma(w)m'\otimes nw^{-1}
  = \tr_{\trl(U',\gamma',W')}^{\trl(U,\gamma,W)}(m'\otimes n)\,.
  \end{align*}
Similarly, if $n$ is a trace from a proper subgroup of $\trl(V,\beta,W)$ and 
$m$ is arbitrary then $m\otimes n$ is again a trace from a proper subgroup
of $\trl(U,\gamma,W)$. 
Thus, the above map induces a bilinear map
\begin{equation*}
  M(\trl(U,\alpha,V))\times N(\trl(V,\beta,W))\to (M\otimes_{FH}N)(\trl(U,\gamma,W))\,,\quad 
  (\overline{m},\overline{n})\mapsto \overline{m\otimes n}\,.
\end{equation*}
Since $M(\trl(U,\alpha,V))$ is an $FN_{G\times H}(\trl(U,\alpha,V))$-module 
and $C_G(U)\times C_H(V)\le N_{G\times H}(\trl(U,\alpha,V))$, we can 
regard $M(\trl(U,\alpha,V))$ as $(FC_G(U),FC_H(V))$-bimodule. Similarly, we 
can view $M(\trl(V,\beta,W))$ as $(FC_H(V),FC_K(W))$-bimodule and 
$M(\trl(U,\gamma,W))$ as $(FC_G(U),FC_K(W))$-bimodule. The above 
map now induces a homomorphism
\begin{equation*}
  M(\trl(U,\alpha,V))\otimes_{FC_H(V)} N(\trl(V,\beta,W))\to (M\otimes_{FH}N)(\trl(U,\gamma,W))\,,\quad 
  \overline{m}\otimes\overline{n}\mapsto \overline{m\otimes n}\,,
\end{equation*}
of $(FC_G(U),FC_K(W))$-bimodules. As a special case, if $\alpha$ 
and $\beta$ are isomorphisms, we obtain a
map 
\begin{equation*}
  M(\Delta(U,\alpha,V))\otimes_{FC_H(V)} N(\Delta(V,\beta,W))\to (M\otimes_{FH}N)(\Delta(U,\gamma,W))\,,\quad 
  \overline{m}\otimes\overline{n}\mapsto \overline{m\otimes n}\,,
\end{equation*}
of $(FC_G(U),FC_K(W))$-bimodules.
\end{nothing}

\begin{theorem}\mylabel{thm Brauer construction 1}
Let $M$ be a $p$-permutation $(FG,FH)$-bimodule and let $N$ be a $p$-permutation 
$(FH,FK)$-bimodule. Assume that the vertices of every indecomposable direct summand 
of $M$ and $N$ lie in $\trl_{G,H}$ and $\trl_{H,K}$, respectively. Let $U\le G$ 
and $W\le K$ be $p$-subgroups, and let $\gamma\in E(U,W)$. Furthermore, 
let $\tilde{\Gamma}_H(U,\gamma,W)$ be a set of representatives of the $H$-orbits 
of $\Gamma_H(U,\gamma,W)$. Then the canonical maps in \ref{noth trivial source recap}(h) 
induce an isomorphism
\begin{align*}
  \bigoplus_{(\alpha,V,\beta)\in\tilde{\Gamma}_H(U,\gamma,W)}
  M(\trl(U,\alpha,V))\otimes_{FC_H(V)} N(\trl(V,\beta,W)) 
  & \myiso (M\otimes_{FH} N)(\trl(U,\gamma,W))\,,\\
  \overline{m} \otimes \overline{n} & \mapsto \overline{m \otimes n}\,,
\end{align*}
of $(FC_G(U),FC_K(W))$-bimodules.
\end{theorem}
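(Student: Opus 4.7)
The plan is to reduce the statement to the case of permutation bimodules on left-free bisets, and then to transport the set-level isomorphism of Theorem~\ref{thm fixed points 1} through the identifications recorded in \ref{noth trivial source recap}(c),(g). For the reduction: by the vertex hypothesis and \ref{noth trivial source recap}(e), every indecomposable summand of $M$ (respectively $N$) is a direct summand of some $F[(G\times H)/L]$ with $L\in\trl_{G,H}$ (respectively $L\in\trl_{H,K}$). Hence there exist a left-free $(G,H)$-biset $X$ and a left-free $(H,K)$-biset $Y$ together with bimodule decompositions $FX\cong M\oplus M'$ and $FY\cong N\oplus N'$. Since the Brauer construction is additive, since both $\otimes_{FH}$ and $\otimes_{FC_H(V)}$ commute with direct sums in either variable, and since the map of the theorem is natural in $M$ and $N$, it suffices to verify the isomorphism when $M=FX$ and $N=FY$.

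In that situation, \ref{noth trivial source recap}(c) and \ref{noth trivial source recap}(g) together rewrite the right-hand side as $F\bigl((X\times_H Y)^{\trl(U,\gamma,W)}\bigr)$. For a fixed summand indexed by $(\alpha,V,\beta)\in\tilde{\Gamma}_H(U,\gamma,W)$ on the left-hand side, note that $Y^{\trl(V,\beta,W)}$ is a left-free $C_H(V)$-set because $Y$ is left-free; this makes the canonical map
\begin{equation*}
F\bigl(X^{\trl(U,\alpha,V)}\bigr)\otimes_{FC_H(V)} F\bigl(Y^{\trl(V,\beta,W)}\bigr)\myiso F\bigl(X^{\trl(U,\alpha,V)}\times_{C_H(V)} Y^{\trl(V,\beta,W)}\bigr)
\end{equation*}
an isomorphism. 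Summing over $\tilde{\Gamma}_H(U,\gamma,W)$, both sides of the theorem thereby become the $F$-linearisations of the two sides of Theorem~\ref{thm fixed points 1}.

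The one point requiring genuine attention is the compatibility of the structure maps. I would need to check that, under the identifications above, the tensor-product map $\bar m\otimes\bar n\mapsto\overline{m\otimes n}$ defined in \ref{noth trivial source recap}(h) corresponds to the $F$-linear extension of the set map $\mubar$ from \ref{noth prod map}. This comes down to the chase that, for $x\in X^{\trl(U,\alpha,V)}$ and $y\in Y^{\trl(V,\beta,W)}$, the bimodule map sends $\bar x\otimes\bar y$ to $\overline{x\otimes y}$, and the isomorphism $FX\otimes_{FH}FY\myiso F[X\times_H Y]$ identifies $x\otimes y$ with $x\times_H y=\mubar(x\times_{C_H(V)} y)$. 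Once this compatibility is established, Theorem~\ref{thm fixed points 1} supplies the required isomorphism in the permutation-bimodule case, and the naturality of the Brauer construction under the splittings $FX\cong M\oplus M'$ and $FY\cong N\oplus N'$ transfers the result back to the original $M$ and $N$.
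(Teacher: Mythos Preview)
Your argument is correct and follows essentially the same route as the paper's proof: reduce to the permutation case $M=FX$, $N=FY$ for left-free bisets $X$, $Y$ via the vertex hypothesis, \ref{noth trivial source recap}(e), naturality of the map, and additivity of both sides in each variable; then identify both sides with the $F$-linearisation of the bijection in Theorem~\ref{thm fixed points 1} through \ref{noth trivial source recap}(c) and (g). The paper phrases the reduction slightly more categorically (as a natural transformation between bilinear functors), but the content is the same.
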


\begin{proof}
We first note that the left-hand side and the right-hand side can be 
considered as the evaluation of a functor from the category product of the 
category of $(FG,FH)$-bimodules and the category of $(FH,FK)$-bimodules to 
the category of $(FC_G(U),FC_K(W))$-bimodules. Moreover, it is easy to see 
that the map in the theorem gives a natural transformation between these two 
functors. Both functors respect direct sums in both arguments in a bilinear 
way. It follows immediately that the map in the theorem is an isomorphism 
for $M$ and $N$ if and only if it is an isomorphism for every pair of 
indecomposable direct summands of $M$ and $N$, respectively. Therefore 
and by \ref{noth trivial source recap}(e), it suffices to show that the 
map is an isomorphism in the case where $M$ and $N$ are of the form $FX$ 
and $FY$, respectively, for a left-free $(G,H)$-biset $X$ and a left-free 
$(H,K)$-biset $Y$. But in this case, the map in 
the theorem is induced by the map in Theorem~\ref{thm fixed points 1}, 
using the canonical identifications from \ref{noth trivial source recap}(c) and (g).
\end{proof}

Remark~\ref{rem epi to iso}(b) and the last sentence in \ref{noth trivial source recap}
imply immediately the following theorem.

\begin{theorem}\mylabel{thm Brauer construction 2}
Let $M$ be a $p$-permutation $(FG,FH)$-bimodule and let $N$ be a 
$p$-permutation $(FH,FK)$-bimodule. Assume that the vertices of every
indecomposable direct summand of $M$ and $N$ lie in $\Delta_{G,H}$ and $\Delta_{H,K}$, 
respectively. Let $U\le G$ and $W\le K$ be isomorphic $p$-subgroups of $G$ and $K$, respectively,
and let $\gamma\in I(U,W)$. Furthermore, let $\tilde{\Gamma}_H(U,\gamma,W)$ be 
a set of representatives of the $H$-orbits of $\Gamma_H(U,\gamma,W)$. 
Then the canonical maps in \ref{noth trivial source recap}(h) induce an isomorphism
\begin{align*}
  \bigoplus_{(\alpha,V,\beta)\in\tilde{\Gamma}_H(U,\gamma,W)}
  M(\Delta(U,\alpha,V))\otimes_{FC_H(V)} N(\Delta(V,\beta,W)) 
  & \myiso (M\otimes_{FH} N)(\Delta(U,\gamma,W))\,,\\
  \overline{m} \otimes \overline{n} & \mapsto \overline{m \otimes n}\,,
\end{align*}
of $(FC_G(U),FC_K(W))$-bimodules.
\end{theorem}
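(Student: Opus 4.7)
The plan is to derive Theorem~\ref{thm Brauer construction 2} as a direct specialization of Theorem~\ref{thm Brauer construction 1}. Since $\Delta_{G,H}\subseteq\trl_{G,H}$ and $\Delta_{H,K}\subseteq\trl_{H,K}$, the hypothesis that the vertices of the indecomposable direct summands of $M$ and $N$ lie in $\Delta_{G,H}$ and $\Delta_{H,K}$ is strictly stronger than the hypothesis of Theorem~\ref{thm Brauer construction 1}. Hence Theorem~\ref{thm Brauer construction 1} applies and yields, for our $\gamma\in I(U,W)\subseteq E(U,W)$, an isomorphism
\begin{equation*}
  \bigoplus_{(\alpha,V,\beta)\in\tilde{\Gamma}_H(U,\gamma,W)}
  M(\trl(U,\alpha,V))\otimes_{FC_H(V)} N(\trl(V,\beta,W))
  \myiso (M\otimes_{FH} N)(\trl(U,\gamma,W))
\end{equation*}
of $(FC_G(U),FC_K(W))$-bimodules.

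Next, I would invoke Remark~\ref{rem epi to iso}(b): since $\gamma$ is an isomorphism, for every $(\alpha,V,\beta)\in\Gamma_H(U,\gamma,W)$ the factorization $\gamma=\alpha\beta$ forces $\beta$ to be injective (hence, as it already lies in $E(V,W)$, an isomorphism), and then $\alpha=\gamma\beta^{-1}$ is also an isomorphism. In particular $V$ is isomorphic to $U$ and to $W$, and the subgroups $\trl(U,\alpha,V)$, $\trl(V,\beta,W)$, $\trl(U,\gamma,W)$ coincide with the twisted diagonal subgroups $\Delta(U,\alpha,V)$, $\Delta(V,\beta,W)$, $\Delta(U,\gamma,W)$, respectively. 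Hence the index set in the display above is unchanged when we write everything in terms of $\Delta$'s, and each summand agrees verbatim with the corresponding summand appearing in Theorem~\ref{thm Brauer construction 2}.

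Finally, the defining formula for the map is the same one, namely $\overline{m}\otimes\overline{n}\mapsto\overline{m\otimes n}$; this is precisely the specialization recorded in the last sentence of \ref{noth trivial source recap}(h), where it is noted that when $\alpha$ and $\beta$ are isomorphisms the general construction of the Brauer-construction pairing descends to the $\Delta$-version. Therefore the isomorphism produced by Theorem~\ref{thm Brauer construction 1} is exactly the map asserted in Theorem~\ref{thm Brauer construction 2}, completing the argument.

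There is essentially no substantive obstacle here; the only thing to keep straight is the identification of the two index sets and of the two maps after passing from $\trl$ to $\Delta$. Once one observes that a factorization of an isomorphism through epimorphisms automatically consists of isomorphisms, this is purely bookkeeping.
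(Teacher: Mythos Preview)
Your proposal is correct and follows essentially the same route as the paper: the paper states that Theorem~\ref{thm Brauer construction 2} is an immediate consequence of Remark~\ref{rem epi to iso}(b) and the last sentence of \ref{noth trivial source recap}, applied to Theorem~\ref{thm Brauer construction 1}, and this is precisely what you spell out.
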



\section{A ghost group $\Btilde^{\trl}(G,H)$ for $B^{\trl}(G,H)$ 
and a mark homomorphism}\mylabel{section left-free ghost group}

In this section we will construct, for any two finite groups $G$ and $H$,
a ghost group $\Btilde^\trl(G,H)$ of the left-free double Burnside 
group $B^{\trl}(G,H)$, together with a mark homomorphism 
$\rho_{G,H}^\trl\colon B^\trl(G,H)\to \Btilde^\trl(G,H)$ that is injective 
and has finite cokernel. For any finite groups $G$, $H$, and $K$, we 
define a bilinear map $-\cdot_H-\colon \Btilde^\trl(G,H)\times \Btilde^\trl(H,K)\to\Btilde^\trl(G,K)$ 
that corresponds under the mark homomorphism to the tensor product 
on the double Burnside groups. 

We will adopt a more functorial and general approach by assuming 
throughout this section that $\mathcal{D}$ is a class of finite 
groups and that, for any $G,H\in\calD$, we are given a set $\calS_{G,H}\subseteq\trl_{G,H}$ 
of subgroups such that $\calD$ together with the system $\calS$ of all
these sets $\calS_{G,H}$ satisfies Condition (I) in 
Hypothesis~\ref{hyp S1}. Note that $\calD$ could be an arbitrary set of 
finite groups, ranging from just one group (as considered in Section~\ref{section fusion systems}) 
to the case of all finite groups. Using a system $\calS$ will  
enable us to carry out the construction of the ghost groups and mark 
homomorphism at the same time for left-free and bifree double Burnside 
groups, as well as for the situation of a fusion system, which will be 
considered in Section~\ref{section fusion systems}. The constructions are functorial in $\calS$ 
in the sense that if $\calS'$ is a subsystem of $\calS$ then 
$\Btilde^{\calS'}(G,H)$ is a subset of $\Btilde^{\calS}(G,H)$ and the mark 
homomorphism $\rho^{\calS'}_{G,H}$ is the restriction of $\rho^{\calS}_{G,H}$.

As before, $R$ denotes an associative unitary commutative ring.

\begin{nothing}\mylabel{noth intro A}
{\sl The group $A^{\calS}(G,H)$.}\quad
For $G,H\in\calD$, we set
\begin{equation*}
  E_{G,H}^{\mathcal{S}}:=\{(U,\alpha,V)\in E_{G,H}\mid \trl(U,\alpha,V)\in\mathcal{S}_{G,H}\}\,.
\end{equation*}
The $G\times H$-equivariant bijection in (\ref{eqn Etrl bijection}) restricts,
by construction, to a $G\times H$-equivariant bijection\marginpar{eqn ES bijection}
\begin{equation}\label{eqn ES bijection}
  E_{G,H}^{\mathcal{S}}\to\mathcal{S}_{G,H}\,,\quad (U,\alpha,V)\mapsto \trl(U,\alpha,V)\,.
\end{equation}
If $\mathcal{S}_{G,H}\subseteq\Delta_{G,H}$ then we also write $I_{G,H}^{\mathcal{S}}$ for $E^{\mathcal{S}}_{G,H}$.

We define $A^{\mathcal{S}}(G,H)$ as the 
free $\ZZ$-module with basis $E_{G,H}^{\mathcal{S}}$. 
In the case that $\mathcal{S}_{G,H}=\trl_{G,H}$ 
we write $A^\trl(G,H)$ for $A^{\mathcal{S}}(G,H)$.
If $\mathcal{S}_{G,H}=\Delta_{G,H}$ then
we also write $A^\Delta(G,H)$ for $A^{\mathcal{S}}(G,H)$.
We identify the $R$-module $R\otimes_{\mathbb{Z}}  A^{\mathcal{S}}(G,H)=RA^{\mathcal{S}}(G,H)$
with the free $R$-module with $R$-basis $E_{G,H}^{\mathcal{S}}$.
The $G\times H$-conjugation action on $E_{G,H}^{\mathcal{S}}$ induces a 
left $R[G\times H]$-module structure on $RA^{\mathcal{S}}(G,H)$.
\end{nothing}

\begin{definition}\mylabel{def ghost and mark}
For $G,H\in\calD$ we define the {\em ghost group} $\Btilde^\calS(G,H)$ of $B^{\calS}(G,H)$ by
\begin{equation*}
  \Btilde^{\calS}(G,H):= A^{\calS}(G,H)^{G\times H}\,,
\end{equation*}
the set of $G\times H$-fixed points of the $\ZZ[G\times H]$-module 
$A^{\calS}(G,H)$. If $[U,\alpha,V]_{G\times H}$ denotes the $G\times H$-orbit 
of the element $(U,\alpha,V)\in E^{\calS}_{G,H}$ then the orbit sums 
$[U,\alpha,V]_{G\times H}^+$ form a $\ZZ$-basis of $\Btilde^{\calS}(G,H)$. 
We set $R\Btilde^{\calS}(G,H):=R\otimes_{\ZZ} \Btilde^{\calS}(G,H)$, and identify this 
$R$-module in the canonical way with the free $R$-module with basis 
$[U,\alpha,V]_{G\times H}^+$. We will call this basis the {\em standard basis} of $R\Btilde^{\calS}(G,H)$.

We define the {\em mark homomorphism} $\rho^{\calS}_{G,H}$ as
the $\ZZ$-linear map, given by
\begin{align*}
  \rho_{G,H}^{\calS}\colon B^{\calS}(G,H) & \to \Btilde^{\calS}(G,H)\\
  [X] & \mapsto \sum_{(U,\alpha,V)\in E^{\calS}_{G,H}} 
  \frac{|X^{\trl(U,\alpha,V)}|}{|C_G(U)|} (U,\alpha,V)\,,
\end{align*}
where $X$ is any $(G,H)$-biset with point stabilizers in $\calS_{G,H}$.
Note that $|C_G(U)|$ divides $|X^{\trl(U,\alpha,V)}|$, since $X^{\trl(U,\alpha,V)}$ is a left-free $C_G(U)$-set, cf.~\ref{noth prod map}.
Tensoring with $R$ over $\ZZ$ induces an $R$-module homomorphism
\begin{equation*}
  \rho_{G,H}^{\calS}\colon RB^{\calS}(G,H)\to R\Btilde^{\calS}(G,H)\,.
\end{equation*}
\end{definition}

\begin{remark}\mylabel{rem functoriality of rho}
If $\calS'$ is another collection of systems of subgroups of $\trl_{G,H}$, 
for $G,H\in\calD$, and if $\calS'\subseteq\calS$ (i.e., if $\calS'_{G,H}\subseteq\calS_{G,H}$ 
for all $G,H\in\calD$) then, for every $G,H\in\calD$, the $R$-module 
$R\Btilde^{\calS'}(G,H)$ is a submodule of $R\Btilde^{\calS}(G,H)$ and the 
map $\rho^{\calS'}_{G,H}\colon RB^{\calS'}(G,H)\to R\Btilde^{\calS'}(G,H)$ is 
the restriction of the map $\rho^{\calS}_{G,H}\colon RB^{\calS}(G,H)\to R\Btilde^{\calS}(G,H)$. 
In fact, since $\calS'_{G,H}$ is closed under taking subgroups, one 
has $\Phi_L(a)= 0$ for every $a\in RB^{\calS'}(G,H)$ and 
$L\in\calS_{G,H}\smallsetminus\calS'_{G,H}$.
\end{remark}

\begin{nothing}\mylabel{noth intro mult on QA}
{\sl Tensor product on $\Btilde^{\calS}(G,H)$ and $\Btilde^{\calS}(H,K)$.}\quad
We first define a $\QQ$-bilinear map\marginpar{eqn mult on QA}
\begin{equation}\label{eqn mult on QA} 
  - \cdotH - \colon \QQ A^{\cal{S}}(G,H)\times \QQ A^{\cal{S}}(H,K) \to \QQ A^{\mathcal{S}}(G,K)\,,
\end{equation}
by setting\marginpar{eqn def of mult}
\begin{equation}\label{eqn def of mult}
  (U,\alpha,V)\cdotH(V',\beta, W):=
  \begin{cases} 
    0, & \text{if $V\neq V'$,}\\
    \frac{|C_H(V)|}{|H|} (U,\alpha\beta,W) & \text{if $V=V'$.}
  \end{cases}
\end{equation}
It will turn out that the unmotivated factor $[H:C_H(V)]^{-1}$ needs to be 
there in order for the mark homomorphism to translate the tensor product of bisets into this product.
For $a\in \QQ A^{\mathcal{S}}(G,H)$, $b\in \QQ A^{\mathcal{S}}(H,K)$, and $c\in \QQ A^{\mathcal{S}}(K,L)$ 
one has\marginpar{eqn associativity}
\begin{equation}\label{eqn associativity}
  (a\cdotH b)\cdotK c = a\cdotH (b\cdotK c)\,.
\end{equation}
In particular, the vector space $\QQ A^{\mathcal{S}}(G,G)$, together 
with the multiplication $-\cdotG-$, is a
$\QQ$-algebra with identity element $\sum_{U\le G} [G:C_G(U)] (U,\id_U, U)$. 
Moreover, for $a$ and $b$ as above, $g\in G$, $h\in H$, and $k\in K$, one has\marginpar{eqn bil property}
\begin{equation}\label{eqn bil property}
  g(a\cdotH b)k = (ga)\cdotH(bk)\quad\text{and}\quad (ah)\cdotH b = a\cdotH (hb)\,.
\end{equation}
The first of these two equations implies that the above bilinear map restricts 
to a $\QQ$-bilinear map\marginpar{eqn mult on QBtilde}
\begin{equation}\label{eqn mult on QBtilde}
    - \cdotH - \colon \QQ A^{\cal{S}}(G,H)^{G\times H}\times \QQ A^{\cal{S}}(H,K)^{H\times K} \to 
    \QQ A^{\cal{S}}(G,K)^{G\times K}\,.
\end{equation}
Recall that $\Btilde^{\calS}(G,H)=A^{\calS}(G,H)^{G\times H}$.
By the next lemma, this bilinear map restricts to a bilinear map
\begin{equation*}
   - \cdotH - \colon \Btilde^{\cal{S}}(G,H)\times \Btilde^{\cal{S}}(H,K) \to \Btilde^{\cal{S}}(G,K)\,,
\end{equation*}
which we call the tensor product. Again, by tensoring with $R$ over $\ZZ$ we 
obtain an $R$-bilinear tensor product 
$-\cdotH-\colon R\Btilde^{\cal{S}}(G,H)\times R\Btilde^{\cal{S}}(H,K) \to R\Btilde^{\cal{S}}(G,K)$. 
If $\calS'\subseteq\calS$, then the tensor product map on 
$R\Btilde^{\cal{S}}(G,H)\times R\Btilde^{\cal{S}}(H,K)$ restricts to the 
one on $R\Btilde^{\cal{S'}}(G,H)\times R\Btilde^{\cal{S'}}(H,K)$.
\end{nothing}

The following lemma describes the result of the bilinear map $- \cdotH -$ 
in (\ref{eqn mult on QA}) on two standard basis elements of $\Btilde^{\calS}(G,H)$
and $\Btilde^{\calS}(H,K)$, respectively.

\begin{lemma}\mylabel{lem multiplication formula} Let $G,H,K\in\calD$, 
and let $(U,\alpha,V)\in E_{G,H}^{\mathcal{S}}$ and $(V',\beta,W)\in E_{H,K}^{\mathcal{S}}$. 

\smallskip
{\rm (a)} If $V$ and $V'$ are not $H$-conjugate then $[U,\alpha,V]_{G\times H}^+ \cdotH [V',\beta,W]_{H\times K}^+=0$. 

\smallskip
{\rm (b)} If $V=V'$ then the following equation holds in $\QQ A^{\calS}(G,H)$:
\begin{equation*}
  [U,\alpha,V]_{G\times H}^+ \cdotH [V,\beta,W]_{H\times K}^+ = 
 \sum_{(g,h,k)\in\mathcal{A}\times\mathcal{B}\times\mathcal{C}}
 \lexp{(g,k)}{(U,\alpha c_h^{-1}\beta,W)}\,.
\end{equation*}
Here, $\mathcal{A}\subseteq G$ is a transversal for $G/p_1(N_{G\times H}(\trl(U,\alpha,V)))$, 
$\mathcal{B}\subseteq N_H(V)$ is a transversal for $N_H(V)/C_H(\ker(\alpha),V)$, 
and $\mathcal{C}\subseteq K$ is a transversal for $K/p_2(N_{H\times K}(\trl(V,\beta,W)))$. 
The right-hand side of the above equation does not depend on the choice of $\calA$, $\calB$, and $\calC$.

\smallskip
{\rm (c)} One has $[U,\alpha,V]_{G\times H}^+ \cdotH [V,\beta,W]_{H\times K}^+\in\Btilde^{\calS}(G,K)$. 

\smallskip
{\rm (d)} The free abelian group
$\Btilde^{\calS}(G,G)$ is a $\ZZ$-order in the $\QQ$-algebra $\QQ A^\calS(G,G)^{G\times G}$, with identity element
\begin{equation*}
 \sum_{U\in\tilde{\Sigma}_G} [U,\id_U,U]_{G\times G}^+\,, 
\end{equation*}
where $\tilde{\Sigma}_G\subseteq \Sigma_G$ is a transversal of the conjugacy classes of subgroups of $G$.
\end{lemma}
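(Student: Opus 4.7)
Plan:

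For part (a), note that every triple in $[U, \alpha, V]^+_{G \times H}$ has third component $H$-conjugate to $V$, while every triple in $[V', \beta, W]^+_{H \times K}$ has first component $H$-conjugate to $V'$. Since (\ref{eqn def of mult}) annihilates any pair whose matching components disagree, the product vanishes whenever $V$ and $V'$ lie in different $H$-conjugacy classes.

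For part (b), my plan is a direct computation with specific transversals. Using Proposition~\ref{prop left-free transversal}(a)(ii) applied to $N_1 := N_{G\times H}(\trl(U, \alpha, V))$, write $[U, \alpha, V]^+_{G \times H} = \sum_{(g, h_1) \in \mathcal{A} \times \mathcal{B}_1} (g, h_1)(U, \alpha, V)$, so that $\mathcal{A}$ is a transversal of $G/p_1(N_1)$ and $\mathcal{B}_1$ a transversal of $H/C_H(\ker\alpha, V)$ (the latter identification coming from Proposition~\ref{prop left-free transversal}(b)). Dually, using part~(i) for $N_2 := N_{H\times K}(\trl(V,\beta,W))$, write $[V, \beta, W]^+_{H \times K} = \sum_{(h_2, k) \in \mathcal{B}_2 \times \mathcal{C}} (h_2, k)(V, \beta, W)$ with $\mathcal{B}_2$ a transversal of $H/C_H(V)$ and $\mathcal{C}$ of $K/p_2(N_2)$. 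Expanding and applying (\ref{eqn def of mult}), each surviving summand requires $h_1^{-1} h_2 \in N_H(V)$ and contributes $\frac{|C_H(V)|}{|H|}\,{}^{(g, k)}(U, \alpha c_{h_1^{-1} h_2} \beta, W)$. Setting $n := h_1^{-1} h_2$ and then $h := n^{-1}$ rewrites the summand as ${}^{(g, k)}(U, \alpha c_h^{-1} \beta, W)$. The crucial observation is that this expression depends on $h$ only modulo $C_H(\ker\alpha, V)$: for $c \in C_H(\ker\alpha, V)$, the fact that $c^{-1} v c \equiv v \pmod{\ker\alpha}$ for all $v \in V$ yields $\alpha c_{(hc)^{-1}} \beta = \alpha c_{h^{-1}} \beta$ on $W$. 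Since $C_H(V) \le C_H(\ker\alpha, V) \le N_H(V)$, I replace the inner transversal of $N_H(V)/C_H(V)$ by a transversal $\mathcal{B}$ of $N_H(V)/C_H(\ker\alpha, V)$ at the cost of a factor $[C_H(\ker\alpha, V):C_H(V)]$. Combining this factor with $|\mathcal{B}_1| = |H|/|C_H(\ker\alpha, V)|$ from the outer $h_1$-sum and the scalar $|C_H(V)|/|H|$, the multiplicities collapse to exactly $1$, yielding the stated formula. Independence of the right-hand side from the choice of $\mathcal{A}$, $\mathcal{B}$, $\mathcal{C}$ is then automatic, since the left-hand side has no such dependence.

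Part (c) follows immediately from (b): the explicit formula has integer coefficients, while (\ref{eqn bil property}) guarantees the product lies in the $(G \times K)$-invariants, so it belongs to $\Btilde^{\calS}(G, K) = A^{\calS}(G, K)^{G \times K}$.

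For part (d), closure of $\Btilde^{\calS}(G, G)$ under $\cdot_G$ is immediate from (a) and (c), and the $\ZZ$-order property follows since $\Btilde^{\calS}(G,G)$ is a free $\ZZ$-module of full rank in $\QQ A^{\calS}(G, G)^{G \times G}$. To verify that $e := \sum_{U \in \tilde{\Sigma}_G} [U, \id_U, U]^+_{G \times G}$ is a two-sided identity, I apply (a) to $e \cdot_G [V, \beta, W]^+$ to reduce the sum to the single term $[V, \id_V, V]^+ \cdot_G [V, \beta, W]^+$ (without loss $V \in \tilde{\Sigma}_G$). Setting $\alpha = \id_V$ in (b), and using Proposition~\ref{prop left-free transversal}(c) to identify $p_1(N_{G\times G}(\Delta(V))) = N_G(V)$ and $C_G(\ker \id_V, V) = C_G(V)$, the map $(g, h, k) \mapsto (gh^{-1}, k) \cdot (V, \beta, W)$ is seen to be a bijection from $\mathcal{A} \times \mathcal{B} \times \mathcal{C}$ onto the $(G \times G)$-orbit of $(V, \beta, W)$: the size count via Proposition~\ref{prop left-free transversal}(b) matches the orbit size $|G|^2/|N_2|$, and surjectivity follows by freely adjusting $gh^{-1}$ modulo $C_G(V)$ and $k$ modulo $p_2(N_2)$. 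Hence the product equals $[V, \beta, W]^+$, and a symmetric argument handles multiplication on the right. The main obstacle is the bookkeeping in part (b)---in particular, verifying that the invariance $\alpha c_{(hc)^{-1}} \beta = \alpha c_{h^{-1}} \beta$ for $c \in C_H(\ker\alpha, V)$ exactly compensates for passing to the coarser transversal, so that all the combinatorial factors combine to give coefficient $1$ on each basis element.
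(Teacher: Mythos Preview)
Your proposal is correct and follows essentially the same approach as the paper. In part~(b) the paper refines both transversals simultaneously via a common transversal $\mathcal{B}_3$ of $H/N_H(V)$ (writing $h_1=hx$, $h_2=hy$), whereas you first observe that the inner sum over $h_2$ is independent of $h_1$ and then coarsen from $N_H(V)/C_H(V)$ to $N_H(V)/C_H(\ker\alpha,V)$; since $C_H(V)\trianglelefteq N_H(V)$, this regrouping is legitimate and the two bookkeeping schemes are equivalent. Your arguments for~(c) via (\ref{eqn bil property}) and for~(d) via an explicit bijection are valid and in fact more detailed than the paper, which dispatches~(d) as ``an easy computation''.
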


\begin{proof}
(a) This follows immediately from the definition of $-\cdotH-$ in (\ref{eqn def of mult}). 

(b) It is straightforward to verify that the right-hand side of the equation 
does not depend on the choice of $\calA$, $\calB$ and $\calC$. In order 
to prove the equation, let $\mathcal{B}_1\subseteq H$ 
be a transversal for $H/C_H(\ker(\alpha),V)$ and let $\mathcal{B}_2\subseteq H$ 
be a transversal for $H/C_H(V)$. Then, by Proposition~\ref{prop left-free transversal}, one has 
\begin{equation*}
  [U,\alpha,V]_{G\times H}^+=\sum_{(g,h_1)\in\mathcal{A}\times\mathcal{B}_1} g(U,\alpha,V)h_1^{-1}
  \quad\text{and}\quad
  [V,\beta,W]_{H\times K}^+=\sum_{(h_2,k)\in\mathcal{B}_2\times\mathcal{C}} h_2(V,\beta,W)k^{-1}\,.
\end{equation*}
Thus, by (\ref{eqn def of mult}) and (\ref{eqn bil property}), we obtain
\begin{equation*}
  [U,\alpha,V]_{G\times H}^+\cdotH[V,\beta,W]_{H\times K}^+ =
  \sum_{(g,k)\in\mathcal{A}\times\mathcal{C}} 
  g\Bigl(\sum_{\substack{(h_1,h_2)\in\mathcal{B}_1\times\mathcal{B}_2 \\ h_1^{-1}h_2\in N_H(V)}}
  \frac{|C_H(V)|}{|H|}
  (U,\alpha c_{h_1^{-1}h_2}\beta,W)\Bigr)k^{-1}\,.
\end{equation*}
Now let $\mathcal{B}_3\subseteq H$ be a transversal of $H/N_H(V)$, 
let $\mathcal{B}'_1\subseteq N_H(V)$ be a transversal of $N_H(V)/C_H(\ker(\alpha),V)$ 
and let $\mathcal{B}'_2\subseteq N_H(V)$ be a transversal of $N_H(V)/C_H(V)$. 
Then, using the relations $h_1=hx$ and $h_2=hy$, we can rewrite the inner sum as
\begin{equation*}
  \sum_{h\in \mathcal{B}_3} \sum_{(x,y)\in\mathcal{B}'_1\times \mathcal{B}'_2}
  \frac{|C_H(V)|}{|H|}  (U,\alpha c_{x^{-1}y}\beta,W) =
  \frac{|C_H(V)|}{|N_H(V)|} 
  \sum_{(x,y)\in\mathcal{B}'_1\times \mathcal{B}'_2} (U,\alpha c_{x^{-1}y}\beta,W)\,.
\end{equation*}
Finally, since for every $y\in\mathcal{B}'_2$ the elements $y^{-1}x$, 
$x\in\mathcal{B}'_1$, form again a transversal of $N_H(V)/C_H(\ker(\alpha),V)$ 
and since $C_H(\ker(\alpha),V)$ is equal to the set of all 
$h\in N_H(V)$ such that $\alpha c_h=\alpha$, we obtain
\begin{equation*}
  \sum_{(x,y)\in\mathcal{B}'_1\times \mathcal{B}'_2} (U,\alpha c_{x^{-1}y}\beta,W) = 
  [N_H(V):C_H(V)]\sum_{x\in \mathcal{B}'_1} (U,\alpha c_x^{-1}\beta,W)\,.
\end{equation*}
Altogether, we obtain the desired equation with $\mathcal{B}'_1=\mathcal{B}$.

(c) To see that $[U,\alpha,V]_{G\times H}^+ \cdotH [V,\beta,W]_{H\times K}^+\in\Btilde^{\calS}(G,K)$, 
recall that $\Btilde^{\calS}(G,K)=A^{\calS}(G,K)^{G\times K}$. Since the 
right-hand side of the equation in (b) is 
contained in $A^{\calS}(G,K)$, it suffices to show that it is a $G\times K$-fixed point. 
But, for every $(x,y)\in G\times K$, the set $x\calA$ (respectively $y\calC$) is again 
a transversal for $G/p_1(N_{G\times H}(\trl(U,\alpha,V)))$ 
(respectively $K/p_2(N_{H\times K}(\trl(V,\beta,W)))$). 

(d) An easy computation, using Proposition~\ref{prop left-free transversal} again,
shows that the given element is the identity element. This completes the proof. 
\end{proof}

\begin{nothing}\mylabel{noth ghost op}
Let $G,H,K\in \calD$ and assume that $\calS_{G,H}\subseteq \Delta_{G,H}$,
that $\calS_{H,K}\subseteq \Delta_{H,K}$,
and that $\calS_{G,H}$ and $\calS_{H,K}$
also satisfy the symmetry condition (v) in Hypothesis~\ref{hyp S1}. Assume 
further that $R$ is a commutative ring such that $|G|$, $|H|$, and $|K|$ 
are invertible in $R$.
Then the $R$-module homomorphism given by
$$-^\circ\colon RA^{\mathcal{S}}(G,H)\to RA^{\mathcal{S}}(H,G),\,\quad 
  (U,\alpha,V)\mapsto \frac{|C_H(V)|}{|C_G(U)|} (V,\alpha^{-1},U)\,,$$ 
is an isomorphism satisfying
$$(a^\circ)^\circ=a\quad \text{ and }\quad (a\cdotH b)^\circ=b^\circ\cdotH a^\circ,$$
for all $a\in  RA^{\mathcal{S}}(G,H)$ and all $b\in RA^{\mathcal{S}}(H,K)$. It restricts to
an $R$-module isomorphism
$$-^\circ\colon R\Btilde^{\cal{S}}(G,H)\to R\Btilde^{\cal{S}}(H,G),\,\quad [U,\alpha,V]^+_{G\times H}\mapsto \frac{|C_H(V)|}{|C_G(U)|}[V,\alpha^{-1},U]_{H\times G}^+\,.$$
\end{nothing}

We are now ready to prove the main theorem of this section.

\begin{theorem}\mylabel{thm mark 1}
Let $G,H,K\in\calD$ and let $R$ be a commutative ring.

\smallskip
{\rm (a)} For $a\in\Btilde^{\calS}(G,H)$ and $b\in\Btilde^{\calS}(H,K)$, one has
\begin{equation*}
  \rho_{G,K}^{\calS}(a\cdotH b) = \rho_{G,H}^{\calS}(a)\cdotH\rho_{H,K}^{\calS}(b)\,.
\end{equation*}

\smallskip
{\rm (b)} The map $\rho_{G,H}^{\calS}\colon B^{\calS}(G,H)\to \Btilde^{\calS}(G,H)$ 
is an injective group homomorphism with finite cokernel whose order 
divides a power of $|G\times H|$. If $|G\times H|$ is a unit in $R$ 
then the induced $R$-module homomorphism is an isomorphism.

\smallskip
{\rm (c)} The map $\rho_{G,G}^{\calS}\colon B^{\calS}(G,G)\to \Btilde^{\calS}(G,G)$ 
is an injective ring homomorphism with image of finite index. If $|G|$ is a 
unit in $R$ then the induced $R$-algebra homomorphism 
$\rho^{\calS}_{G,G}\colon RB^{\calS}(G,G)\to R\Btilde^{\calS}(G,G)$ is an isomorphism.

\smallskip
{\rm (d)} Assume that $\calS_{G,H}\le\Delta_{G,H}$, that $\calS_{G,H}$ 
satisfies the symmetry condition (v) in Hypothesis~\ref{hyp S1}, and 
that $|G\times H|$ is invertible in $R$. Then
\begin{equation*}
  \rho_{H,G}^{\calS}(a^\circ)=\rho_{G,H}^{\calS}(a)^\circ
\end{equation*}
for all $a\in RB^{\calS}(G,H)$.
\end{theorem}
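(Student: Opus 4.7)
The heart is part (a); parts (b)--(d) will follow from it together with standard bookkeeping. For (a), by bilinearity I reduce to the case $a = [X]$ and $b = [Y]$, with $X$ a $(G,H)$-biset and $Y$ an $(H,K)$-biset, both having point stabilizers in the appropriate $\calS$. Expanding via Definition~\ref{def ghost and mark} and the multiplication rule (\ref{eqn def of mult}), the left-hand side of (a) becomes
\begin{equation*}
\rho^{\calS}_{G,H}([X])\cdotH \rho^{\calS}_{H,K}([Y]) = \sum_{(U,\alpha,V)}\sum_{(V,\beta,W)}\frac{|X^{\trl(U,\alpha,V)}|}{|C_G(U)|}\cdot\frac{|Y^{\trl(V,\beta,W)}|}{|C_H(V)|}\cdot\frac{|C_H(V)|}{|H|}\,(U,\alpha\beta,W)\,,
\end{equation*}
the $V$ in the two summands being the same variable. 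The $|C_H(V)|$ factors cancel. After grouping the terms by the resulting triple $(U,\gamma,W)$ with $\gamma=\alpha\beta$ and applying the first equation of Theorem~\ref{thm fixed points 1}, the inner double sum $\sum_{V}\sum_{\alpha\beta=\gamma}|X^{\trl(U,\alpha,V)}|\cdot|Y^{\trl(V,\beta,W)}|$ evaluates to $|H|\cdot|(X\times_H Y)^{\trl(U,\gamma,W)}|$, which cancels the remaining $|H|^{-1}$ and leaves $|(X\times_H Y)^{\trl(U,\gamma,W)}|/|C_G(U)|$. This is exactly the coefficient of $(U,\gamma,W)$ in $\rho^{\calS}_{G,K}([X\times_H Y]) = \rho^{\calS}_{G,K}(a\cdotH b)$.

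For (b), I will write down the matrix of $\rho^{\calS}_{G,H}$ in the standard bases of $B^{\calS}(G,H)$ and $\Btilde^{\calS}(G,H)$, with rows and columns indexed by a transversal $\calL^{\calS}$ of the $G\times H$-conjugacy classes of subgroups in $\calS_{G,H}$ ordered compatibly with subconjugacy. As in Proposition~\ref{prop bifree constructions}(c), the matrix is upper triangular (since $(G\times H/L)^{L'}=\emptyset$ unless $L'$ is subconjugate to $L$), with diagonal entry at $L = \trl(U,\alpha,V)$ equal to $[N_{G\times H}(L):L]/|C_G(U)|$. This is a positive integer, because $(G\times H/L)^L$ is a free $C_G(U)$-set (cf.\ the note in Definition~\ref{def ghost and mark}), and it divides $|G\times H|$ by Lagrange, since $[N_{G\times H}(L):L]$ divides $|N_{G\times H}(L)|$, itself a divisor of $|G\times H|$. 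Hence the determinant is nonzero and divides $|G\times H|^{|\calL^{\calS}|}$, giving injectivity and the divisibility of the cokernel order by a power of $|G\times H|$; tensoring with $R$ when $|G\times H|\in R^\times$ yields an isomorphism. Part (c) is then (a) plus (b), once one checks that the unit $[G\times G/\Delta(G)]$ of $B^{\calS}(G,G)$ is sent to the unit of $\Btilde^{\calS}(G,G)$ displayed in Lemma~\ref{lem multiplication formula}(d); identifying $G\times G/\Delta(G)$ with the biset $G$ (left and right multiplication), an element $x\in G$ is $\trl(U,\alpha,V)$-fixed exactly when $U = xVx^{-1}$ and $\alpha = c_x|_V$, and the resulting coefficients yield $\sum_{[U]}[U,\id_U,U]^+_{G\times G}$.

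For (d), reduce once more to $a = [X]$ with $X$ bifree. From \ref{noth opposite biset}, one has $\Delta(U,\alpha,V)^\circ = \Delta(V,\alpha^{-1},U)$, and a direct translation of the defining relation of fixed points shows that $x\mapsto x^\circ$ is a bijection $X^{\Delta(U,\alpha,V)}\myiso(X^\circ)^{\Delta(V,\alpha^{-1},U)}$. Substituting the equality of cardinalities into $\rho^{\calS}_{H,G}([X^\circ])$ and reindexing by $(U,\alpha,V)\in I^{\calS}_{G,H}$ (via the correspondence with $(V,\alpha^{-1},U)\in I^{\calS}_{H,G}$), one obtains an expression whose term at $(V,\alpha^{-1},U)$ is to be compared with that of $\rho^{\calS}_{G,H}([X])^\circ$ produced through the rule in \ref{noth ghost op}; the normalization scalar in that rule is calibrated precisely so the two terms coincide for every $(U,\alpha,V)$, and the hypothesis $|G\times H|\in R^\times$ accommodates the rational factors. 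The principal obstacle is the bookkeeping in part (a): one must notice that the factor $|C_H(V)|/|H|$ inserted into (\ref{eqn def of mult}) is exactly what is needed to transmute the $|H|^{-1}$-weighted fixed-point formula of Theorem~\ref{thm fixed points 1} into a multiplicative statement about $\rho$. Once this reconciliation is in place, (b)--(d) are routine consequences of the triangularity of the mark matrix, a direct identity computation, and a fixed-point bijection.
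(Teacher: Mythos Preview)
Your proof is correct and follows essentially the same approach as the paper's own proof: part~(a) via reduction to biset classes and Theorem~\ref{thm fixed points 1}, part~(b) via the upper-triangular mark matrix, part~(c) as a corollary of (a) and (b), and part~(d) via the bijection $X^{\Delta(U,\alpha,V)}\cong (X^\circ)^{\Delta(V,\alpha^{-1},U)}$. The only small point worth making explicit in~(a) is that the double sum over $(U,\alpha,V)\in E^{\calS}_{G,H}$ and $(V,\beta,W)\in E^{\calS}_{H,K}$ may be freely extended to all of $E_{G,H}$ and $E_{H,K}$ (since $|X^{\trl(U,\alpha,V)}|=0$ whenever $\trl(U,\alpha,V)\notin\calS_{G,H}$, by closure of $\calS_{G,H}$ under subgroups), so that Theorem~\ref{thm fixed points 1} applies directly; the paper states this step explicitly.
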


\begin{proof}
(a)  We may assume that $a=[X]$ and $b=[Y]$, for a $(G,H)$-biset 
$X$ and an $(H,K)$-biset $Y$
such that $\stab_{G\times H}(x)\in\mathcal{S}_{G,H}$
for every $x\in X$, and $\stab_{H\times K}(y)\in\mathcal{S}_{H,K}$
for every $y\in Y$. Then
\begin{align*}
  & \ \ \rho_{G,H}^{\calS}(a)\cdotH \rho_{H,K}^{\calS}(b) \\
  = & \ \Bigl( \sum_{(U,\alpha,V)\in E_{G,H}^{\mathcal{S}}} 
  \frac{|X^{\trl(U,\alpha,V)}|}{|C_G(U)|}(U,\alpha,V)\Bigr)
  \cdotH 
  \Bigl( \sum_{(V',\beta,W)\in E_{H,K}^{\mathcal{S}}} 
  \frac{|Y^{\trl(V',\beta,W)}|}{|C_H(V')|} (V',\beta,W) \Bigr) \\
  = & \ \sum_{\substack{(U,\alpha,V)\in E_{G,H}^{\mathcal{S}} \\ (V,\beta,W)\in E_{H,K}^{\mathcal{S}}}}
  \frac{|X^{\trl(U,\alpha,V)}|\cdot |Y^{\trl(V,\beta,W)}|}{|C_G(U)|\cdot|H|} (U,\alpha\beta,W)\\
  = & \ \sum_{\substack{(U,\alpha,V)\in E_{G,H}\\ (V,\beta,W)\in E_{H,K}}}
  \frac{|X^{\trl(U,\alpha,V)}|\cdot |Y^{\trl(V,\beta,W)}|}{|C_G(U)|\cdot|H|} (U,\alpha\beta,W)\,,
\end{align*}
since $|X^L|=0=|Y^M|$ for all $L\in \trl_{G,H}\smallsetminus \calS_{G,H}$ and all 
$M\in \trl_{H,K}\smallsetminus \calS_{H,K}$.

We fix a standard basis element $(U,\gamma,W)\in E_{G,K}^{\mathcal{S}}$ of 
$A^{\mathcal{S}}(G,K)$. By the above, the coefficient of $\rho_{G,H}^{\calS}(a)\cdotH\rho^{\calS}_{H,K}(b)$ 
at the basis element $(U,\gamma,W)$ is equal to the number
\begin{equation*}
  \frac{1}{|C_G(U)|\cdot|H|} 
  \sum_{(\alpha,V,\beta)\in\Gamma_H(U,\gamma,W)} |X^{\trl(U,\alpha,V)}| \cdot |Y^{\trl(V,\beta,W)}|\,,
\end{equation*}
where $\Gamma_H(U,\gamma,W)$ is defined as in Subsection~\ref{noth Sigma Gamma}(b). 
On the other hand, the coefficient of $\rho_{G,K}^{\calS}(a\cdotH b)=\rho_{G,K}^{\calS}([X\times_H Y])$ 
at the basis element $(U,\gamma,W)$ is equal to the number
\begin{equation*} 
  \frac{|(X\times_H Y)^{\trl(U,\gamma,W)}|}{|C_G(U)|}\,. 
\end{equation*}
But, by Theorem~\ref{thm fixed points 1}, these two numbers are equal.

(b) Using the bijection (\ref{eqn ES bijection}), we see that the 
standard bases of $B^{\calS}(G,H)$ and $\Btilde^{\calS}(G,H)$ are 
both parametrized by the set of $G\times H$-conjugacy classes of $\calS_{G,H}$. 
Arranging basis elements with respect to ascending group order, 
the matrix describing $\rho_{G,H}^{\calS}$ with respect to these 
bases is upper triangular with diagonal entries of the form 
$[N_{G\times H}(L):L]/|C_G(U)|$, where $L=\trl(U,\alpha,V)$, 
cf.~the proof of Proposition~\ref{prop bifree constructions}(c). 
The statements in Part~(b) follow easily.

(c) This is an immediate consequence of Parts (a) and (b). 
Note that $\Btilde^{\calS}(G,G)$ is a ring, by Lemma~\ref{lem multiplication formula}(d).

(d) We may assume that $a=[X]$, for a bifree $(G,H)$-biset $X$ such that
$\stab_{G\times H}(x)\in\mathcal{S}_{G,H}$ for every $x\in X$. Then we have
\begin{align*}
  \rho_{G,H}^{\calS}([X])^\circ & 
  = \Bigl[ \sum_{(U,\alpha,V)\in I^{\calS}_{G,H}} \frac{|X^{\Delta(U,\alpha,V)}|}{|C_G(U)|} (U,\alpha,V) \Bigr]^\circ \\
  & = \sum_{(U,\alpha,V)\in I_{G,H}^{\calS}} \frac{|X^{\Delta(U,\alpha,V)}|}{|C_H(V)|} (V,\alpha^{-1},U) \\
  & = \sum_{(U,\alpha,V)\in I_{G,H}^{\calS}} \frac{|(X^\circ)^{\Delta(V,\alpha^{-1},U)}|}{|C_H(V)|} (V,\alpha^{-1},U) \\
  & = \rho_{H,G}^{\calS}([X]^\circ)\,.
\end{align*}
Here we used that the map $I_{G,H}^{\calS}\to I_{H,G}^{\calS}$, 
$(U,\alpha,V)\mapsto (V,\alpha^{-1},U)$, is a bijection by 
Hypothesis~\ref{hyp S1}(v), and that $x\in X^L$ if and only if $x^\circ\in (X^\circ)^{L^\circ}$, 
for all $x\in X$ and all $L\le G\times H$.
\end{proof}

\begin{remark}\mylabel{rem inverse} 
The inverse of the isomorphism 
$\rho_{G,H}^\trl\colon \QQ B^{\trl}(G,H)\to \QQ\Btilde^{\trl}(G,H)$ can 
be given explicitly. In fact, one can use the inversion formula in \cite{Gluck} to see that 
\begin{equation*}
  (\rho_{G,H}^\trl)^{-1}([U,\alpha,V]_{G\times H}^+) = \frac{|C_G(U)|}{|N_{G\times H}(\trl(U,\alpha,V))|}
  \sum_{W\le V} |W| \cdot \mu(W,V) \cdot \bigl[G\times H/\trl(\alpha(W),\alpha,W)\bigr]\,.
\end{equation*}
Here, we used that the map $W\mapsto (\alpha(W),\alpha|_W,W)$ defines 
an isomorphism between the partially ordered sets of subgroups of $V$ 
and subgroups of $\trl(U,\alpha,V)$. Also, $\mu$ denotes the M\"obius 
function of the partially ordered set of subgroups of $H$.
\end{remark}

\begin{remark}\mylabel{rem biset algebra Atilde}
Given a set $\calD$, and $\calS_{G,H}\subseteq \trl_{G,H}$ for $G,H\in\calD$, such that 
Condition (I) in Hypothesis~\ref{hyp S1} is satisfied, we can define a ring 
$\Atilde^{\calD,\calS}$ by
\begin{equation*}
  \Atilde^{\calD,\calS}:=\bigoplus_{G,H\in\calD} \Btilde^{\calS}(G,H)\,,
\end{equation*}
with multiplication defined as follows: for $a\in\Btilde^{\calS}(G,H)$ and 
$b\in\Btilde^{\calS}(H',K)$ set $a\cdot b:=a\cdotH b$ if $H=H'$, and $a\cdot b:=0$ 
if $H\neq H'$. By Theorem~\ref{thm mark 1}, the collection of the maps 
$\rho^{\calS}_{G,H}\colon B^{\calS}(G,H)\to\Btilde^{\calS}(G,H)$, $G,H\in\calD$, 
defines an injective ring homomorphism
\begin{equation*}
  \rho\colon A^{\calD,\calS}\to\Atilde^{\calD,\calS}\,.
\end{equation*}
Here, $A^{\calD,\calS}$ is as in \ref{noth biset functors}.
For a commutative ring $R$, we set $\Atilde^{\calD,\calS}_R:=R\otimes_{\ZZ} \Atilde^{\calD,\calS}$ 
and identify $\Atilde^{\calD,\calS}_R$ canonically with 
$\bigoplus_{G,H\in\calD} R\Btilde^{\calS}(G,H)$. If, for every $G\in\calD$, the number $|G|$ 
is a unit in $R$ then the above ring homomorphism induces an $R$-algebra isomorphism
\begin{equation*}
  \rho\colon A^{\calD,\calS}_R\liso \Atilde^{\calD,\calS}_R\,.
\end{equation*}
In this case, the abelian categories $\lModstar{A^{\calD,\calS}_R}$ and 
$\lModstar{\Atilde^{\calD,\calS}_R}$ are isomorphic via $\rho$. 
Here, the category $\lModstar{\Atilde^{\calD,\calS}_R}$ ist defined in analogy
to the category $\lModstar{A^{\calD,\calS}_R}$ in \ref{noth biset functors}.
Combining this with 
the category equivalence in \ref{noth biset functors}(b) we obtain a category 
equivalence between the biset functor category $\Func^{\calD,\calS}_R$ and the 
category $\lModstar{\Atilde^{\calD,\calS}_R}$. This is useful, since the ring structure 
of $\Atilde^{\calD,\calS}_R$ is much more transparent than the ring structure of 
$A^{\calD,\calS}_R$, as we will see in the following sections.
\end{remark}


\section{The multiplicative structure of $\Btilde^\Delta(G,G)$}\mylabel{section BDeltatilde}

In this section we introduce a natural direct-product decomposition 
of the ghost ring $\Btilde^\Delta(G,G)$ into subrings indexed by the 
isomorphism classes of subgroups of $G$. We also give two descriptions 
of these components in terms of Hecke algebras, i.e., endomorphism rings of permutation modules.

Throughout this section let $\mathcal{T}$ denote a set of representatives 
of the  isomorphism classes of finite groups. Recall that, for a finite 
group $G$ and $T\in\calT$, we denote by $\Sigma_G(T)$ the set of subgroups 
of $G$ that are isomorphic to $T$. We assume again that $\calD$ is a class 
of finite groups and that, for every $G,H\in\calD$, $\calS_{G,H}\subseteq \Delta_{G,H}$ 
is a set of subgroups of $G\times H$ satisfying Condition (I)
in Hypothesis~\ref{hyp S1}. We emphasize that, in this section, 
we assume that $\calS_{G,H}$ is contained in $\Delta_{G,H}$. The constructions 
we present will not work for the larger ghost ring $\Btilde^{\trl}(G,G)$ of the 
left-free double Burnside ring.

Again, $R$ will denote a commutative ring.

\begin{nothing}\mylabel{noth AT}
{\sl A decomposition of $\Btilde^\Delta(G,H)$.}\quad
Let $G,H,K\in\calD$. Recall from \ref{noth intro A} that $I^{\calS}_{G,H}$ 
consists of all triples $(U,\alpha,V)\in I_{G,H}$ such that 
$\Delta(U,\alpha,V)\in\calS_{G,H}$. The set $I_{G,H}^{\calS}$ decomposes into a disjoint union
\begin{equation*}
  I_{G,H}^{\calS}=\bigcup_{T\in\mathcal{T}} I^{\calS}_{G,H,T}\,,
\end{equation*} 
where
\begin{equation*}
  I^{\calS}_{G,H,T}:=\{(U,\alpha,V)\in I_{G,H}\mid U\cong T\cong V\}\,.
\end{equation*}
This decomposition gives rise to direct-sum decompositions
\begin{equation*}
  A^{\calS}(G,H)=\bigoplus_{T\in\calT} A^{\calS}_T(G,H)\quad\text{and}\quad
  RA^{\calS}(G,H)= \bigoplus_{T\in\calT} RA^{\calS}_T(G,H)\,,
\end{equation*}
where $A^{\calS}_T(G,H)$ (respectively $RA^{\calS}_T(G,H)$) is the $\ZZ$-span 
(respectively $R$-span) of the subset $I^{\calS}_{G,H,T}$ of $I^{\calS}_{G,H}$. 
Note that in the above direct sum all but finitely many summands are equal 
to $\{0\}$, since $I^{\calS}_{G,H,T}$ is non-empty only if $G$ and $H$ have 
subgroups that are isomorphic to $T$. Note further that $I^{\calS}_{G,H,T}$ is 
a $G\times H$-stable subset of $I^{\calS}_{G,H}$, for all $T\in\calT$. 
Therefore, taking $G\times H$-fixed points, we obtain 
decompositions\marginpar{eqn RAT decomposition}
\begin{equation}\label{eqn RAT decomposition}
  \Btilde^{\calS}(G,H)=\bigoplus_{T\in\calT}\Btilde^{\calS}_T(G,H)\quad\text{and}\quad
  R\Btilde^{\calS}(G,H)=\bigoplus_{T\in\calT}R\Btilde^{\calS}_T(G,H)\,,
\end{equation}
where $\Btilde^{\calS}_T(G,H):=A^\calS_T(G,H)^{G\times H}$ for $T\in\calT$.
By the definition of the multiplication in $A^\Delta(G,H)$, cf.~(\ref{eqn def of mult}), 
the decomposition~(\ref{eqn RAT decomposition}) satisfies\marginpar{eqn RAT annihilation}
\begin{equation}\label{eqn RAT annihilation}
  R\Btilde^{\calS}_{T_1}(G,H)\cdotH R\Btilde^{\calS}_{T_2}(H,K) = 0 \quad \text{if $T_1\neq T_2$.}
\end{equation}
Thus, the bilinear map $-\cdotH-$ is the collection of componentwise 
bilinear maps with respect to the decomposition in (\ref{eqn RAT decomposition}).

We can now write the mark homomorphism as a collection\marginpar{eqn rhoT map}
\begin{equation}\label{eqn rhoT map}
  \rho^{\calS}_{G,H}=(\rho^{\calS}_{G,H,T})_{T\in\calT}\colon 
  B^{\calS}(G,H)\liso \bigoplus_{T\in\calT} \Btilde^{\calS}_T(G,H)
\end{equation}
of homomorphisms $\rho^{\calS}_{G,H,T}\colon B^{\calS}(G,H)\to\Btilde^{\calS}_T(G,H)$.

\smallskip
If $\calS_{G,H}=\Delta_{G,H}$ then we will use the notation 
$A^\Delta_T(G,H)$ and $\Btilde^\Delta_T(G,H)$ for $A^{\calS}_T(G,H)$ and $\Btilde^{\calS}_T(G,H)$, respectively.
\end{nothing}

Combining the above statements with those from Lemma~\ref{lem multiplication formula} 
and Theorem~\ref{thm mark 1}, we obtain the following theorem.

\begin{theorem}\mylabel{thm T-decomposition 1}
Let $G,H,K\in\calD$.

{\rm (a)} Let $a=(a_T)\in R\Btilde^{\calS}(G,H)$, and $b=(b_T)\in R\Btilde^{\calS}(H,K)$. Then one has
\begin{equation*}
  (a_T)\cdotH(b_T)=(a_T\cdotH b_T).
\end{equation*}

{\rm (b)} For $T\in\calT$, the $R$-module $R\Btilde^{\calS}_T(G,G)$ 
is an $R$-subalgebra of $RA_T^{\calS}(G,G)$ with identity element
\begin{equation*}
  e_{G,T}:=\sum_{U\in\tilde{\Sigma}_G(T)} [U,\id_U,U]_{G\times G}^+\,,
\end{equation*}
where $\tilde{\Sigma}_G(T)\subseteq \Sigma_G(T)$ denotes a set of representatives 
of the $G$-conjugacy classes of $\Sigma_G(T)$. If $|G|$ is invertible in $R$ then\begin{equation*}
  \rho^{\calS}_{G,G}\colon RB^{\calS}(G,G)\liso \bigoplus_{T\in\calT} R\Btilde^{\calS}_T(G,G)\,.
\end{equation*}
is an isomorphism of $R$-algebras.
\end{theorem}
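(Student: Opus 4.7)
The plan is to derive both parts directly from results already established in Section~\ref{section left-free ghost group}, particularly the orthogonality property~(\ref{eqn RAT annihilation}).

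For part~(a), I would expand $a\cdotH b = \bigl(\sum_{T_1}a_{T_1}\bigr)\cdotH\bigl(\sum_{T_2}b_{T_2}\bigr)$ using $R$-bilinearity of the tensor product, obtaining a double sum over pairs $(T_1,T_2)\in\calT\times\calT$. By~(\ref{eqn RAT annihilation}), all off-diagonal terms vanish, leaving only the diagonal contributions $a_T\cdotH b_T$. Recall that~(\ref{eqn RAT annihilation}) is itself immediate from the multiplication formula~(\ref{eqn def of mult}): the product $(U,\alpha,V)\cdotH(V',\beta,W)$ vanishes unless $V = V'$, which forces $U\cong V = V'\cong W$ and hence the indexing classes $T_1$ and $T_2$ to coincide in $\calT$.

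For part~(b), I would first verify that $e_{G,T}$ is the identity of $R\Btilde^{\calS}_T(G,G)$. Starting from Lemma~\ref{lem multiplication formula}(d), the identity of $R\Btilde^{\calS}(G,G)$ is $\sum_{U\in\tilde{\Sigma}_G}[U,\id_U,U]_{G\times G}^+$. Partitioning the transversal $\tilde{\Sigma}_G$ according to isomorphism type yields the decomposition $1 = \sum_{T\in\calT}e_{G,T}$. By part~(a), these summands are mutually orthogonal idempotents with $e_{G,T}\in R\Btilde^{\calS}_T(G,G)$, so each $e_{G,T}$ serves as the identity of its component. The algebra isomorphism in the case where $|G|$ is invertible in $R$ then combines Theorem~\ref{thm mark 1}(c), which already provides the $R$-algebra isomorphism $\rho^{\calS}_{G,G}\colon RB^{\calS}(G,G)\liso R\Btilde^{\calS}(G,G)$, with the $R$-module decomposition~(\ref{eqn RAT decomposition}); part~(a) promotes the latter to a decomposition of $R$-algebras (equivalently a direct product, since only finitely many components are nonzero).

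I do not anticipate a substantial obstacle: the theorem is largely a bookkeeping statement that reorganizes the data of Section~\ref{section left-free ghost group} according to isomorphism classes of subgroups, and the essential multiplicative content is concentrated in~(\ref{eqn RAT annihilation}), itself a routine consequence of~(\ref{eqn def of mult}). The only point requiring care is being explicit about the compatibility between the $R$-module direct sum~(\ref{eqn RAT decomposition}) and the algebra structure, which is precisely what part~(a) is designed to provide.
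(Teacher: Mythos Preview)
Your proposal is correct and matches the paper's approach: the paper does not give a separate proof but simply states that the theorem follows by combining the decomposition~(\ref{eqn RAT decomposition}), the orthogonality~(\ref{eqn RAT annihilation}), Lemma~\ref{lem multiplication formula}, and Theorem~\ref{thm mark 1}, which is exactly the argument you spell out. One minor slip: the orthogonality~(\ref{eqn RAT annihilation}) and the decomposition~(\ref{eqn RAT decomposition}) are established in the preamble of Section~\ref{section BDeltatilde}, not Section~\ref{section left-free ghost group}, but this does not affect the substance.
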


\bigskip\noindent
Next, we will give an alternative construction of ghost groups and
 mark homomorphisms for $B^{\calS}(G,H)\subseteq B^\Delta(G,H)$ in terms of 
homomorphism groups between permutation modules.

\begin{nothing}\mylabel{noth Inj}
{\sl The mark homomorphism $\sigma_{G,H}^{\calS}$.}\quad
(a) For $T\in\mathcal{T}$, let $\Inj(T,G)$ denote the set of injective 
homomorphisms $\lambda\colon T\to G$. Note that $\Inj(T,G)$ is a 
$(G,\Aut(T))$-biset via $g\lambda \omega:=c_g\circ\lambda\circ\omega$ for $g\in G$, 
$\lambda\in\Inj(T,G)$ and $\omega\in\Aut(T)$. We denote by $\Injbar(T,G)$ the 
set of $G$-orbits of $\Inj(T,G)$, and denote by $[\lambda]$ the $G$-orbit of 
an element $\lambda\in\Inj(T,G)$. The set $\Injbar(T,G)$ is still a right 
$\Aut(T)$-set and the group $\Inn(T)$ of inner automorphisms of $T$ acts 
trivially on $\Injbar(T,G)$, since $\lambda\circ c_t=c_{\lambda(t)}\circ\lambda$ 
for $t\in T$ and $\lambda\in\Inj(T,G)$. Thus, we may consider $\Injbar(T,G)$ 
as a right $\Out(T)$-set, where $\Out(T):=\Aut(T)/\Inn(T)$ denotes the group 
of outer automorphisms of $T$. Note that, of course, $\Inj(T,G)$ is empty if 
$T$ is not isomorphic to a subgroup of $G$.

\smallskip
(b) For $T\in\calT$ consider the map
\begin{equation*}
  f\colon \Inj(T,G)\times \Inj(T,H) \to I_{G,H,T}\,, \quad
  (\lambda,\mu)\mapsto \Delta(\lambda(T),\lambda\mu^{-1},\mu(T))\,.
\end{equation*}
This map is clearly surjective, and satisfies $f(c_g\lambda\omega,c_h\mu\omega)=\lexp{(g,h)}f(\lambda,\mu)$ 
for all $g\in G$, $h\in H$ and $\omega\in\Aut(T)$. Thus, we obtain 
an induced $G\times H$-equivariant surjective map
\begin{equation*}
  \Inj(T,G)\times_{\Aut(T)}\Inj(T,H) \to I_{G,H,T}\,.
\end{equation*}
Strictly speaking we should write $\Inj(T,H)^\circ$ in order to view $\Inj(T,H)$ as a left $\Aut(T)$-set, but we prefer to keep the notation simple. It is straightforward to see that this map is also injective. Consequently 
it induces a bijection\marginpar{eqn Injbar bijection}
\begin{equation}\label{eqn Injbar bijection}
   \Injbar(T,G)\times_{\Aut(T)}\Injbar(T,H) \liso I_{G,H,T}/(G\times H)\,.
\end{equation}
Recall from (\ref{eqn IDelta bijection}) that $I_{G,H,T}$ is also in 
$G\times H$-equivariant bijection with the set of twisted diagonal 
subgroups of $G\times H$ that are isomorphic to $T$.
 
 \smallskip
(c) For $T\in\mathcal{T}$ and finite groups $G$ and $H$, we define the 
$\ZZ$-linear map
\begin{align*}
  \sigma_{G,H,T}\colon B^\Delta(G,H) & \to \Hom_{\ZZ\Out(T)}(\ZZ\Injbar(T,H),\ZZ\Injbar(T,G))\,,\\
  [X] & \mapsto \Bigl([\mu]\mapsto\sum_{[\lambda]\in\Injbar(T,G)}
  \frac{|X^{\Delta(\lambda(T),\lambda\mu^{-1},\mu(T))}|}{|C_G(\lambda(T))|} [\lambda] \Bigr)\,,
\end{align*}
where $X$ is any bifree $(G,H)$-biset.
Note that the integer $|X^{\Delta(\lambda(T),\lambda\mu^{-1},\mu(T))}|$ is divisible 
by $|C_G(U)|$, since $X$ is left-free. Since the map $\sigma_{G,H,T}([X])$ is 
defined on a $\ZZ$-basis, and since the definition does not depend on the 
choices of $\lambda$ and $\mu$ in their classes, it is a well-defined group 
homomorphism. Moreover it is easy to verify that it respects the 
$\ZZ\Aut(T)$-module structures. Collecting all these maps we obtain a map\marginpar{eqn sigmaGH}
\begin{equation}\label{eqn sigmaGH}
  \sigma_{G,H}\colon B^\Delta(G,H)\to \bigoplus_{T\in\calT} \Hom_{\ZZ\Out(T)}(\ZZ\Injbar(T,H),\ZZ\Injbar(T,G))\,.
\end{equation}

(d) Assume that $G,H\in\calD$. Then the map $\sigma_{G,H,T}$ in 
Part (c) 
restricts to a map\marginpar{eqn sigmaSGHT}
\begin{equation}\label{eqn sigmaSGHT}
  \sigma_{G,H,T}^{\calS}\colon B^{\calS}(G,H)\to \Hom_{\ZZ\Out(T)}^{\calS}(\ZZ\Injbar(T,H),\ZZ\Injbar(T,G))\,,
\end{equation}
where the latter  set consists of those $\ZZ\Out(T)$-module 
homomorphisms $f\colon\ZZ\Injbar(T,H)\to\ZZ\Injbar(T,G)$ with the property that, 
for every $\mu\in\Inj(T,H)$, the element $f([\mu])$ lies in the $\ZZ$-span 
of those standard basis elements $[\lambda]$ satisfying 
$(\lambda(T),\lambda\mu^{-1},\mu(T))\in I_{G,H}^{\calS}$. In fact, since $\calS_{G,H}$ 
is closed under $G\times H$-conjugation and under taking subgroups, we have 
$X^{\Delta(\lambda(T),\lambda\mu^{-1},\mu(T))} = \emptyset$ for $[X]\in B^{\calS}(G,H)$,
unless $\Delta(\lambda(T),\lambda\mu^{-1},\mu(T))\in\calS_{G,H}$. Moreover, 
the map $\sigma_{G,H}$ in (\ref{eqn sigmaGH}) restricts to a map\marginpar{eqn sigmaSGH}
\begin{equation}\label{eqn sigmaSGH}
  \sigma^{\calS}_{G,H}= (\sigma^{\calS}_{G,H,T})_{T\in\calT} \colon
  B^{\calS}(G,H) \to \bigoplus_{T\in\calT} \Hom_{\ZZ\Out(T)}^{\calS}(\ZZ\Injbar(T,H),\ZZ\Injbar(T,G))\,.
\end{equation}
Finally, tensoring the above maps with $R$ over $\ZZ$ yields an $R$-module homomorphism\marginpar{eqn RsigmaSGH}
\begin{equation}\label{eqn RsigmaSGH}
  \sigma^{\calS}_{G,H}= (\sigma^{\calS}_{G,H,T})_{T\in\calT} \colon
  RB^{\calS}(G,H) \to \bigoplus_{T\in\calT} \Hom_{R\Out(T)}^{\calS}(R\Injbar(T,H),R\Injbar(T,G))\,.
\end{equation}
In fact, since $\ZZ\Injbar(T,H)$ and $\ZZ\Injbar(T,G)$ are permutation 
modules, the canonical map between the tensor product of $R$ with the 
homomorphism group in (\ref{eqn sigmaSGHT}) to the latter homomorphism module is an isomorphism. 

Note also that all but finitely many summands are trivial in all the 
above direct sums running over $T\in\calT$.
\end{nothing}

\begin{nothing}\mylabel{noth tau}
{\sl The connecting map $\tau_{G,H,T}^{\calS}$.}\quad
Let $G,H\in\calD$. We define the group homomorphism\marginpar{eqn tauS}
\begin{equation}\label{eqn tauS}
  \tau^{\calS}_{G,H,T}\colon \Btilde^{\calS}_T(G,H) \to \Hom^{\calS}_{\ZZ\Out(T)}(\ZZ\Injbar(T,H),\ZZ\Injbar(T,G))\,,
\end{equation}
by setting
\begin{equation*}
  (\tau_{G,H,T}^{\calS}(a))([\mu]) := \sum_{[\lambda]\in\Injbar(T,G)} 
  a_{(\lambda(T),\lambda\mu^{-1},\mu(T))}\ [\lambda]\,,
\end{equation*}
for $a=\sum_{(U,\alpha,V)\in I^{\calS}_{G,H,T}}a_{(U,\alpha,V)}(U,\alpha,V)\in \Btilde^{\calS}_T(G,H)$ and $[\mu]\in\Injbar(T,H)$.
It is straightforward to check that this map is well defined. 
\end{nothing}

The following theorem shows that the direct sum of homomorphism groups 
in (\ref{eqn sigmaSGH}) can serve as an alternative ghost group, 
that $\sigma_{G,H}$ can serve as a mark homomorphism translating the 
tensor product construction on bisets into componentwise composition 
of homomorphisms, and that the map $\tau^{\calS}_{G,H}$ is an isomorphism 
that translates between these two constructions. 

A special case of Part~(c) of the following theorem can be derived from \cite[Th\'eor\`eme 2]{BoucJA'} or \cite[Proposition~7]{BoucJA'}, 
using the statement from the second paragraph on page~753 in \cite{BoucJA'} and 
translating our setting via duality (cf.~\ref{noth opposite biset}) to the realm of right-free bisets.

\begin{theorem}\mylabel{thm sigma}
Let $G,H,K\in\calD$ and let $R$ be a commutative ring.

\smallskip
{\rm (a)} For $T\in\calT$, $a\in B^{\calS}(G,H)$ and $b\in B^{\calS}(H,K)$, one has 
\begin{equation*}
  \sigma_{G,K,T}^{\calS}(a\cdotH b)=\sigma_{G,H,T}^{\calS}(a)\circ \sigma_{H,K,T}^{\calS}(b)\,.
\end{equation*}

\smallskip
{\rm (b)} The group homomorphism $\sigma_{G,H}^{\calS}$ in (\ref{eqn sigmaSGH}) 
is injective with finite cokernel. If $|G\times H|$ is invertible in $R$ 
then the induced $R$-module homomorphism in (\ref{eqn RsigmaSGH}) is an isomorphism.

\smallskip
{\rm (c)} The map 
\begin{equation*}
  \sigma_{G,G}^{\calS}\colon B^{\calS}(G,G)\to \prod_{T\in\calT} \End^{\calS}_{\ZZ\Out(T)}(\ZZ\Injbar(T,G))
\end{equation*}
is an injective ring homomorphism with image of finite index, where the 
multiplication in the codomain is given by componentwise composition. 
If $|G|$ is invertible in $R$ then the induced map
\begin{equation*}
  \sigma_{G,G}^{\calS}\colon RB^{\calS}(G,G)\to \prod_{T\in\calT}\End^{\calS}_{R\Out(T)}(R\Injbar(T,G))
\end{equation*}
is an $R$-algebra isomorphism.

\smallskip
{\rm (d)} For $T\in\calT$, the map $\tau^{\calS}_{G,H,T}$ is a group isomorphism and it satisfies 
\begin{equation*}
  \tau^{\calS}_{G,H,T}\circ\rho^{\calS}_{G,H,T} = \sigma^{\calS}_{G,H,T}\,.
\end{equation*}
In particular, the diagram
\begin{diagram}[80]
  & & B^{\calS}(G,H) & & &&
  & \movearrow(-10,0){\Swar[40]{\rho^{\calS}_{G,H}}} & & 
        \movearrow(10,0){\Sear[40]{\sigma^{\calS}_{G,H}}} & &&
  \movevertex(-20,-10){\mathop{\bigoplus}\limits_{T\in\calT} \Btilde^{\calS}(G,H)} &  
        &\movearrow(-10, -2){\Ear[70]{(\tau^{\calS}_{G,H,T})}} & &
        \movevertex(60,-10){\mathop{\bigoplus}\limits_{T\in\calT}
         \Hom^{\calS}_{\ZZ\Out(T)}(\ZZ\Injbar(T,H),\ZZ\Injbar(T,G))} &&
\end{diagram}
is commutative. Moreover, for $T\in\calT$, $a\in \Btilde^{\calS}_T(G,H)$, and $b\in\Btilde^{\calS}_T(H,K)$, one has
\begin{equation*}
  \tau^{\calS}_{G,K,T}(a\cdotH b)=\tau^{\calS}_{G,H,T}(a)\circ\tau^{\calS}_{H,K,T}(b)\,.
\end{equation*}
\end{theorem}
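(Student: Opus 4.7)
The plan is to prove the statements in the order (d) first, then (a), then deduce (b) and (c). The only serious work is the identification of $\tau^{\calS}_{G,H,T}$ as an isomorphism and the multiplicativity identity for $\sigma^{\calS}$ in part~(a); everything else will transfer along $\tau$ from Theorem~\ref{thm mark 1}.

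First I would show that $\tau^{\calS}_{G,H,T}$ is a $\ZZ$-module isomorphism by matching bases. The source $\Btilde^{\calS}_T(G,H)$ has its standard basis of orbit sums $[U,\alpha,V]_{G\times H}^+$ indexed by the $G\times H$-orbits on $I^{\calS}_{G,H,T}$. On the target side, a $\ZZ\Out(T)$-module homomorphism between the two permutation modules is determined by its values on orbit representatives, and the standard description gives a $\ZZ$-basis indexed by the $\Out(T)$-orbits on $\Injbar(T,G)\times\Injbar(T,H)$. Since $\Inn(T)$ acts trivially on both $\Injbar$-sets, these coincide with the $\Aut(T)$-orbits, and the $G\times H$-equivariant bijection~(\ref{eqn Injbar bijection}) identifies the latter with the $G\times H$-orbits on $I_{G,H,T}$; the $\calS$-admissibility condition on the Hom side matches the $\calS$-restriction on the ghost side because $\calS_{G,H}$ is closed under $G\times H$-conjugation. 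A direct check from the definition of $\tau^{\calS}_{G,H,T}$ shows it realizes this bijection on basis elements. The identity $\tau^{\calS}_{G,H,T}\circ\rho^{\calS}_{G,H,T}=\sigma^{\calS}_{G,H,T}$ is then obtained by unfolding the definitions on $[X]$ and $[\mu]$: both sides evaluate to $\sum_{[\lambda]}|X^{\Delta(\lambda(T),\lambda\mu^{-1},\mu(T))}|\cdot|C_G(\lambda(T))|^{-1}\,[\lambda]$.

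Part~(a) is the main technical step. Both sides of the claimed identity are $\ZZ\Out(T)$-module homomorphisms, so it suffices to evaluate at an arbitrary $[\nu]\in\Injbar(T,K)$ and compare coefficients of each $[\lambda]\in\Injbar(T,G)$ for $a=[X]$, $b=[Y]$. Setting $U=\lambda(T)$, $W=\nu(T)$ and $\gamma=\lambda\nu^{-1}\in I(U,W)$, the left-hand coefficient equals $|(X\times_H Y)^{\Delta(U,\gamma,W)}|/|C_G(U)|$; the right-hand coefficient is a sum over $[\mu]\in\Injbar(T,H)$ of products of fixed-point counts of $X$ and $Y$, normalized by $|C_G(U)|\cdot|C_H(\mu(T))|$. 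Reorganizing the sum by the $H$-conjugacy class of $V=\mu(T)$ and by the factorizations $(\alpha,\beta)=(\lambda\mu^{-1},\mu\nu^{-1})$ of $\gamma$, the identity reduces to the second counting formula in Theorem~\ref{thm fixed points 2}. The bookkeeping that makes the factors $|N_H(V)|^{-1}$ in that formula agree with the $|C_H(\mu(T))|^{-1}$ appearing here is where care is needed; this combinatorial balancing is the principal obstacle in the proof.

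The remaining multiplicativity assertion for $\tau$ in part~(d) now follows by a clean rational argument. Both sides of $\tau(a\cdot_H b)=\tau(a)\circ\tau(b)$ lie in free $\ZZ$-modules, so it suffices to check the equality after tensoring with $\QQ$. By Theorem~\ref{thm mark 1}(b) and the decomposition~(\ref{eqn RAT decomposition}), $\rho^{\calS}_{G,H,T}$ is surjective over $\QQ$, so one writes $a=\rho(a')$ and $b=\rho(b')$; the chain
\[
\tau(a\cdot_H b)=\tau(\rho(a')\cdot_H\rho(b'))=\tau(\rho(a'\cdot_H b'))=\sigma(a'\cdot_H b')=\sigma(a')\circ\sigma(b')=\tau(a)\circ\tau(b)
\]
uses Theorem~\ref{thm mark 1}(a), $\tau\circ\rho=\sigma$, and part~(a). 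Parts (b) and (c) are then immediate: writing $\sigma^{\calS}_{G,H}=\bigl(\bigoplus_T\tau^{\calS}_{G,H,T}\bigr)\circ\rho^{\calS}_{G,H}$ and using that the first factor is a $\ZZ$-module isomorphism, the injectivity and finite-cokernel statements transfer verbatim from Theorem~\ref{thm mark 1}(b); for (c), part~(a) specialized to $G=H=K$ shows that $\sigma^{\calS}_{G,G}$ is a ring homomorphism with the identity element mapping to the componentwise identity, and the $R$-algebra isomorphism under inversion of $|G|$ follows by scalar extension.
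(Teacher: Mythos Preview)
Your proposal is correct and follows essentially the same strategy as the paper: the isomorphism in (d) via the bijection~(\ref{eqn Injbar bijection}), the factorization $\sigma=\tau\circ\rho$ by unfolding definitions, the multiplicativity of $\tau$ by the rational lifting argument through $\rho$, and then (b) and (c) transferred from Theorem~\ref{thm mark 1}. The paper orders things slightly differently (it proves (a) before (d), and for the isomorphism in (d) it writes down an explicit inverse rather than matching bases), but these are cosmetic.

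The one place where the paper's execution is cleaner than what you sketch is the bookkeeping in~(a). Rather than the second counting formula of Theorem~\ref{thm fixed points 2} and the balancing of $|N_H(V)|^{-1}$ against $|C_H(\mu(T))|^{-1}$ that you flag as delicate, the paper uses the \emph{first} formula (the unrestricted sum over $\Gamma_H$ with factor $|H|^{-1}$) together with the observation that, for fixed $\lambda$ and $\nu$, the map $\mu\mapsto(\lambda\mu^{-1},\mu(T),\mu\nu^{-1})$ is a bijection $\Inj(T,H)\to\Gamma_H(\lambda(T),\lambda\nu^{-1},\nu(T))$ with $\stab_H(\mu)=C_H(\mu(T))$; the passage from a sum over $\Inj(T,H)$ with weight $|H|^{-1}$ to a sum over $\Injbar(T,H)$ with weight $|C_H(\mu(T))|^{-1}$ is then a single orbit-counting step, and the balancing issue disappears.
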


\begin{proof}
(a) We may assume that $a=[X]$ and $b=[Y]$ for bifree bisets $X$ and $Y$. 
Let $\nu\in\Inj(T,K)$. Then, by Theorem~\ref{thm fixed points 2}, we obtain
\begin{align*}
  &\   \sigma^{\calS}_{G,K,T}([X]\cdotH [Y])([\nu]) = \sum_{[\lambda]\in\Injbar(T,G)} 
  \frac{|(X\times_H Y)^{\Delta(\lambda(T),\lambda\nu^{-1},\nu(T))}|}{|C_G(\lambda(T))|} [\lambda]\\
  = &\ \sum_{[\lambda]\in\Injbar(T,G)} \frac{1}{|H|} 
  \sum_{(\alpha,V,\beta)\in\Gamma_H(\lambda(T),\lambda\nu^{-1},\nu(T))}
  \frac{|X^{\Delta(\lambda(T),\alpha,V)}|\cdot|Y^{\Delta(V,\beta,\nu(T))}|}
  {|C_G(\lambda(T))} [\lambda] \\
  = &\ \sum_{[\lambda]\in\Injbar(T,G)} \sum_{[\mu]\in\Injbar(T,H)} 
  \frac{|X^{\Delta(\lambda(T),\lambda\mu^{-1},\mu(T))}|\cdot|Y^{\Delta(\mu(T),\mu\nu^{-1},\nu(T))}|}
  {|C_G(\lambda(T))|\cdot|C_H(\mu(T))|}[\lambda]\\
  = &\ \sigma^{\calS}_{G,H,T}([X])\bigl(\sum_{[\mu]\in\Injbar(T,H)} 
  \frac{|Y^{\Delta(\mu(T),\mu\nu^{-1},\nu(T))}|}{|C_H(\mu(T))|} [\mu] \bigr) 
  = (\sigma^{\calS}_{G,H,T}([X])\circ\sigma^{\calS}_{H,K,T}([Y]))([\nu])\,.
\end{align*}
Here we used that, for a fixed $\lambda\in\Inj(T,G)$, the map 
$\mu\mapsto (\lambda\mu^{-1},\mu(T),\mu\nu^{-1})$ defines a bijection between 
$\Inj(T,H)$ and $\Gamma_H(\lambda(T),\lambda\nu^{-1},\nu(T))$ with inverse 
$(\alpha,V,\beta)\mapsto \alpha^{-1}\lambda$, and that $\stab_H(\mu)=C_H(\mu(T))$.

\smallskip
(d) We prove Part~(d) before we prove Parts~(b) and (c). Define the map
\begin{equation*}
  {\tau'}^{\calS}_{G,H,T}\colon \Hom^{\calS}_{\ZZ\Out(T)}(\ZZ\Injbar(T,H),\ZZ\Injbar(T,G)) \to 
  \Btilde^{\calS}_T(G,H)
\end{equation*}
as follows: if $f\in\Hom^{\calS}_{\ZZ\Out(T)}(\ZZ\Injbar(T,H),\ZZ\Injbar(T,G))$ 
is a $\ZZ\Out(T)$-homomorphism and if $f$ is represented by the integral 
matrix $(a_{[\lambda],[\mu]})$ with respect to the standard bases $\Injbar(T,G)$ and $\Injbar(T,H)$ then we set
\begin{equation*}
  {\tau'}^{\calS}_{G,H,T}(f):=\mathop{\sum_{\lambda\times_{\Aut(T)}\mu\in}}_{\Inj(T,G)\times_{\Aut(T)}\Inj(T,H)} 
   a_{[\lambda],[\mu]}\  (\lambda(T),\lambda\mu^{-1},\mu(T))\,.
\end{equation*}
Note that $a_{[\lambda],[\mu]}=0$, unless $(\lambda(T),\lambda\mu^{-1},\mu(T))\in I^\calS_{G,H,T}$.
Also note that, since $f$ is a $\ZZ\Out(T)$-module homomorphism, 
one has $a_{[\lambda\omega],[\mu\omega]} = a_{[\lambda],[\mu]}$, for $\omega\in\Aut(T)$. A 
straightforward verification shows that $\tau^{\calS}_{G,H,T}$ and ${\tau'}^{\calS}_{G,H,T}$ 
are inverses of each other. Moreover, by the very definitions of 
$\rho^{\calS}_{G,H,T}$, $\sigma^{\calS}_{G,H,T}$ and $\tau^{\calS}_{G,H,T}$, 
we see that $\tau^{\calS}_{G,H,T}\circ\rho^{\calS}_{G,H,T}=\sigma^{\calS}_{G,H,T}$. 

For the last statement of Part~(d), we apply Theorem~\ref{thm mark 1}(b) 
to choose $a'\in\QQ B^{\calS}(G,H)$ and $b'\in\QQ B^{\calS}(H,K)$ satisfying 
$\rho^{\calS}_{G,H,T}(a')=a$ and $\rho^{\calS}_{H,K,T}(b')=b$. Then, using 
the second statement of Part~(d), Part~(a), and Theorem~\ref{thm mark 1}(a), we have
\begin{align*}
  & \tau^{\calS}_{G,H,T}(a)\circ\tau^{\calS}_{H,K,T}(b) 
  = \sigma^{\calS}_{G,H,T}(a')\circ\sigma^{\calS}_{H,K,T}(b') = \sigma^{\calS}_{G,K,T}(a'\cdotH b') \\
  = & (\tau^{\calS}_{G,K,T}\circ\rho^{\calS}_{G,K,T})(a'\cdotH b')
  = \tau^{\calS}_{G,K,T} (\rho^{\calS}_{G,H,T}(a')\cdotH\rho^{\calS}_{H,K,T}(b'))
  = \tau^{\calS}_{G,K,T} (a\cdotH b)\,.
\end{align*}

\smallskip
(b) This follows immediately from the corresponding statement for $\rho^{\calS}_{G,H}$ in 
Theorem~\ref{thm mark 1}(b), the commutativity of the triangle diagram in Part~(d), 
and the fact that $\tau^{\calS}_{G,H,T}$ is an isomorphism for all $T\in\calT$.

\smallskip
(c) The map $\sigma_{G,G}^{\calS}$ is a ring homomorphism by Part~(a). The remaining 
statements follow immediately from Part~(b).
\end{proof}

\begin{nothing}\mylabel{noth sigmatilde}
{\sl The mark homomorphism $\sigmatilde_G^{\calS}$.}\quad
Next we consider the case where $G=H$ more closely. For $G\in\calD$ 
we define a category $\calS_G$ whose objects are the subgroups of 
$G$ and where any morphism set $\Hom_{\calS_G}(V,U)$ is defined as the set 
of all group homomorphisms $\phi\colon V\to U$ such that 
$\{(\phi(v),v))\mid v\in V\}\in\calS_{G,G}$. Note that automatically each 
such $\phi$ is injective, since $\calS_{G,G}\subseteq\Delta_{G,G}$. 
The conditions in Hypothesis~\ref{hyp S1} imply that 
this is in fact a category with the usual composition of homomorphisms, and 
that every conjugation map $c_g\colon V\to U$, for $g\in G$, between 
subgroups $V$ and $U$ of $G$ is a morphism in this category. 
We denote by $\tilde{\calS}_G$ a set of representatives of the isomorphism 
classes of objects of $\calS_G$. For $U\le G$ we set 
$\Aut_{\calS_G}(U):=\Hom_{\calS_G}(U,U)$ and $\Out_{\calS_G}(U):=\Aut_{\calS_G}(U)/\Inn(U)$. 
Moreover we set $\Hombar_{\calS_G}(U,V):=\Inn(V)\backslash\Hom_{\calS_G}(U,V)$. This 
set can be considered as a right $\Out_{\calS_G}(U)$-set by composition. Thus, 
specializing to $V=G$, we obtain a permutation $\ZZ\Out_{\calS_G}(U)$-module 
$\ZZ\Hombar_{\calS_G}(U,G)$ and its endomorphism ring\marginpar{eqn End algebra}
\begin{equation}\label{eqn End algebra}
  \End_{\ZZ\Out_{\calS_G}(U)}(\ZZ\Hombar_{\calS_G}(U,G))\,.
\end{equation}
Similarly as in \ref{noth Inj}(c), we obtain, for every $U\le G$, a 
well-defined group homomorphism
\begin{align*}
  \sigmatilde^{\calS}_{G,U}\colon B^{\calS}(G,G) & \to 
                        \End_{\ZZ\Out_{\calS_G}(U)}(\ZZ\Hombar_{\calS_G}(U,G))\,,\\
  [X] & \mapsto \Bigl([\psi]\mapsto \sum_{[\phi]\in\Hombar_{\calS_G}(U,G)} 
                     \frac{|X^{\Delta(\phi(U),\phi\psi^{-1},\psi(U))}|}{|C_G(\phi(U))|} [\phi] \Bigr)\,.
\end{align*}
The collection of these maps, for $U\in\tilde{\calS}_G$, defines a group homomorphism\marginpar{eqn sigmatildeSG}
\begin{equation}\label{eqn sigmatildeSG}
  \sigmatilde^{\calS}_G\colon B^{\calS}(G,G) \to 
                 \bigoplus_{U\in\tilde{\calS}_G}      \End_{\ZZ\Out_{\calS_G}(U)}(\ZZ\Hombar_{\calS_G}(U,G))
\end{equation}
which induces an $R$-module homomorphism\marginpar{eqn sigmatildeRSG}
\begin{equation}\label{eqn sigmatildeRSG}
  \sigmatilde^{\calS}_G\colon RB^{\calS}(G,G) \to 
                 \bigoplus_{U\in\tilde{\calS}_G}      \End_{R\Out_{\calS_G}(U)}(R\Hombar_{\calS_G}(U,G))\,.
\end{equation}
\end{nothing}

\begin{theorem}\mylabel{thm sigmatilde}
Let $G\in\calD$ and assume the notation from \ref{noth sigmatilde}.

{\rm (a)} The map in (\ref{eqn sigmatildeSG}) is an injective ring homomorphism with image of finite index.

{\rm (b)} If $|G|$ is a unit in $R$ then the map in (\ref{eqn sigmatildeRSG}) is an isomorphism of $R$-algebras.
\end{theorem}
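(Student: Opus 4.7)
My plan is to mimic the structure of the proof of Theorem~\ref{thm sigma} in the setting of the category $\calS_G$, by decomposing $\Btilde^{\calS}(G,G)$ along $\calS_G$-isomorphism classes of subgroups of $G$ and factoring $\sigmatilde^{\calS}_G$ through the mark homomorphism $\rho^{\calS}_{G,G}$ of Theorem~\ref{thm mark 1}. Since $\rho^{\calS}_{G,G}$ is already known to be an injective ring homomorphism with finite cokernel (becoming an isomorphism after tensoring with $R$ when $|G|$ is invertible), it will suffice to identify $\sigmatilde^{\calS}_G$ with the composition of $\rho^{\calS}_{G,G}$ and a $\ZZ$-linear isomorphism on each component.

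\textbf{The decomposition and the map $\tilde\tau$.} First I would refine the decomposition in (\ref{eqn RAT decomposition}): let $\Btilde^{\calS}_U(G,G)$ denote the $\ZZ$-span of those basis elements $[W,\alpha,V]_{G\times G}^+$ with both $W$ and $V$ isomorphic to $U$ in the category $\calS_G$. By Lemma~\ref{lem multiplication formula}(b), this gives a decomposition $\Btilde^{\calS}(G,G)=\bigoplus_{U\in\tilde{\calS}_G}\Btilde^{\calS}_U(G,G)$ as a direct sum of subrings (cross products vanish). Then, for each $U\in\tilde{\calS}_G$, I would define a $\ZZ$-linear map
\begin{equation*}
  \tilde\tau^{\calS}_{G,U}\colon \Btilde^{\calS}_U(G,G)\to \End_{\ZZ\Out_{\calS_G}(U)}(\ZZ\Hombar_{\calS_G}(U,G))
\end{equation*}
whose matrix entry at $([\phi],[\psi])$ is the coefficient of the basis element $(\phi(U),\phi\psi^{-1},\psi(U))$, and show it is an isomorphism via a $G\times G$-equivariant bijection
\begin{equation*}
  \Hom_{\calS_G}(U,G)\times_{\Aut_{\calS_G}(U)}\Hom_{\calS_G}(U,G)\liso I^{\calS,U}_{G,G},\quad (\phi,\psi)\mapsto(\phi(U),\phi\psi^{-1},\psi(U)),
\end{equation*}
where $I^{\calS,U}_{G,G}$ denotes the set of triples in $I^{\calS}_{G,G}$ whose outer components are $\calS_G$-isomorphic to $U$. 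Surjectivity reduces to producing $\calS_G$-morphisms $U\to W$ and $U\to V$ with the required composition, and then applying Hypothesis~\ref{hyp S1}(iii).

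\textbf{Multiplicativity and the intertwining.} Next I would verify that $\sigmatilde^{\calS}_G$ is a ring homomorphism by repeating the computation in the proof of Theorem~\ref{thm sigma}(a), substituting $\Hombar_{\calS_G}(U,G)$ for $\Injbar(T,G)$ and invoking Theorem~\ref{thm fixed points 2}; the terms on both sides whose graphs fail to lie in $\calS_{G,G}$ simply vanish, so the identity of sums still holds. A direct comparison of the defining formulas of $\sigmatilde^{\calS}_{G,U}$ and $\rho^{\calS}_{G,G}$ then yields
\begin{equation*}
  \sigmatilde^{\calS}_{G,U}=\tilde\tau^{\calS}_{G,U}\circ (\rho^{\calS}_{G,G})_U,
\end{equation*}
where $(\rho^{\calS}_{G,G})_U$ is the $\Btilde^{\calS}_U(G,G)$-component of $\rho^{\calS}_{G,G}$. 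Both parts (a) and (b) of the theorem then follow from the corresponding statements for $\rho^{\calS}_{G,G}$ in Theorem~\ref{thm mark 1}(c): injectivity, finite cokernel, and (after tensoring with $R$ when $|G|$ is invertible) the $R$-algebra isomorphism property all transfer from $\rho^{\calS}_{G,G}$ to $\sigmatilde^{\calS}_G$ componentwise via the isomorphisms $\tilde\tau^{\calS}_{G,U}$.

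\textbf{Main obstacle.} I expect the hardest step to be the injectivity half of the bijection in the definition of $\tilde\tau^{\calS}_{G,U}$: if $(\phi(U),\phi\psi^{-1},\psi(U))$ and $(\phi'(U),\phi'\psi'^{-1},\psi'(U))$ are $G\times G$-conjugate, one must produce an $\omega\in\Aut_{\calS_G}(U)$ witnessing $[\phi']=[\phi\omega^{-1}]$ and $[\psi']=[\psi\omega^{-1}]$. Because Condition~(II) in Hypothesis~\ref{hyp S1} is not assumed, the candidate abstract group automorphism $\omega$ of $U$ arising from this data does not obviously lie in $\Aut_{\calS_G}(U)$; establishing this will require a careful $*$-product argument based on closure of $\calS_{G,G}$ under $G\times G$-conjugation and subgroups, together with the observation that $\Aut_{\calS_G}(U)$ is a finite cancellative monoid and hence already a group.
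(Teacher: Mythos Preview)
Your approach is essentially the paper's: both rest on the parametrizing map
\[
  \Hom_{\calS_G}(U,G)\times_{\Aut_{\calS_G}(U)}\Hom_{\calS_G}(U,G)\to\calS_{G,G},\qquad
  (\phi,\psi)\mapsto\Delta(\phi(U),\phi\psi^{-1},\psi(U)),
\]
and both obtain multiplicativity by rerunning the computation in the proof of Theorem~\ref{thm sigma}(a) with $\Hombar_{\calS_G}(U,G)$ in place of $\Injbar(T,G)$. The difference is only organizational. You build an explicit isomorphism $\tilde\tau^{\calS}_{G,U}$ and factor $\sigmatilde^{\calS}_{G,U}=\tilde\tau^{\calS}_{G,U}\circ(\rho^{\calS}_{G,G})_U$, then quote Theorem~\ref{thm mark 1}(c). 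The paper instead argues directly from the bijection~(\ref{eqn Hombar bijection}): its surjectivity together with Proposition~\ref{prop bifree constructions}(c) gives injectivity of $\sigmatilde^{\calS}_G$, and the matching of standard bases it supplies shows that $\sigmatilde^{\calS}_G$ is represented by an upper triangular matrix with diagonal entries $[N_{G\times G}(L):L]/|C_G(p_1(L))|$, whence both the finite-index statement and, when $|G|\in R^\times$, the isomorphism. Your route reaches the same conclusions with slightly more scaffolding (you must also check that each $\tilde\tau^{\calS}_{G,U}$ is multiplicative, which the paper's direct argument avoids).

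Regarding your ``main obstacle'': the paper simply records the bijection~(\ref{eqn Hombar bijection}) as straightforward and supplies no further argument. Your observation that $\Aut_{\calS_G}(U)$, being a finite submonoid of the group $\Aut(U)$, is automatically a group is correct, but note that by itself it does not place the candidate $\omega=\phi^{-1}\phi'$ inside $\Aut_{\calS_G}(U)$. More to the point, you have flagged the wrong half of the map: the place where a symmetry condition genuinely enters is already \emph{well-definedness}, since
\[
  \Delta(\phi(U),\phi\psi^{-1},\psi(U))=\Delta(\phi(U),\phi,U)*\Delta(\psi(U),\psi,U)^\circ,
\]
and it is the membership of this element in $\calS_{G,G}$---not the injectivity step you describe---that requires care when Condition~(II) is not assumed. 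In every application the paper makes of Theorem~\ref{thm sigmatilde} (most notably Section~\ref{section fusion systems}), Condition~(II) holds and the point is moot.
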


\begin{proof}
(a) First we show that $\sigmatilde^{\calS}_G$ is injective.
Assume that $a\in B^{\calS}(G,G)$ is such that $\sigmatilde^{\calS}_G(a)=0$. Then 
$\Phi_L(a)=0$ for all $L=\Delta(\phi(U),\phi\psi^{-1},\psi(U))$, where 
$U\in\tilde{\calS}_G$, and $\phi,\psi\in\Hom_{\calS_G}(U,G)$. However, it is 
straightforward to show that the $G\times G$-equivariant maps\marginpar{eqn fU map}
\begin{equation}\label{eqn fU map}
  \Hom_{\calS_G}(U,G)\times_{\Aut_{\calS_G}(U)} \Hom_{\calS_G}(U,G)\to 
  \calS_{G,G}\,,\quad  \phi\times_{\Aut_{\calS_G}(U)}\psi\mapsto\Delta(\phi(U),\phi\psi^{-1},\psi(U))\,,
\end{equation}
induce a bijection\marginpar{eqn Hombar bijection}
\begin{equation}\label{eqn Hombar bijection}
  \coprod_{U\in\tilde{\calS}_G} \Hombar_{\calS_G}(U,G)\times_{\Aut_{\calS_G}(U)} 
  \Hombar_{\calS_G}(U,G)\to \calS_{G,G}/(G\times G)
\end{equation}
on the disjoint union, cf.~(\ref{eqn Injbar bijection}). By the 
surjectivity of the map in (\ref{eqn Hombar bijection}) and by 
Proposition~\ref{prop bifree constructions}(c), we obtain $a=0$. Thus, 
$\sigmatilde_G^{\calS}$ is injective. 

We still need to show that $\sigmatilde_G^{\calS}$ is multiplicative. 
But this is a straightforward variation of the proof of Part~(a) in Theorem~\ref{thm sigma}.

(b) Note that the endomorphism ring in 
(\ref{eqn End algebra}) is isomorphic to the set of integral matrices 
$(a_{[\phi],[\psi]})$ with rows and colums indexed by $\Hombar_{\calS_G}(U,G)$ 
with the property that $a_{[\phi\alpha],[\psi\alpha]}=a_{[\phi],[\psi]}$ for all 
$\phi,\psi\in\Hom_{\calS_G}(U,G)$ and all $\alpha\in\Aut_{\calS_G}(U)$. Therefore, 
it has a standard basis indexed by the elements of the $U$-component in the 
coproduct in (\ref{eqn Hombar bijection}). Using the bijection in 
(\ref{eqn Hombar bijection}) and Proposition~\ref{prop bifree constructions}(c), 
it follows that, with respect to suitable bases, the map $\sigmatilde^{\calS}_G$ 
is represented by an upper triangular square matrix with diagonal entries equal to 
$[N_{G\times G}(L):L]/|C_G(p_1(L))|$, where $L$ runs through a transversal of $\calS_{G,G}/(G\times G)$.
This proves Part (b).
\end{proof}

Since endomorphism rings of semisimple artinian modules are semisimple, the 
following corollary is an immediate consequence of 
Theorem~\ref{thm sigmatilde}(b) and Maschke's Theorem.
Independent proofs for the semisimplicity of $RB^\Delta(G,G)$,
for a field $R$ of characteristic 0,
can for instance be found in \cite[Corollaire 7]{BoucJA'} and \cite[Theorem 9.6(1)]{W}.

\begin{corollary}\label{cor RBS semisimple}
Let $G\in\calD$ and let $R$ be a field such that the numbers $|G|$ and 
$|\Out_{\calS_G}(U)|$ are invertible in $R$ for all $U\in\tilde{\calS}_G$. 
Then the $R$-algebra $RB^{\calS}(G,G)$ is semisimple. The isomorphism classes 
of simple $RB^{\calS}(G,G)$-modules are in bijective correspondence with the pairs 
$(U,[V])$, where $U\in\tilde{\calS}_G$ and $[V]$ is the isomorphism class of a 
simple $R\Out_{\calS_G}(U)$-module $V$ that occurs as a direct summand in the 
permutation module $R\Hombar_{\calS_G}(U,G)$.
\end{corollary}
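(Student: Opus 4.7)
The plan is to transfer the problem across the algebra isomorphism provided by Theorem~\ref{thm sigmatilde}(b). Since $|G|$ is a unit in $R$, that theorem identifies $RB^{\calS}(G,G)$ with the finite direct product
\[
  \prod_{U\in\tilde{\calS}_G} \End_{R\Out_{\calS_G}(U)}(M_U), \qquad M_U:=R\Hombar_{\calS_G}(U,G),
\]
so both the semisimplicity and the classification of simple modules can be established factor by factor.

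Next I would invoke Maschke's theorem for each of the finite groups $\Out_{\calS_G}(U)$. The hypothesis that $|\Out_{\calS_G}(U)|$ is invertible in $R$ ensures that $R\Out_{\calS_G}(U)$ is a semisimple $R$-algebra, so the permutation module $M_U$ decomposes as a finite direct sum of simple $R\Out_{\calS_G}(U)$-modules. Writing this decomposition as $M_U\cong\bigoplus_{[V]} V^{\oplus m_{U,V}}$, where $[V]$ ranges over the isomorphism classes of simple $R\Out_{\calS_G}(U)$-modules, Schur's lemma and the standard Artin--Wedderburn computation give
\[
  \End_{R\Out_{\calS_G}(U)}(M_U) \;\cong\; \prod_{[V]\,:\,m_{U,V}\ge 1} \mathrm{Mat}_{m_{U,V}}(D_V),
\]
where $D_V:=\End_{R\Out_{\calS_G}(U)}(V)$ is a division ring. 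This is a finite product of simple artinian rings, hence semisimple; taking the product over $U\in\tilde{\calS}_G$ shows that $RB^{\calS}(G,G)$ is semisimple.

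For the classification of simples, each matrix ring $\mathrm{Mat}_{m_{U,V}}(D_V)$ with $m_{U,V}\ge 1$ has, up to isomorphism, a unique simple module, and the simple modules of a finite direct product of rings are precisely the simple modules of the individual factors. Hence the isomorphism classes of simple $RB^{\calS}(G,G)$-modules are in natural bijection with the pairs $(U,[V])$ with $U\in\tilde{\calS}_G$ and $[V]$ an isomorphism class of simple $R\Out_{\calS_G}(U)$-modules with $m_{U,V}\ge 1$, i.e., occurring as a direct summand of $R\Hombar_{\calS_G}(U,G)$. There is no genuine obstacle: once Theorem~\ref{thm sigmatilde}(b) is in hand, everything reduces to standard semisimple module theory, and the only point to keep track of is the condition $m_{U,V}\ge 1$, which is exactly the "occurs as a direct summand" condition in the statement.
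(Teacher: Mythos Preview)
Your proof is correct and follows essentially the same route as the paper: the paper states that the corollary is an immediate consequence of Theorem~\ref{thm sigmatilde}(b) and Maschke's Theorem, since endomorphism rings of semisimple modules are semisimple. Your version simply makes the Artin--Wedderburn decomposition explicit, which has the added benefit of directly yielding the classification of simple modules.
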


\begin{remark}
Assume again the notation from \ref{noth sigmatilde}. We mention, without proof, 
that, for every $T\in\calT$, there exists a ring isomorphism
\begin{equation*}
  \tautilde^{\calS}_{G,T}\colon \End^{\calS}_{\ZZ\Out(T)}(\ZZ\Injbar(T,G)) \to 
  \bigoplus_{U\in\tilde{\calS}_G(T)} \End_{\ZZ\Out_{\calS_G}(U)}(\ZZ\Hombar_{\calS_G}(U,G))
\end{equation*}
with the property that
\begin{equation*}
  (\tautilde^{\calS}_{G,T})_{T\in\calT} \circ \sigma^{\calS}_{G,G} = \sigmatilde^{\calS}_G\,.
\end{equation*}
In the above direct sum, $\tilde{\calS}_G(T)$ denotes a transversal of
the $\calS_G$-isomorphism
classes of $\Sigma_G(T)$, the set of subgroups of $G$ that are abstractly isomorphic to $T$.
The homomorphism $\tautilde^{\calS}_{G,T}$ is defined as follows: if 
$f\in\End^{\calS}_{\ZZ\Out(T)}(\ZZ\Injbar(T,G))$ is represented by the matrix 
$(a_{[\lambda],[\mu]})$, then the $U$-component of $\tautilde^{\calS}_{G,T}(f)$ 
is represented by the matrix $(b_{[\phi],[\psi]})$, where 
$b_{[\phi],[\psi]}:=a_{[\phi\circ\theta],[\psi\circ\theta]}$ for some fixed isomorphism 
$\theta\colon T\liso U$. This definition is independent of the choice of $\theta$. 
We leave the verifications of the properties of $\tautilde^{\calS}_{G,T}$ to the reader.
\end{remark}

\begin{remark}\mylabel{rem Mackey semisimple}
Assume now that $\calD$ is a set.
Moreover, for simplicity, assume that for every $G\in\calD$ and every subgroup 
$U\le G$ there exists some $H\in\calD$ with $H\cong U$, and assume that 
$\calS_{G,H}=\Delta_{G,H}$ for all $G,H\in\calD$. Assume further that $R$ is 
a field such that $|G|$ and $|\Out(G)|$ are invertible in $R$, for every 
$G\in\calD$. By \ref{noth biset functors}, the module category 
$\lModstar{A^{\calD,\calS}_R}$ is then equivalent to the category of global
Mackey functors on $\calD$ over $R$. Note that, by Theorem~\ref{thm sigma}, 
the collection of the maps  $\sigma^{\calS}_{G,H}$, $G,H\in\calD$, 
defines an isomorphism of $R$-algebras
\begin{equation*}
  \sigma\colon A^{\calD,\calS}_R\liso \bigoplus_{T\in\calT} \bigoplus_{G,H\in\calD} 
  \Hom_{R\Out(T)}(R\Injbar(T,H),R\Injbar(T,G))\,,
\end{equation*}
where the latter direct sum has componentwise multiplication with respect to $T$. 
Thus, the latter algebra is the direct sum of the $R$-algebras 
\begin{equation*}
  \Attilde^{\calD,\calS}_{T,R}:=\bigoplus_{G,H\in\calD}  \Hom_{R\Out(T)}(R\Injbar(T,H),R\Injbar(T,G))
\end{equation*}
with multiplication of two components given by composition of homomorphisms 
if they are composable, and by the $0$-product otherwise. From this point 
of view one could quickly show that the category $\Func^{\calD,\calS}_R$ of global
Mackey functors on $\calD$ over $R$ is semisimple and that the isomorphism 
classes of simple objects in $\Func^{\calD,\calS}_R$ are parametrized by pairs 
$(T,[V])$, where $T\in\calT$ such that $\calD$ contains a group isomorphic 
to $T$, and were $[V]$ is the isomorphism class of a simple $R\Out(T)$-module 
$V$, cf.~\cite[Section~9]{W}.
\end{remark}


\section{The multiplicative structure of $\Btilde^\trl(G,G)$}\mylabel{section Btrltilde}

In this section we will study the structure of the ghost ring $\Btilde^\trl(G,G)$. 
We show that it has a natural grading $\Btilde^{\trl}(G,G)=\bigoplus_{n\ge0}\Btilde^\trl_n(G,G)$ 
and that the ideal $\bigoplus_{n\ge1}\Btilde^\trl_n(G,G)$ is nilpotent. This allows 
us to fully understand the simple modules of $R\Btilde^\trl(G,G)$,
provided that $R$ is a field such that $|G|$ and $|\Out(U)|$, for
all $U\le G$, are invertible in $R$. These simple modules are then the 
same as those for $R\Btilde^\Delta(G,G)$.

Again, we will prove more general results, by considering the following
situation:
assume that $\calD$ is a class of finite groups and 
that, for each choice of $G,H\in\calD$, we are given a subset 
$\calS_{G,H}\subseteq\trl_{G,H}$ of subgroups of $G\times H$ satisfying Condition (I) 
in Hypothesis~\ref{hyp S1}. With these assumptions, $B^{\calS}(G,H)$ is a subgroup 
of $B^\trl(G,H)$, for $G,H\in\calD$. Throughout this section,  $R$ denotes a commutative ring.

\begin{nothing}\label{noth grading A}
{\sl A grading on $\Btilde^{\calS}(G,H)$.}\quad
For a finite group $G$ let $l(G)$ denote the composition length of $G$. 
For finite groups $G,H\in\calD$ and $n\in\NN_0$, we define
\begin{equation*}
  E^{\calS}_{G,H,n} :=\{(U,\alpha,V)\in E^{\calS}_{G,H}\mid l(\ker(\alpha))=n\}
\end{equation*}  
and
\begin{equation*}
  A_n^{\calS}(G,H) :=\bigl\langle (U,\alpha,V)\mid (U,\alpha,V)\in E^{\calS}_{G,H,n}\bigr\rangle_{\ZZ} \subseteq A^{\calS}(G,H)\,.
\end{equation*}
Since $E^{\calS}_{G,H}$ is the disjoint union of the subsets $E^{\calS}_{G,H,n}$, 
we obtain a direct-sum decomposition
\begin{equation*}
  A^{\calS}(G,H) =\bigoplus_{n\ge 0} A^{\calS}_n(G,H)\,.
\end{equation*}
Clearly, $A_n^{\calS}(G,H)$ is a $G\times H$-invariant subgroup of $A^{\calS}(G,H)$,
and we obtain direct-sum decompositions\marginpar{eqn An grading}
\begin{equation}\label{eqn An grading}
  \Btilde^{\calS}(G,H) = \bigoplus_{n\ge 0} \Btilde^{\calS}_n(G,H)\quad\text{and}\quad
  R\Btilde^{\calS}(G,H) = \bigoplus_{n\ge 0} R\Btilde^{\calS}_n(G,H)\,,
\end{equation}
where $\Btilde^{\calS}_n(G,H):=A^{\calS}_n(G,H)^{G\times H}=\Btilde^{\calS}(G,H)\cap A^{\calS}_n(G,H)$.
Note that 
\begin{equation*}
  R\Btilde^{\calS}_0(G,H) = R\Btilde^{\Delta(\calS)}(G,H)\,,
\end{equation*}
where $\Delta(\calS)_{G,H}:=\calS_{G,H}\cap\Delta_{G,H}$ for $G,H\in \calD$.
\end{nothing}

Recall from \ref{noth sigmatilde} that we can define a category
$\Delta(\calS)_G$ whose objects are the subgroups of $G$ and
whose morphism sets are determined by the groups in $\Delta(\calS)_{G,G}$.
In the following, the Jacobson radical of any ring $A$ will be denoted by $J(A)$.

\begin{lemma}\mylabel{lem graded algebra}
Let $G,H,K\in\calD$.

{\rm (a)} For any $m,n\in\NN_0$ one has
\begin{equation*}
  R\Btilde^{\calS}_m(G,H)\cdotH R\Btilde^{\calS}_n(H,K)
  \subseteq R\Btilde^{\calS}_{m+n}(G,K)\,.
\end{equation*}

{\rm (b)} The decomposition in (\ref{eqn An grading}) provides the $R$-algebra 
$R\Btilde^{\calS}(G,G)$ with the structure of a graded $R$-algebra such that
$R\Btilde^{\calS}_0(G,G)=R\Btilde^{\Delta(\calS)}(G,G)$.

{\rm (c)} Assume that $R$ is a field such that $|G|$ and $|\Out_{\Delta(\calS)_G}(U)|$ are units 
in $R$, for every subgroup $U\le G$. Then 
\begin{equation*}
  J(R\Btilde^{\calS}(G,G)) = \bigoplus_{n\ge 1} R\Btilde^{\calS}_n(G,G)\,.
\end{equation*}
In particular, one has a decomposition
\begin{equation*}
  R\Btilde^{\calS}(G,G)=R\Btilde^{\Delta(\calS)}(G,G)\oplus J(R\Btilde^{\calS}(G,G))\,.
\end{equation*}
\end{lemma}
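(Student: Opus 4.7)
The proof breaks into three parts matching the three assertions. For Part (a), my plan is to reduce to products of standard basis elements and use the explicit formula in Lemma \ref{lem multiplication formula}(b). That formula shows the product $[U,\alpha,V]_{G\times H}^+\cdotH[V,\beta,W]_{H\times K}^+$ is a sum of elements of the form $\lexp{(g,k)}{(U,\alpha c_h^{-1}\beta,W)}$. The key observation is that $\ker(\alpha c_h^{-1}\beta)=\beta^{-1}(c_h(\ker\alpha))$ fits into a short exact sequence
\begin{equation*}
  1\to\ker\beta\to\beta^{-1}(c_h(\ker\alpha))\to c_h(\ker\alpha)\to 1
\end{equation*}
induced by $\beta$, so by Jordan--H\"older, $l(\ker(\alpha c_h^{-1}\beta))=l(\ker\beta)+l(\ker\alpha)=n+m$. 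Thus every summand on the right of the formula lies in $E^{\calS}_{G,K,m+n}$, and the product indeed belongs to $R\Btilde^{\calS}_{m+n}(G,K)$.

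Part (b) is then immediate upon specialising $G=H=K$ in Part (a): the decomposition $R\Btilde^{\calS}(G,G)=\bigoplus_{n\ge 0}R\Btilde^{\calS}_n(G,G)$ becomes a grading. The identity element produced by Lemma \ref{lem multiplication formula}(d) is a sum of terms $[U,\id_U,U]_{G\times G}^+$ with $\ker(\id_U)=1$, hence sits in degree $0$. An element $(U,\alpha,V)\in E^{\calS}_{G,G}$ satisfies $\ker\alpha=1$ precisely when $\alpha$ is an isomorphism, that is, when $\Delta(U,\alpha,V)\in\Delta(\calS)_{G,G}$, giving $R\Btilde^{\calS}_0(G,G)=R\Btilde^{\Delta(\calS)}(G,G)$.

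For Part (c), set $I:=\bigoplus_{n\ge 1}R\Btilde^{\calS}_n(G,G)$. By Part (a) this is a two-sided ideal with $I^k\subseteq\bigoplus_{n\ge k}R\Btilde^{\calS}_n(G,G)$. Because the kernel of any epimorphism $\alpha\colon V\to U$ with $V\le G$ is a subgroup of $V$, its composition length is at most $l(G)$, so the grading vanishes in degrees greater than $l(G)$ and hence $I^{l(G)+1}=0$. Thus $I$ is a nilpotent ideal, giving $I\subseteq J(R\Btilde^{\calS}(G,G))$. On the other hand, by the grading the projection of $R\Btilde^{\calS}(G,G)$ onto its degree-$0$ component is an $R$-algebra homomorphism with kernel $I$, which by Part (b) identifies $R\Btilde^{\calS}(G,G)/I$ with $R\Btilde^{\Delta(\calS)}(G,G)$.

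Under the hypotheses of (c), Corollary \ref{cor RBS semisimple} applied with $\calS$ replaced by $\Delta(\calS)$, together with the mark isomorphism of Theorem \ref{thm mark 1}(c), shows that $R\Btilde^{\Delta(\calS)}(G,G)$ is semisimple. Hence the quotient $R\Btilde^{\calS}(G,G)/I$ has trivial Jacobson radical, forcing $J(R\Btilde^{\calS}(G,G))\subseteq I$. Combining the two inclusions gives the desired equality, and the direct-sum decomposition is then simply the degree-$0$/positive-degree splitting afforded by the grading. The only step requiring genuine care is the multiplicativity of composition length of kernels under composition of epimorphisms, which is exactly the content of the short exact sequence in Part (a); everything else is bookkeeping with the graded structure and the semisimplicity input.
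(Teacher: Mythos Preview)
Your proof is correct and follows essentially the same path as the paper. The only cosmetic difference is in Part~(a): the paper reduces directly to the level of $\QQ A^{\calS}$, checking $A^{\trl}_m(G,H)\cdotH A^{\trl}_n(H,K)\subseteq A^{\trl}_{m+n}(G,K)$ via the defining formula $(U,\alpha,V)\cdotH(V,\beta,W)=\tfrac{|C_H(V)|}{|H|}(U,\alpha\beta,W)$ and the short exact sequence $1\to\ker\beta\to\ker(\alpha\beta)\to\ker\alpha\to 1$, whereas you go through the orbit-sum formula of Lemma~\ref{lem multiplication formula}(b); the underlying composition-length argument is identical.
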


\begin{proof}
(a) It suffices to show that 
\begin{equation*}
  A^\trl_m(G,H)\cdotH A^\trl_n(H,K)\subseteq A^\trl_{m+n}(G,H)\,.
\end{equation*}
So let $(U,\alpha,V)\in E_{G,H,m}$ and let $(V,\beta,W)\in E_{H,K,n}$. We need to 
show that $l(\ker(\alpha\beta)) = l(\ker(\alpha))+ l(\ker(\beta))$. 
But this follows from the short exact sequence
\begin{equation*}
  1 \ar \ker(\beta) \ar \ker(\alpha\beta) \Ar{\beta} \ker(\alpha) \ar 1
\end{equation*}
where the second arrow is the inclusion map.

(b) This follows immediately from Part~(a).

(c) By Part~(a), the subspace $I:=\bigoplus_{n\ge 1} R\Btilde^{\calS}_n(G,G)$ is 
an ideal of $R\Btilde^{\calS}(G,G)$. Since $R\Btilde^{\calS}_n(G,G)=0$ for 
$n>\max\{l(H)\mid H\le G\}$, Part~(a) also implies that this is a nilpotent ideal, 
therefore contained in $J(R\Btilde^{\calS}(G,G))$. On the other hand, the factor 
algebra modulo the ideal $I$ is isomorphic to 
$R\Btilde^{\calS}_0(G,G)=R\Btilde^{\Delta(\calS)}(G,G)$, which is semisimple by 
Corollary~\ref{cor RBS semisimple}. This implies 
$J(R\Btilde^{\calS}(G,G))\subseteq I$ and we have equality.
\end{proof}

\begin{nothing}\label{not grading B}
{\sl A grading on $RB^{\calS}(G,G)$.}\quad
Let $G,H\in \calD$ and let $n\in\NN_0$. We define the subgroup
\begin{equation*}
  B_n^{\calS}(G,H)\subseteq B^{\calS}(G,H)
\end{equation*}
as the subset of those elements $a$ in $B^{\calS}(G,H)$ satisfying 
$\Phi_{\trl(U,\alpha,V)}(a)=0$ for all $(U,\alpha,V)\in \bigcup_{i\in\NN_0\smallsetminus\{n\}} E_{G,H,i}$.

Now assume that $|G\times H|$ is a unit in $R$. Then the isomorphism in 
Theorem~\ref{thm mark 1}(b) induces an isomorphism
\begin{equation*}
  \rho^{\calS}_{G,H}\colon RB^{\calS}_n(G,H) \liso R\Btilde^{\calS}_n(G,H)
\end{equation*}
and we obtain a direct sum decomposition\marginpar{Bn grading}
\begin{equation}\label{eqn Bn grading}
  RB^{\calS}(G,H) = \bigoplus_{n\ge 0} RB^{\calS}_n(G,H)
\end{equation}
with
\begin{equation*}
  RB^{\calS}_0(G,H)=RB^{\Delta(\calS)}(G,H)\,.
\end{equation*}
It seems worth mentioning that, in general, the sum $\sum_{n\ge 0}B^{\calS}_n(G,H)$ 
is a proper subgroup of $B^{\calS}(G,H)$. In fact, if $G$ is a cyclic group of order $2$ then
$B_0^{\trl}(G,G)+B_1^{\trl}(G,G)$ is equal to the set of elements $a[G\times G/\Delta(G)]+ b[G\times G/1]+ c[G\times G/1\times G]$ 
with $a,b\in\ZZ$ and $c\in 2\ZZ$. This is a subgroup of $B^{\trl}(G,G)$ of index $2$.
\end{nothing}

The following theorem is now immediate from Lemma~\ref{lem graded algebra} 
and the fact that the isomorphism $\rho_{G,H}^{\calS}$ respects the tensor product 
construction of bisets, cf.~Theorem~\ref{thm mark 1}(a).

\begin{theorem}\mylabel{thm graded algebra}
Let $G,H,K\in\calD$.

\smallskip
{\rm (a)} For any $m,n\in\NN_0$ one has 
\begin{equation*}
  RB^{\calS}_m(G,H)\cdotH RB^{\calS}_n(H,K) \subseteq RB^{\calS}_{m+n}(G,K)\,.
\end{equation*}

{\rm (b)} Assume that $|G|$ is invertible in $R$. Then the $R$-algebra 
$RB^{\calS}(G,G)$ is a graded $R$-algebra, with the grading given in (\ref{eqn Bn grading}).

{\rm (c)}  Assume that $R$ is a field and that $|G|$ and $|\Out_{\Delta(\calS)_G}(U)|$ are invertible in $R$, 
for every subgroup $U\le G$. Moreover set $J:=J(RB^{\calS}(G,G))$.
One has $J=\bigoplus_{n\ge 1} RB^{\calS}_n(G,G)$, and $J$ consists of precisely those elements 
$a\in RB^{\calS}(G,G)$ satisfying $\Phi_{\Delta(U,\alpha,V)}(a)=0$ for all $(U,\alpha,V)\in I^{\Delta(\calS)}_{G,H}$. 
In particular, one has
\begin{equation*}
  RB^{\calS}(G,G)= RB^{\Delta(\calS)}(G,G) \oplus J\,.
\end{equation*}
\end{theorem}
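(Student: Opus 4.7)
The plan is to push Lemma~\ref{lem graded algebra} across the mark homomorphism $\rho^{\calS}_{G,H}$. The translation between the two gradings is essentially tautological: unwinding the formula for $\rho^{\calS}_{G,H}$ in Definition~\ref{def ghost and mark}, an element $a\in RB^{\calS}(G,H)$ lies in $RB^{\calS}_n(G,H)$ if and only if $\rho^{\calS}_{G,H}(a)\in R\Btilde^{\calS}_n(G,H)$, since both are characterized by vanishing of $\Phi_{\trl(U,\alpha,V)}(a)$ for all $(U,\alpha,V)\in E^{\calS}_{G,H}$ with $l(\ker(\alpha))\neq n$.

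For Part~(a), given $a\in RB^{\calS}_m(G,H)$ and $b\in RB^{\calS}_n(H,K)$, Theorem~\ref{thm mark 1}(a) yields
\[ \rho^{\calS}_{G,K}(a\cdotH b) = \rho^{\calS}_{G,H}(a)\cdotH \rho^{\calS}_{H,K}(b), \]
and Lemma~\ref{lem graded algebra}(a) places this product in $R\Btilde^{\calS}_{m+n}(G,K)$; the opening translation then gives $a\cdotH b\in RB^{\calS}_{m+n}(G,K)$. Note that this step requires no invertibility hypothesis. Part~(b) is then immediate: under the assumption that $|G|$ is a unit in $R$, the map $\rho^{\calS}_{G,G}$ is an $R$-algebra isomorphism by Theorem~\ref{thm mark 1}(c), so the direct-sum decomposition in (\ref{eqn Bn grading}) is compatible with multiplication by Part~(a), yielding the graded $R$-algebra structure.

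For Part~(c), I would apply Lemma~\ref{lem graded algebra}(c) to $R\Btilde^{\calS}(G,G)$ and transport the conclusion along the $R$-algebra isomorphism $\rho^{\calS}_{G,G}$. Since Jacobson radicals are preserved under algebra isomorphisms, this identifies $J$ with $\bigoplus_{n\ge1}RB^{\calS}_n(G,G)$. The mark-value characterization then follows from the opening translation: $a\in J$ means the degree-zero component of $a$ vanishes, i.e., $\Phi_{\trl(U,\alpha,V)}(a)=0$ for all $(U,\alpha,V)$ with $l(\ker(\alpha))=0$, which are precisely the triples in $I^{\Delta(\calS)}_{G,G}$ (for which $\trl(U,\alpha,V)=\Delta(U,\alpha,V)$). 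The final direct sum $RB^{\calS}(G,G)=RB^{\Delta(\calS)}(G,G)\oplus J$ is then just the splitting of the grading into degree zero and positive degree, combined with the identification $RB^{\calS}_0(G,G)=RB^{\Delta(\calS)}(G,G)$ recorded in \ref{not grading B}.

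The main obstacle is conceptually minor: once Lemma~\ref{lem graded algebra} is in hand, the argument reduces to the bookkeeping that $\rho^{\calS}_{G,H}$ preserves the two gradings degree-by-degree. This hinges on the fact that the classical mark components $\Phi_{\trl(U,\alpha,V)}$ and the coefficients of $\rho^{\calS}_{G,H}$ at $(U,\alpha,V)$ share the same vanishing pattern, which is transparent from the explicit basis-level formula defining $\rho^{\calS}_{G,H}$.
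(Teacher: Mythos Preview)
Your proposal is correct and follows essentially the same approach as the paper: the paper's proof is a single sentence stating that the result is immediate from Lemma~\ref{lem graded algebra} together with the compatibility of $\rho^{\calS}_{G,H}$ with the tensor product from Theorem~\ref{thm mark 1}(a), and you have simply unpacked that sentence in detail.
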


The following corollary is an immediate consequence of Theorem~\ref{thm graded algebra}(c).

\begin{corollary}\mylabel{cor simples of Btrl}
Let $G\in\calD$ and let $R$ be a field such that $|G|$ and $|\Out_{\Delta(\calS)_G}(U)|$ are 
units in $R$, for every subgroup $U\le G$. Then 
the isomorphism classes of simple $RB^\calS(G,G)$-modules and the isomorphism 
classes of simple $RB^{\Delta(\calS)}(G,G)$-modules are in natural bijective 
correspondence. More precisely, the correspondence is given by restriction 
from $RB^{\calS}(G,G)$ to the subalgebra $RB^{\Delta(\calS)}(G,G)$. Its inverse 
is given by inflation from $RB^{\Delta(\calS)}(G,G)$ to $RB^{\calS}(G,G)$ 
with respect to the ideal $\bigoplus_{n\ge1} RB_n^{\calS}(G,G)$.
\end{corollary}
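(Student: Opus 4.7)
The plan is to deduce this directly from Theorem~\ref{thm graded algebra}(c). That theorem already provides the two crucial ingredients: it identifies $J := \bigoplus_{n\ge 1}RB^{\calS}_n(G,G)$ with the Jacobson radical of $RB^{\calS}(G,G)$, and it gives an $R$-module direct sum decomposition $RB^{\calS}(G,G) = RB^{\Delta(\calS)}(G,G) \oplus J$. From this I would first observe that the composition of the subalgebra inclusion $RB^{\Delta(\calS)}(G,G) \hookrightarrow RB^{\calS}(G,G)$ with the canonical projection $RB^{\calS}(G,G) \twoheadrightarrow RB^{\calS}(G,G)/J$ is an isomorphism of $R$-algebras $RB^{\Delta(\calS)}(G,G) \liso RB^{\calS}(G,G)/J$, since the kernel of the projection is complementary to $RB^{\Delta(\calS)}(G,G)$ inside $RB^{\calS}(G,G)$.

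Next, I would invoke the standard fact from ring theory that for any ring $A$ the simple left $A$-modules are precisely those on which $J(A)$ acts as zero; hence restriction and inflation along $A \twoheadrightarrow A/J(A)$ furnish mutually inverse bijections between isomorphism classes of simple $A$-modules and of simple $A/J(A)$-modules. Applied to $A = RB^{\calS}(G,G)$, this gives the bijection of simples between $RB^{\calS}(G,G)$ and the quotient $RB^{\calS}(G,G)/J$, and composing with the algebra isomorphism from the previous paragraph transfers it to a bijection with the simples of $RB^{\Delta(\calS)}(G,G)$.

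Finally, I would verify that under this transfer the bijection is described precisely as restriction and inflation between $RB^{\calS}(G,G)$ and its subalgebra $RB^{\Delta(\calS)}(G,G)$. This is immediate: for a simple $RB^{\calS}(G,G)$-module $M$ the ideal $J$ acts as zero on $M$, so the action of any $a \in RB^{\Delta(\calS)}(G,G)$ on $M$ coincides with the action of its image in $RB^{\calS}(G,G)/J$ under the splitting. Conversely, inflation of a simple $RB^{\Delta(\calS)}(G,G)$-module to $RB^{\calS}(G,G)$ by letting $J$ act trivially corresponds under the splitting to the standard inflation $RB^{\calS}(G,G)/J$-modules $\to$ $RB^{\calS}(G,G)$-modules.

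There is essentially no obstacle, as all the substantive work has been done in Theorem~\ref{thm graded algebra}(c). The only point requiring care is the compatibility between the purely algebraic bijection (via the projection to $RB^{\calS}(G,G)/J$) and the concrete bijection advertised in the statement (via restriction to the subalgebra $RB^{\Delta(\calS)}(G,G)$); this compatibility is forced by the fact that the isomorphism $RB^{\Delta(\calS)}(G,G) \liso RB^{\calS}(G,G)/J$ is literally induced by the inclusion of the subalgebra, so the two $RB^{\Delta(\calS)}(G,G)$-module structures on any simple agree tautologically.
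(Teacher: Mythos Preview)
Your proposal is correct and follows exactly the approach the paper intends: the paper states that the corollary is an immediate consequence of Theorem~\ref{thm graded algebra}(c), and you have correctly spelled out why the decomposition $RB^{\calS}(G,G)=RB^{\Delta(\calS)}(G,G)\oplus J$ with $J=J(RB^{\calS}(G,G))$ yields the claimed bijection via restriction and inflation.
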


For the proof of the next theorem we first need a well-known result about Hecke algebras.

\begin{lemma}\mylabel{lem Hecke}
Let $R$ be a field of positive characteristic $p$, let $G$ be a finite group 
and let $H$ be a subgroup of $G$ such that $|H|$ is not divisible by $p$ but 
$|G|$ is divisible by $p$. Then $J(\End_{RG}(\Ind_H^G(R)))\neq \{0\}$.
\end{lemma}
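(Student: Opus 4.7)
The plan is to exhibit an explicit nonzero nilpotent left ideal in the endomorphism algebra $E := \End_{RG}(\Ind_H^G(R))$, since any such ideal is automatically contained in $J(E)$. Set $M := \Ind_H^G(R)$, which we identify with the permutation module $R[G/H]$, and write $s := \sum_{gH \in G/H} gH$ for the sum over all cosets, a $G$-fixed element of $M$. I will consider the two natural $RG$-linear maps: the injection $\iota\colon R \to M$, $1 \mapsto s$, and the augmentation $\varepsilon\colon M \to R$, $gH \mapsto 1$. The key numerical input is that $\varepsilon \circ \iota\colon R \to R$ equals multiplication by $[G:H]$, and this vanishes in $R$ since the hypothesis $p \mid |G|$ together with $p \nmid |H|$ forces $p \mid [G:H]$.

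Next, I will set $\varphi := \iota \circ \varepsilon \in E$. This endomorphism is nonzero, e.g.~$\varphi(eH) = s \neq 0$. The stronger assertion I want to establish is that $\varphi f \varphi = 0$ for every $f \in E$. The crucial observation is that the permutation module $M = R[G/H]$ has one-dimensional invariants $M^G = Rs$, because $G$ acts transitively on the coset basis. For any $f \in E$, the element $f(s)$ must therefore lie in $M^G = Rs$ and hence equal $\mu s$ for some $\mu \in R$, so $f \circ \iota = \mu \iota$ as maps $R \to M$. Composing on the left with $\varepsilon$ then yields $\varepsilon f \iota = \mu(\varepsilon \iota) = 0$, whence
\begin{equation*}
  \varphi f \varphi = \iota \, (\varepsilon f \iota) \, \varepsilon = 0\,.
\end{equation*}

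From $\varphi E \varphi = 0$ it follows immediately that the nonzero left ideal $E\varphi$ satisfies $(E\varphi)^2 = E \, (\varphi E \varphi) = 0$ and is therefore nilpotent. Since every nilpotent one-sided ideal of a ring is contained in its Jacobson radical, we conclude that $0 \neq \varphi \in E\varphi \subseteq J(E)$, which gives the lemma. There is no serious obstacle; the only conceptual point is the one-dimensionality of $M^G$, which forces every endomorphism of $M$ to act on the invariant line $Rs$ by a scalar, and this is precisely what makes the annihilation $\varepsilon f \iota = 0$ work. The hypothesis $p \nmid |H|$ enters only to guarantee that $p$ actually divides the index $[G:H]$.
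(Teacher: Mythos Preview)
Your proof is correct and follows essentially the same strategy as the paper: both exhibit an explicit nonzero element of the endomorphism algebra that generates a nilpotent ideal. The paper identifies $E$ with $eRGe$ for the idempotent $e=|H|^{-1}\sum_{h\in H}h$ and takes the element $a=\sum_{g\in G}g$, observing $a=eae\neq 0$ and $a^2=|G|a=0$; under the isomorphism $R[G/H]\cong RGe$ your endomorphism $\varphi=\iota\circ\varepsilon$ corresponds precisely to right multiplication by $|H|^{-1}a$, so the two arguments use the same element up to a unit scalar.
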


\begin{proof}
Write $e$ for the idempotent $|H|^{-1}\sum_{h\in H}h\in RG$. Note that 
$\Ind_H^G(R)=R\otimes_{RH}RG$ is isomorphic to $eRG$ as right $RG$-modules 
and that $\End_{RG}(\Ind_H^G(R))$ is isomorphic to $eRGe$ as $R$-algebras. 
Furthermore, set $a:=\sum_{g\in G}g\in RG$. Then $0\neq a = ea = eae \in eRGe$,
and $Ra$ is a non-zero ideal of $eRGe$. Since $a^2=|G|a=0$, we have $Ra\subseteq J(eRGe)$.
\end{proof}

The following theorem gives a criterion for the semisimplicity of 
$RB^{\calS}(G,G)$ for an arbitrary field $R$. It can also be interpreted as a converse 
of the semisimplicity result in Corollary~\ref{cor RBS semisimple}.
For what follows, recall from \ref{noth sigmatilde} the definition of the category
$\calS_G$.

\begin{theorem}\mylabel{thm semisimple criterion}
Let $R$ be a field, let $G\in\calD$ be a non-trivial group and let $\tilde{\calS}_G$ 
be a transversal of the $\calS_G$-isomorphism classes of subgroups of $G$. 
Then the following statements are equivalent:

\smallskip
{\rm (i)} The $R$-algebra $RB^{\calS}(G,G)$ is semisimple.

\smallskip
{\rm (ii)} One has $\calS_{G,G}\subseteq \Delta_{G,G}$ and the numbers $|G|$ 
and $|\Out_{\calS_G}(U)|$, for all $U\in\tilde{\calS}_G$, are invertible in $R$.
\end{theorem}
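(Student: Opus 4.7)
My plan is to obtain (ii)$\Rightarrow$(i) immediately from Corollary~\ref{cor RBS semisimple}: under (ii) the set $\calS_{G,G}$ equals $\Delta(\calS)_{G,G}$, so that corollary applies with $\Delta(\calS)=\calS$. The substance of the theorem is (i)$\Rightarrow$(ii), which I would establish in three consecutive steps.

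First I would show $|G|$ is invertible in $R$. Since $\Delta(G)\in\calS_{G,G}$ and $\calS_{G,G}$ is closed under subgroups, the trivial subgroup of $G\times G$ lies in $\calS_{G,G}$, so $x:=[(G\times G)/1]$ belongs to $B^{\calS}(G,G)$. A short Mackey computation (Proposition~\ref{prop Mackey formula}), exploiting $k_1(L)=1$ for every $L\in\calS_{G,G}\subseteq\trl_{G,G}$, gives $[L]\cdot_G x=(|G|/|p_2(L)|)\,x$ for every standard basis element $[L]$, so $RB^{\calS}(G,G)\cdot x\subseteq Rx$ and in particular $x^2=|G|\,x$. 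If $|G|$ were not invertible, then $x^2=0$ would make $xRB^{\calS}(G,G)$ a nonzero right ideal with zero square, contradicting the semisimplicity of $RB^{\calS}(G,G)$.

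Granted that $|G|$ is invertible, I would next deduce $\calS_{G,G}\subseteq\Delta_{G,G}$ using the grading from Section~\ref{section Btrltilde}. By Theorem~\ref{thm mark 1}, the map $\rho_{G,G}^{\calS}\colon RB^{\calS}(G,G)\liso R\Btilde^{\calS}(G,G)$ is an isomorphism, while by Lemma~\ref{lem graded algebra}(a) the ideal $\bigoplus_{n\ge1}R\Btilde_n^{\calS}(G,G)$ is nilpotent. Pulled back via $\rho^{-1}$, it becomes a nilpotent ideal of $RB^{\calS}(G,G)$, which must vanish by semisimplicity. Since the orbit sums of the elements of $E^{\calS}_{G,G,n}$ form a basis of $R\Btilde_n^{\calS}(G,G)$, this vanishing forces $E^{\calS}_{G,G,n}=\emptyset$ for every $n\ge1$, which is the assertion $\calS_{G,G}\subseteq\Delta_{G,G}$.

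With both of the preceding conditions in hand I enter the framework of Section~\ref{section BDeltatilde}. Theorem~\ref{thm sigmatilde}(b) yields an $R$-algebra isomorphism from $RB^{\calS}(G,G)$ onto $\bigoplus_{U\in\tilde{\calS}_G}\End_{R\Out_{\calS_G}(U)}(R\Hombar_{\calS_G}(U,G))$, so each summand is semisimple. Fix $U\in\tilde{\calS}_G$; since $\lexp{(g,1)}\Delta(U)\in\calS_{G,G}$ for every $g\in N_G(U)$, the automorphism $c_g|_U$ lies in $\Aut_{\calS_G}(U)$, and the stabilizer in $\Out_{\calS_G}(U)$ of the class of the inclusion $U\hookrightarrow G$ in $\Hombar_{\calS_G}(U,G)$ equals $\Out_G(U)=N_G(U)/UC_G(U)$. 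The corresponding orbit provides a direct summand $\Ind_{\Out_G(U)}^{\Out_{\calS_G}(U)}R$ of the permutation module $R\Hombar_{\calS_G}(U,G)$, whose endomorphism ring is a corner of the $U$-th summand, and therefore is itself semisimple. Because $|\Out_G(U)|$ divides $|G|$ and so is invertible in $R$, the contrapositive of Lemma~\ref{lem Hecke} forces $|\Out_{\calS_G}(U)|$ to be invertible as well. The main obstacle I expect is in this last step, specifically in the identification of the stabilizer of the class of the inclusion as $\Out_G(U)$ and the recognition of the induced summand inside the permutation module; once those points are handled, Lemma~\ref{lem Hecke} closes the argument.
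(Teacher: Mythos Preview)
Your proposal is correct and follows essentially the same three-step route as the paper's proof: show $|G|\in R^\times$ using the element $[(G\times G)/1]$, then use the grading on $R\Btilde^{\calS}(G,G)$ to force $\calS_{G,G}\subseteq\Delta_{G,G}$, and finally apply Theorem~\ref{thm sigmatilde}(b) together with Lemma~\ref{lem Hecke} to the orbit of the inclusion $U\hookrightarrow G$. The only cosmetic difference is that in step one you exhibit a nilpotent one-sided ideal, whereas the paper shows directly that $[(G\times G)/1]\in J(RB^{\calS}(G,G))$ via quasi-regularity; and in step two you invoke only Lemma~\ref{lem graded algebra}(a) (the grading) to obtain nilpotency, which is in fact cleaner than the paper's citation of Theorem~\ref{thm graded algebra}(c), whose full hypothesis on $|\Out_{\Delta(\calS)_G}(U)|$ is not yet available at that point.
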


\begin{proof}
The statement in (ii) implies the statement in (i), by Corollary~\ref{cor RBS semisimple}.

Next assume that $RB^{\calS}(G,G)$ is semisimple. We first show that 
$|G|$ is invertible in $R$. Assume that this is not the case and consider 
the element $a:=[G\times G]=[(G\times G)/\{1\}]\in RB^{\calS}(G,G)$, the 
class of the regular $G\times G$-set. Let $b\in RB^{\calS}(G,G)$ be arbitrary. 
Since the trivial subgroup $\{1\}\le G\times G$ satisfies $L*\{1\}=\{1\}$ 
for every $L\in \trl_{G,G}$, the multiplication formula in 
Proposition~\ref{prop Mackey formula} implies that $b\cdotG a= ra$ for 
some $r\in R$. Moreover the multiplication formula implies that 
$a\cdotG a=|G|a = 0$. Thus, in the ring $RB^{\calS}(G,G)$ we have 
$(1+ra)(1-ba)=(1+ra)(1-ra)=1-r^2a^2=1$. This shows that $1-ba$ has a left 
inverse. Since $b$ was arbitrary, this shows that $a\in J(RB^{\calS}(G,G))$. 
This is a contradiction to the semisimplicity of $RB^{\calS}(G,G)$. Therefore, 
we have proved that $|G|$ is invertible in $R$. 

Now Theorem~\ref{thm graded algebra}(c) applies, and we derive that 
$\calS_{G,G}\subseteq \Delta_{G,G}$. 

With this established, Theorem~\ref{thm sigmatilde}(b) applies, and we 
obtain that $RB^{\calS}(G,G)$ is a direct product of the $R$-algebras 
$\End_{R\Out_{\calS_G}(U)}(R\Hombar_{\calS_G}(U,G))$, where $U$ varies over 
$\tilde{\calS}_G$. Therefore, also these endomorphism algebras are semisimple. 
Now fix $U\in\tilde{\calS}_G$ and let $\phi\colon U\to G$ be the inclusion 
map, which is contained in $\Hom_{\calS_G}(U,G)$. Let 
$e\in\End_{R\Out_{\calS_G}(U)}(R\Hombar_{\calS_G}(U,G))=:E$
denote the natural projection map onto the $R$-span of the $\Out_{\calS_G}(U)$-orbit of 
$[\phi]$. Then $e$ is an idempotent and $eEe$ is also
a semisimple $R$-algebra. But the latter $R$-algebra is isomorphic to the endomorphism ring
of the transitive permutation $R\Out_{\calS_G}(U)$-module whose basis is the orbit of $[\phi]$.
Moreover, the stabilizer of $[\phi]$ in $\Out_{\calS_G}(U)$
is isomorphic to $N_G(U)/(UC_G(U))$. So its order is a divisor of $|G|$, and hence also invertible in $R$.
Thus, by Lemma~\ref{lem Hecke}, $|\Out_{\calS_G}(U)|$ 
must be invertible in $R$, and the proof of the theorem is complete.
\end{proof}

The following corollary extends a computational result of Webb (cf.~\cite[Theorem~9.6(2)]{W}) 
for the group of order $2$ and the case $\calS_{G,G}=\trl_{G,G}$.

\begin{corollary}\mylabel{cor Webb}
Let $R$ be a non-zero commutative ring, let $G\in\calD$, and assume that 
$\calS_{G,G}\not\subseteq\Delta_{G,G}$. Then $RB^{\calS}(G,G)$ is not semisimple.
\end{corollary}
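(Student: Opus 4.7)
The plan is to reduce to the case where the coefficient ring is a field and then invoke Theorem~\ref{thm semisimple criterion}. The crucial observation is that the hypothesis $\calS_{G,G} \not\subseteq \Delta_{G,G}$ is a condition on the set $\calS_{G,G}$ alone, independent of $R$, and hence survives any scalar extension.

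First I would choose a maximal ideal $\mathfrak{m}$ of $R$ (which exists since $R \neq 0$) and put $k := R/\mathfrak{m}$. Base change along the surjection $R \twoheadrightarrow k$ applied to $RB^{\calS}(G,G) = R \otimes_{\ZZ} B^{\calS}(G,G)$ yields a surjective ring homomorphism $\pi \colon RB^{\calS}(G,G) \twoheadrightarrow kB^{\calS}(G,G)$ whose kernel is the two-sided ideal $\mathfrak{m} \cdot RB^{\calS}(G,G)$. Before applying Theorem~\ref{thm semisimple criterion} over $k$, I would observe that $G$ is automatically non-trivial: if $G = 1$, then $\trl_{G,G} = \Delta_{G,G} = \{\{1\}\}$, which would force $\calS_{G,G} \subseteq \Delta_{G,G}$, against the hypothesis. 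Condition (ii) of Theorem~\ref{thm semisimple criterion} fails as soon as $\calS_{G,G} \not\subseteq \Delta_{G,G}$ (regardless of the characteristic of $k$ or the values of $|G|$ and the outer automorphism numbers), and therefore the $k$-algebra $kB^{\calS}(G,G)$ is not semisimple.

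To conclude, I would interpret ``semisimple'' in the usual Artin--Wedderburn sense (equivalently, as being semisimple as a left module over itself), under which any quotient of a semisimple ring is again semisimple. Were $RB^{\calS}(G,G)$ semisimple, the quotient $kB^{\calS}(G,G)$ would also be semisimple, contradicting the preceding paragraph. There is no serious obstacle in this strategy; the only minor point of care is fixing the meaning of ``semisimple'' over a general commutative base so that stability under quotients is available, after which the result follows at once from Theorem~\ref{thm semisimple criterion}.
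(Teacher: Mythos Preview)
Your proposal is correct and follows essentially the same approach as the paper: reduce modulo a maximal ideal of $R$ to a field $k$, observe that $kB^{\calS}(G,G)$ is a quotient of $RB^{\calS}(G,G)$, and conclude by Theorem~\ref{thm semisimple criterion} that $kB^{\calS}(G,G)$ fails to be semisimple, contradicting the assumed semisimplicity of $RB^{\calS}(G,G)$. Your explicit check that $G$ must be non-trivial (since $G=1$ would force $\calS_{G,G}\subseteq\Delta_{G,G}$) is a small but genuine improvement, as the paper applies Theorem~\ref{thm semisimple criterion} without verifying this hypothesis.
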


\begin{proof}
Assume, for a contradiction, that $RB^{\calS}(G,G)$ is semisimple. Let $\Rbar$ be the factor ring 
of $R$ modulo some maximal ideal. Then
$\Rbar B^{\calS}(G,G)$ is a factor ring of $RB^{\calS}(G,G)$. Since 
$RB^{\calS}(G,G)$ is assumed to be semisimple, so is $\Rbar B^{\calS}(G,G)$. 
Theorem~\ref{thm semisimple criterion} now implies $\calS_{G,G}\subseteq\Delta_{G,G}$, a contradiction.
\end{proof}

\begin{remark}\mylabel{rem biset functor filtration}
Let again $\calD$ be a set.
Assume for simplicity that $R$ is a field of characteristic $0$, that 
$\calS_{G,H}=\trl_{G,H}$ for all $G,H\in\calD$, and that $\calD$ is closed 
under taking subgroups in the sense that every subgroup of a group $G\in\calD$ 
is isomorphic to a group in $\calD$. Then $\Delta(\calS)_{G,H}=\Delta_{G,H}$ for 
all $G,H\in\calD$. Theorem~\ref{thm graded algebra} implies that the $R$-algebra 
$\Atilde^{\calD,\calS}_R=\Atilde^{\calD,\trl}_R$ is graded by
\begin{equation*}
  \Atilde^{\calD,\trl}_R = \bigoplus_{n\ge0} \Atilde^{\calD,\trl}_{n,R}
\end{equation*}
with 
\begin{equation*}
  \Atilde^{\calD,\trl}_{n,R}:=\bigoplus_{G,H\in\calD} R\Btilde^{\trl}_n(G,H)\,,
\end{equation*}
for $n\ge 0$. From Lemma~\ref{lem graded algebra}(c) it is also straightforward 
to see that every simple object in $\lModstar{\Atilde^{\calD,\trl}_R}$ is annihilated 
by the ideal $\Jtilde^{\calD,\trl}_R:=\bigoplus_{n\ge 1}\Atilde^{\calD,\trl}_{n,R}$. Note 
that the subalgebra $\Atilde^{\calD,\trl}_{0,R}=\Atilde^{\calD,\Delta}_R$ is the algebra 
considered in Remark~\ref{rem Mackey semisimple}. Now the decomposition 
\begin{equation*}
  \Atilde^{\calD,\trl}_R=\Atilde^{\calD,\Delta}_R \oplus \Jtilde^{\calD,\trl}_R,
\end{equation*}
together with the category equivalences in \ref{noth biset functors} and 
Remark~\ref{rem biset algebra Atilde}, implies that the isomorphism classes of simple 
objects in $\Func^{\calD,\trl}_R$ and those in $\Func^{\calD,\Delta}_R$ are in natural 
bijective correspondence via restriction and inflation with respect to the above 
direct sum decomposition.

Moreover, one has a filtration of $\Atilde^{\calS,\trl}_R$ by the ideals 
$\Jtilde^{\calS,\trl}_n:=\bigoplus_{i\ge n} \Atilde^{\calS,\trl}_{i,R}$, for $n\ge 0$. 
This leads to a natural filtration of every functor in $\Func^{\calD,\trl}_R$ 
with successive semisimple factors. 
\end{remark}


\section{Fusion systems}\label{section fusion systems}

Throughout this section, $p$ denotes a prime number, $\ZZ_{(p)}\subset\QQ$ 
denotes the localization of $\ZZ$ with respect to the prime ideal $(p)=p\ZZ$, 
and $S$ a finite $p$-group. As before, $R$ is a commutative ring.

In this section we will show that fusion systems $\calF$ on $S$ are in 
bijective correspondence with subsets $\calS_{S,S}$ of $\Delta_{S,S}$ that
satisfy the Axioms (i)--(v) in Hypothesis~\ref{hyp S1}. Thus, we can 
consider the ring $B^{\calF}(S,S)=B^{\calS}(S,S)$ as an invariant of the 
fusion system $\calF$. We identify the characteristic idempotent 
$\omega_{\calF}$ of a saturated fusion system $\calF$ in the ghost ring 
$\ZZ_{(p)}\Btilde^{\calF}(S,S)$ and are able to compute its marks, i.e., its
numbers of fixed points with respect to the subgroups in $\Delta_{S,S}$.
Moreover, we extend the bijection between the set of saturated fusion systems on $S$ 
and a certain set of idempotents in $\ZZ_{(p)}B^\Delta(S,S)$, which was observed 
by Ragnarsson and Stancu in \cite{RS}, to a bijection between the set of all fusion systems on 
$S$ and a certain set of idempotents in $\QQ B^\Delta(S,S)$.

Recall that whenever $P$ and $Q$ are subgroups of $S$, we denote by $\Hom_S(P,Q)$ the
set of homomorphisms $P\to Q$ that are induced by conjugations
with elements in $S$.
Moreover, we again denote by $\Inj(P,Q)$ the set of all injective
group homomorphisms $P\to Q$.

First we recall the definition of a fusion system and of a saturated fusion system on $S$, cf.~\cite{L} for instance.

\begin{definition}
(a) A {\it fusion system on $S$} is a category $\mathcal{F}$ whose objects are
the subgroups of $S$, and whose morphism sets $\Hom_{\mathcal{F}}(P,Q)$
satisfy the following conditions:

(i) If $P,Q\le S$ then $\Hom_S(P,Q)\subseteq \Hom_{\mathcal{F}}(P,Q)\subseteq \Inj(P,Q)$.

(ii) If $P,Q\le S$ and if $\phi\in\Hom_{\mathcal{F}}(P,Q)$ then the resulting
isomorphism $P\myiso \phi(P)$ as well as its inverse are morphisms in $\mathcal{F}$.

\noindent
The composition of morphisms in $\mathcal{F}$ is the usual composition of maps.
Whenever subgroups $P$ and $Q$ of $S$ are isomorphic in $\mathcal{F}$, we write
$P=_{\mathcal{F}} Q$.

\smallskip
(b) For a fusion system $\calF$ on $S$ one introduces the following three notions:

(i) A subgroup $P$ of $S$ is called {\it fully $\mathcal{F}$-centralized} if,
for every $Q\le S$ with $Q=_{\mathcal{F}}P$, one has $|C_S(Q)|\le |C_S(P)|$.

(ii) A subgroup $P$ of $S$ is called {\it fully $\mathcal{F}$-normalized} if,
for every $Q\le S$ with $Q=_{\mathcal{F}}P$, one has $|N_S(Q)|\le |N_S(P)|$.

(iii) For every $P\le S$ and
every $\phi\in\Hom_{\mathcal{F}}(P,S)$, one sets
$$N_{\phi}\colon=\{y\in N_S(P)\mid \exists z\in N_S(\phi(P)): \phi(^yu)={}^z \phi(u) \; \forall u\in P\}.$$

\smallskip
(c) A fusion system $\mathcal{F}$ on $S$ is called {\it saturated} if the following axioms
are satisfied:

(i) ({\em Sylow Axiom})\, $\Aut_S(S)\in\Syl_p(\Aut_{\mathcal{F}}(S))$.

(ii) ({\em Extension Axiom})\, Every morphism $\phi\in\Hom_{\mathcal{F}}(P,S)$ such
that $\phi(P)$ is fully $\mathcal{F}$-normalized extends to a morphism
$\psi\in\Hom_{\mathcal{F}}(N_\phi,S)$.
\end{definition}

\begin{nothing}\mylabel{noth fs to S}
{\sl Fusion systems on $S$ and subsystems of $\Delta_{S,S}$.}\quad
(a) Suppose that $\mathcal{F}$ is a fusion system on $S$, and consider
\begin{equation*}
  \calS:=\calS(\mathcal{F}):=\{\Delta(\phi(P),\phi,P)\mid P\le S,\, \phi\in\Hom_{\mathcal{F}}(P,S)\}\,.
\end{equation*}
Then the class $\mathcal{D}:=\{S\}$, together with
the set $\mathcal{S}$, satisfies Conditions (I) and (II) in Hypothesis~\ref{hyp S1}.
This follows immediately from the definition of a fusion system. 
Thus, we can introduce the notation $B^{\calF}(S,S)$ for $B^{\calS}(S,S)$.

\smallskip
(b) Suppose, conversely, we are given a set $\mathcal{S}\subseteq\Delta_{S,S}$
such that $\mathcal{D}:=\{S\}$, together with this set $\mathcal{S}$,
satisfies
Conditions (I) and (II) in Hypothesis~\ref{hyp S1}. Then we define
a category $\mathcal{F}=\calF(\calS)$ as follows: the objects in $\mathcal{F}$ are the subgroups
of $S$. For $P,Q\le S$, we define
\begin{equation*}
  \Hom_{\mathcal{F}}(P,Q)
  :=\{\iota\circ\phi\mid \Delta(\phi(P),\phi,P)\in\mathcal{S}\text{ and } \phi(P)\le Q\}\,,
\end{equation*}
where $\iota:\phi(P) \to Q$ denotes the inclusion map. The composition
of morphisms in $\mathcal{F}$ shall be the usual composition of maps. Then
$\mathcal{F}$ is a fusion system on $S$.
\end{nothing}

We denote the set of fusion systems on $S$ by $\Fus(S)$. This is a finite 
poset, with the partial order given by the subcategory relation. The set of 
subsets $\calS$ of $\Delta_{S,S}$ such that the class $\calD=\{S\}$ together 
with $\calS_{S,S}=\calS$ satisfies Conditions (I) and (II) in Hypothesis~\ref{hyp S1} 
will be denoted by $\Sys(S)$. This is a finite poset with respect to inclusion. 
The following theorem is an easy exercise, and is left to the reader.

\begin{theorem}\mylabel{thm Fus Sys bijection}
The constructions in \ref{noth fs to S} are mutually inverse isomorphisms 
between the partially ordered sets $\Fus(S)$  and $\Sys(S)$.
\end{theorem}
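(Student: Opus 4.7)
The approach is to verify four ingredients of a poset isomorphism: both constructions land in their target sets, each of the two composites is the identity, and both maps preserve containment. Nearly all of the work consists of translating between properties of morphism sets $\Hom_\calF(P,Q)$ and properties of the corresponding diagonal subgroups $\Delta(\phi(P),\phi,P)\le S\times S$.

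For the first direction, I would verify that $\calS(\calF)$ satisfies Conditions (I) and (II) in Hypothesis~\ref{hyp S1} applied to $\calD=\{S\}$. Axiom (iv) is immediate from $\id_S\in\Hom_\calF(S,S)$. Axiom~(i) (closure under $S\times S$-conjugation) follows from the identity $\lexp{(s,t)}{\Delta(\phi(P),\phi,P)}=\Delta(\lexp{s}{\phi(P)},c_s\phi c_t^{-1},\lexp{t}{P})$ together with the fact that the inner automorphisms $c_s,c_t$ are morphisms in $\calF$ by fusion-system Axiom~(i). Axiom~(v) holds because opposition sends $\Delta(\phi(P),\phi,P)$ to $\Delta(P,\phi^{-1},\phi(P))$, and fusion-system Axiom~(ii) guarantees $\phi^{-1}$ is a morphism in $\calF$. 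Axiom~(ii) (subgroup-closure) follows because each subgroup of $\Delta(\phi(P),\phi,P)$ has the form $\Delta(\phi(V),\phi|_V,V)$ for some $V\le P$, and $\phi|_V$ is a morphism in $\calF$ by fusion-system Axiom~(ii) composed with inclusion. Axiom~(iii) ($*$-closure) is the most substantive: an explicit computation identifies the product $\Delta(\phi(P),\phi,P)*\Delta(\psi(Q),\psi,Q)$ (with $\phi\in\Hom_\calF(P,S)$, $\psi\in\Hom_\calF(Q,S)$) as a union of diagonal subgroups coming from composites of suitable restrictions of $\phi$ and $\psi$, which again belong to $\calF$ by closure under composition.

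For the second direction, $\calF(\calS)$ being a fusion system breaks into two parts: the inclusion $\Hom_{\calF(\calS)}(P,Q)\subseteq\Inj(P,Q)$ is automatic since $\calS\subseteq\Delta_{S,S}$, while $\Hom_S(P,Q)\subseteq\Hom_{\calF(\calS)}(P,Q)$ follows by producing from $\Delta(S)\in\calS$ each subgroup $\Delta(\lexp{s}{P},c_s,P)$ via Hypothesis~\ref{hyp S1}(i) and (ii); fusion-system Axiom~(ii) then follows from closure of $\calS$ under subgroups (which yields the restriction to the image) and opposition (which yields the inverse). To see the composites are identities, any $\psi\in\Hom_\calF(P,Q)$ can be written as $\iota\circ\phi$ with $\phi:=\iota_{Q,S}\circ\psi\in\Hom_\calF(P,S)$, which realizes $\psi$ inside $\Hom_{\calF(\calS(\calF))}(P,Q)$, and conversely every such composite lies in $\Hom_\calF(P,Q)$; the identity $\calS(\calF(\calS))=\calS$ is then a tautological unwinding of definitions. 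Order preservation in both directions is obvious from the constructions, so the two maps constitute mutually inverse isomorphisms of posets.

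The only step requiring any real calculation is Axiom~(iii) in the first direction, which needs a careful translation of the $*$-product of subgroups of $S\times S$ into composition of partially-defined morphisms in $\calF$ via the parametrization~(\ref{eqn Etrl bijection}). All remaining verifications amount to reading the two definitions side by side.
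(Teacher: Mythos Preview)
The paper does not actually give a proof of this theorem; it declares the result an easy exercise and leaves it to the reader. Your outline is exactly the natural approach and is essentially complete.

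Two minor points are worth tightening. First, in verifying Axiom~(iii) for $\calS(\calF)$, the $*$-product $\Delta(\phi(P),\phi,P)*\Delta(\psi(Q),\psi,Q)$ is a single subgroup, namely $\Delta(\phi\psi(R),\phi\psi|_R,R)$ with $R=\psi^{-1}(P\cap\psi(Q))$, not a union of several; the argument then reduces to observing that $\phi\psi|_R$ is a morphism in $\calF$ by restriction and composition. Second, in the reverse direction you should also check that $\calF(\calS)$ is closed under composition of morphisms (so that it is genuinely a category), which follows from Hypothesis~\ref{hyp S1}(iii) by the same computation read in reverse.
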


\begin{examples}\mylabel{expl fusion systems}
(a) The following is a standard example of a saturated fusion system. 
Let $G$ be a group such that $S\in\Syl_p(G)$. Then the category $\mathcal{F}_S(G)$ 
with objects given by the set of subgroups of $S$ and with morphisms
$$\Hom_{\mathcal{F}_S(G)}(P,Q):=\Hom_G(P,Q),\quad (P,Q\le S)$$
is a saturated fusion system on $S$, called the {\it fusion system
of $G$ on $S$}. For a proof, see \cite{L}. The corresponding set $\mathcal{S}$
is thus given as
$$\mathcal{S}=\{\Delta(\phi(P),\phi,P)\mid P\le S,\, \phi\in\Hom_G(P,S)\},$$
or as the set of subgroups $L\in\Delta_{S,S}$ such that $L\le_{G\times G}\Delta(S)$.

(b) Consider the alternating group $\mathfrak{A}_4$ of degree 4, and let $S$ be the
unique Sylow 2-subgroup of $\mathfrak{A}_4$. That is,
$S=\langle (1,2)(3,4),(1,4)(2,3)\rangle$ is a Klein four-group.
The automorphism group $\Aut(S)$ of $S$ is
isomorphic to the symmetric group
$\mathfrak{S}_3$ of degree 3.
Let $A\unlhd\Aut(S)$ with $A\cong\mathfrak{A}_3$.
Then the saturated fusion system $\calF:=\mathcal{F}_S(\mathfrak{A}_4)$
corresponds to the set
$$\{\Delta(\phi(P),\phi|_{P},P)\mid P\le S,\; \phi\in A\}\subseteq\Delta_{S,S}.$$
Namely,
$$\Aut_{\calF}(S)=\Aut_{\mathfrak{A}_4}(S)\cong N_{\mathfrak{A}_4}(S)/C_{\mathfrak{A}_4}(S)
=\mathfrak{A}_4/S\cong \mathfrak{A}_3,$$
so that $\Aut_{\calF}(S)=A$.
Now suppose that $P\le S$ and $\psi\in \Hom_{\mathcal{F}}(P,S)$.
Then, since $S$ is abelian, we have $N_{\psi}=S$. Hence the Extension Axiom
forces $\Hom_{\mathcal{F}}(P,S)=\{\psi|_{P}\mid \psi\in A\}$ so that
$\{\Delta(\psi(P),\psi|_{P},P)\mid P\le S,\; \psi\in A\}= \mathcal{S}(\calF)$.

(c) Let $S$ and $A$ be as in Part (b) above, and this time consider
the set
$$\mathcal{S}:=\{\Delta(S)\}\cup\{\Delta(\phi(P),\phi|_{P},P)\mid P<S,\; \phi\in A\}\subseteq\Delta_{S,S}.$$
The class $\mathcal{D}:=\{S\}$, together with the
set $\mathcal{S}$, obviously satisfies Conditions (I) and (II) in Hypothesis~\ref{hyp S1}.
Thus $\mathcal{S}$ gives rise to a fusion system $\mathcal{F}:=\mathcal{F}(\mathcal{S})$
on $S$. However, $\mathcal{F}$ is not saturated. To see this,
let $Q:=\langle (1,2)(3,4)\rangle$ and $P:=\langle (1,3)(2,4)\rangle$.
Let further $\phi\in A$ be the automorphism of $S$ induced by
conjugation with $(1,2,3)\in\mathfrak{A}_3$.
By definition, the restriction $\psi:=\phi|_{P}$
belongs to $\Hom_{\mathcal{F}}(P,Q)$. Since $S$ is abelian, we have $N_{\psi}=S$.
Since $\Aut_{\mathcal{F}}(S)=\{\mathrm{id}_S\}$, the morphism $\psi$ does not
extend to $N_{\psi}$, and $\mathcal{F}$ is, therefore, not saturated.
\end{examples}

In \cite{RS}, K. Ragnarsson and R. Stancu established
a bijective correspondence between saturated fusion systems on $S$ and
certain idempotents in the $\ZZ_{(p)}$-algebra
$\ZZ_{(p)}B^\Delta(S,S)$. We next aim to use our results
from Section~\ref{section BDeltatilde} to extend the bijection of Ragnarsson--Stancu
to a bijection between the set of all fusion systems on $S$ and
certain idempotents in the $\QQ$-algebra
$\QQ B^\Delta(S,S)$. In order to do so, we recall the basic notions from
\cite{RS} needed in subsequent statements.

\begin{nothing}\mylabel{noth S to SxS}
Consider the natural $\ZZ$-bilinear map $-\times-$, given by
$$-\times-\colon 
  B^\Delta(S,S)\times B^\Delta(S,S)\to B^\Delta(S\times S,S\times S),\, ([X],[Y])\mapsto [X\times Y],$$
where $X$ and $Y$ are bifree $(S,S)$-bisets. We view $X\times Y$ as 
$(S\times S, S\times S)$-biset by $(s_1,s_2)(x,y)(s'_1,s'_2)=(s_1xs'_1,s_2ys'_2)$,
for $s_1,s_2,s'_1,s'_2\in S$ and $(x,y)\in X\times Y$. This map induces, for every
commutative ring $R$, an $R$-bilinear map
$$RB^\Delta(S,S)\times RB^\Delta(S,S)\to RB^\Delta(S\times S,S\times S),\, ([X],[Y])\mapsto [X\times Y].$$
Moreover, if $X$ and $Y$ are bifree $(S,S)$-bisets then
$X\times Y$ carries, via restriction along the diagonal map 
$s\mapsto (s,s)$, a bifree $(S\times S,S)$-biset structure
(respectively, a bifree $(S,S\times S)$-biset structure) with
$$(g,h)(x,y)k:=(gxk,hyk)\quad (\text{respectively, }g(x,y)(h,k):=(gxh,gyk)),$$
for all $x\in X$, $y\in Y$, $g,h,k\in S$.
\end{nothing}

\begin{definition}\mylabel{def Frobenius}
Let $a\in RB^\Delta(S,S)$. We say that

(a) $a$ is a {\it right Frobenius element} if the following
equality holds in $RB^\Delta(S\times S,S)$:
\begin{equation}\label{eqn right Frobenius}
  a\times a=(a\times [S])\cdotS a.
\end{equation}

(b) $a$ is a {\it left Frobenius element} if the following equality
holds in $RB^\Delta(S,S\times S)$:
\begin{equation}\label{eqn left Frobenius}
   a\times a=a\cdotS  (a\times [S]).
\end{equation}

(c) $a$ is a {\it Frobenius element} if it is a left Frobenius element
and a right Frobenius element.
\end{definition}

\begin{remark}
Let $a\in RB^\Delta(S,S)$.
Then the equality $a\times a=(a\times [S])\cdotS a$  is equivalent to the equality 
$a\times a = ([S]\times a)\cdotS a$ in $RB^\Delta(S\times S,S)$. Similarly, 
the equality $a\times a=a\cdotS  (a\times [S])$ is equivalent to the equality 
$a\times a=a\cdotS([S]\times a)$ in $RB^\Delta(S,S\times S)$. This follows quickly 
by applying the natural function $X\times Y\liso Y\times X$, $(x,y)\mapsto(y,x)$ 
for $(S, S)$-bisets $X$ and $Y$, and the group isomorphism 
$S\times S\liso S\times S$, $(s,t)\mapsto (t,s)$.
\end{remark}

As an immediate consequence of Definition~\ref{def Frobenius}, we obtain

\begin{proposition}\mylabel{prop Frobenius}
Let $R$ be a commutative ring, and let $a\in RB^\Delta(S,S)$.
Then $a$ is a right Frobenius element if and only if $a^\circ$ is
a left Frobenius element.
\end{proposition}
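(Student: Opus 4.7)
The strategy is to apply the anti-involution $-^\circ$ to both sides of the right Frobenius equation and use its basic properties together with a compatibility between $-\circ$ and the cartesian product $-\times-$.

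First I would establish the key compatibility: for bifree $(S,S)$-bisets $X$ and $Y$, the assignment $(x,y)^\circ \mapsto (x^\circ, y^\circ)$ defines an isomorphism of bisets
\begin{equation*}
  (X\times Y)^\circ \myiso X^\circ \times Y^\circ\,.
\end{equation*}
This has to be read in the appropriate structures: if $X\times Y$ is viewed as an $(S\times S, S)$-biset via the diagonal embedding $s\mapsto(s,s)$ on the right, then $(X\times Y)^\circ$ is naturally an $(S, S\times S)$-biset, whereas $X^\circ \times Y^\circ$ is an $(S, S\times S)$-biset via the diagonal on the left. A direct verification using the definition of the opposite biset (together with the identification $S^\circ \myiso S$ via inversion, which gives $[S]^\circ = [S]$ in $RB^\Delta(S,S)$) shows that the two structures match. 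In particular, this yields
\begin{equation*}
  (a\times a)^\circ = a^\circ\times a^\circ \quad\text{and}\quad (a\times [S])^\circ = a^\circ\times [S]\,,
\end{equation*}
where the right-hand sides are interpreted as elements of $RB^\Delta(S, S\times S)$.

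With these identifications in place, I would apply $-^\circ$ to the right Frobenius equation (\ref{eqn right Frobenius}). Using the anti-multiplicativity $(x\cdot_S y)^\circ = y^\circ \cdot_S x^\circ$ together with the compatibilities above, the equation
\begin{equation*}
  a\times a = (a\times [S])\cdot_S a\quad\text{in}\ RB^\Delta(S\times S, S)
\end{equation*}
becomes
\begin{equation*}
  a^\circ \times a^\circ = a^\circ \cdot_S (a^\circ \times [S])\quad\text{in}\ RB^\Delta(S, S\times S)\,,
\end{equation*}
which is precisely the left Frobenius equation (\ref{eqn left Frobenius}) for $a^\circ$. The converse direction is immediate: applying $-^\circ$ to the left Frobenius equation for $a^\circ$ and using $(a^\circ)^\circ=a$ recovers the right Frobenius equation for $a$.

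The only non-routine step is the first one, namely the book-keeping needed to identify $(X\times Y)^\circ$ with $X^\circ\times Y^\circ$; the subtlety is that the opposite operation swaps the roles of ``diagonal on the right'' and ``diagonal on the left,'' and one has to match the two $(S, S\times S)$-biset structures carefully. Once this identification is verified, the proof reduces to one line of algebraic manipulation using the properties of $-^\circ$ already recorded in the paper.
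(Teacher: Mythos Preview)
Your proposal is correct and is exactly the argument the paper has in mind: the paper offers no proof at all, merely stating that the proposition is ``an immediate consequence of Definition~\ref{def Frobenius},'' and your write-up simply makes explicit the compatibility $(X\times Y)^\circ\cong X^\circ\times Y^\circ$ (swapping diagonal-on-the-right for diagonal-on-the-left), together with $[S]^\circ=[S]$ and the anti-multiplicativity of $-^\circ$, that underlies this one-line claim.
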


Our next aim is to establish a bijection
between the set $\Fus(S)$ of fusion systems on $S$ and a set $\Idem(S)$ of certain
idempotents $\omega\in \QQ B^\Delta(S,S)$.
Recall from Subsection~\ref{markhom} the map
$\Phi_L\colon B(S,S)\to \ZZ$, $[X]\mapsto |X^L|$, where $L\le S\times S$ can be any subgroup
and $X$ can be any $(S,S)$-biset.

\begin{definition}\mylabel{def idem}
We denote by $\Idem(S)$ the set of idempotents $\omega\in\QQ B^\Delta(S,S)$ satisfying the following properties:

(i) $\omega$ is a Frobenius element,

(ii) $\mathrm{Fix}(\omega):=\{L\in \Delta_{S,S}\mid \Phi_L(\omega)\neq 0\}$ is closed under
taking subgroups,

(iii) $\Delta(S)\in\mathrm{Fix}(\omega)$.
\end{definition}

We will construct the bijection between $\Idem(S)$ and $\Fus(S)$ in several 
steps, and we begin by recalling from \cite{RS}
a criterion for $a\in \QQ B^\Delta(S,S)$ being a Frobenius element.
The result \cite[Lemma~7.4]{RS} is formulated for
$\ZZ_{(p)}$, and one cannot directly lift the statement to $\QQ$, since the two sides 
of the equation defining the Frobenius property are not linear in the element 
$a$. However, the proof of Lemma~7.4 in \cite{RS} implies that the
result remains true when replacing $\ZZ_{(p)}$ by $\QQ$, or any commutative ring $R$.
We provide these arguments in the proof of the following proposition for the reader's convenience.

\begin{proposition}[cf.~\cite{RS}, \S 7]\mylabel{prop Frobenius via fixed points}
Let $a\in\QQ B^\Delta(S,S)$. Then the following hold:

{\rm (a)} The element $a$ is a right Frobenius element if and only if, for every $P\le S$
and all $\phi,\psi\in\Inj(P,S)$, one has
$$\Phi_{\Delta(\phi(P),\phi,P)}(a)\Phi_{\Delta(\psi(P),\psi,P)}(a)=
\Phi_{\Delta(\phi(P),\phi\psi^{-1},\psi(P))}(a)\Phi_{\Delta(\psi(P),\psi,P)}(a)\,.$$

{\rm (b)} The element $a$ is a left Frobenius element if and only if, for every $P\le S$
and all $\phi,\psi\in\Inj(P,S)$, one has
$$\Phi_{\Delta(P,\phi^{-1},\phi(P))}(a)\Phi_{\Delta(P,\psi^{-1},\psi(P))}(a)=
\Phi_{\Delta(\psi(P),\psi\phi^{-1},\phi(P))}(a)\Phi_{\Delta(P,\psi^{-1},\psi(P))}(a)\,.$$
\end{proposition}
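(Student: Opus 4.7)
My plan is to test the Frobenius equation (\ref{eqn right Frobenius}) via the classical mark homomorphism. By Proposition~\ref{prop bifree constructions}(c), two elements of $\QQ B^\Delta(S\times S,S)$ agree if and only if their marks $\Phi_L$ agree at every twisted diagonal subgroup $L$ of $(S\times S)\times S$; each such $L$ has the form $L = \Delta(U,(\alpha_1,\alpha_2),V)$ with $V \le S$ and group homomorphisms $\alpha_1,\alpha_2 : V \to S$ satisfying $\ker\alpha_1 \cap \ker\alpha_2 = 1$. A straightforward inspection of the fixed-point equations for both $X\times Y$ and $(X\times S)\times_S Y$, where $X, Y$ are bifree $(S,S)$-bisets, will show that both sides of (\ref{eqn right Frobenius}) vanish at $L$ unless $\alpha_1,\alpha_2 \in \Inj(V,S)$, so it suffices to restrict to that case.

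For the left-hand side, a direct computation from the definition of the $(S\times S,S)$-biset structure on $X\times Y$ gives
\[
  (X\times Y)^{\Delta(U,(\alpha_1,\alpha_2),V)} = X^{\Delta(\alpha_1(V),\alpha_1,V)} \times Y^{\Delta(\alpha_2(V),\alpha_2,V)}\,,
\]
so $\Phi_L(a\times a) = \Phi_{\Delta(\alpha_1(V),\alpha_1,V)}(a)\,\Phi_{\Delta(\alpha_2(V),\alpha_2,V)}(a)$. The crucial step for the right-hand side is the identification of $(X\times S)\times_S Y$ with $X\times Y$ as $(S\times S,S)$-bisets: using that the right diagonal $S$-action on $X\times S$ is free in the second factor, every element has a unique representative $(x,1)\times_S y$, yielding a bijection $(x,y)\mapsto (x,1)\times_S y$ under which the $(S\times S,S)$-biset structure transports to
\[
  (s_1,s_2)(x,y) = (s_1 x s_2^{-1},\, s_2 y)\,,\qquad (x,y)\,t = (x,\, yt)\,.
\]
Reading off the fixed points with respect to $L$ in this twisted action will then give $((X\times S)\times_S Y)^L = X^{\Delta(\alpha_1(V),\alpha_1\alpha_2^{-1},\alpha_2(V))} \times Y^{\Delta(\alpha_2(V),\alpha_2,V)}$, so $\Phi_L((a\times [S])\cdotS a) = \Phi_{\Delta(\alpha_1(V),\alpha_1\alpha_2^{-1},\alpha_2(V))}(a)\,\Phi_{\Delta(\alpha_2(V),\alpha_2,V)}(a)$. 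Setting $P = V$, $\phi = \alpha_1$, $\psi = \alpha_2$ identifies (\ref{eqn right Frobenius}) with the family of mark identities in (a).

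For (b), the cleanest route is via Proposition~\ref{prop Frobenius}: $a$ is a left Frobenius element if and only if $a^\circ$ is a right Frobenius element. Applying (a) to $a^\circ$, transferring each mark through $\Phi_L(a^\circ) = \Phi_{L^\circ}(a)$, and using the opposite-subgroup identities $\Delta(\phi(P),\phi,P)^\circ = \Delta(P,\phi^{-1},\phi(P))$ and $\Delta(\phi(P),\phi\psi^{-1},\psi(P))^\circ = \Delta(\psi(P),\psi\phi^{-1},\phi(P))$, the right-Frobenius mark criterion for $a^\circ$ translates directly into the asserted criterion for $a$. I expect the main obstacle to be the fixed-point calculation for the right-hand side of (a): correctly normalizing $(X\times S)\times_S Y$, reading off the twisted $(S\times S,S)$-action, and identifying the resulting stabilizer as the twisted diagonal $\Delta(\phi(P),\phi\psi^{-1},\psi(P))$.
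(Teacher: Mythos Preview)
Your proposal is correct and follows essentially the same approach as the paper: compute the marks $\Phi_L$ of both sides of the Frobenius equation at twisted diagonal subgroups $L\le (S\times S)\times S$, identify them with the products displayed in the statement, and invoke Proposition~\ref{prop bifree constructions}(c); for (b), pass to $a^\circ$ via Proposition~\ref{prop Frobenius} exactly as the paper does. Your treatment is in fact slightly more careful than the paper's: you correctly parametrize all $L\in\Delta_{S\times S,S}$ by pairs $(\alpha_1,\alpha_2)$ with $\ker\alpha_1\cap\ker\alpha_2=1$ and explicitly argue that both marks vanish unless $\alpha_1,\alpha_2$ are individually injective, whereas the paper tacitly restricts to that case and otherwise defers the two key mark identities to \cite[Lemma~7.4]{RS}.
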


\begin{proof}
Suppose that $L\in\Delta_{S\times S,S}$. Then there exist $P\le S$
and $\phi,\psi\in\Inj(P,S)$ such that $L=\Delta((\phi\times\psi)(P),\phi\times\psi,P)$,
where $(\phi\times\psi)(P):=\{(\phi(g),\psi(g))\mid g\in P\}\le S\times S$.

The proof of \cite[Lemma 7.4]{RS} shows that, for all $a,b\in \QQ B^\Delta(S,S)$ and
all $L=\Delta((\phi\times\psi)(P),\phi\times\psi,P)\in\Delta_{S\times S,S}$, we have
\begin{equation}\label{eqn FR1}
\Phi_L(a\times b)=\Phi_{\Delta(\phi(P),\phi,P)}(a)\Phi_{\Delta(\psi(P),\psi,P)}(b),
\end{equation}
and
\begin{equation}\label{eqn FR2}
\Phi_L((a\times [S])\cdotS b)=
\Phi_{\Delta(\phi(P),\phi \psi^{-1},\psi(P))}(a)\Phi_{\Delta(\psi(P),\psi,P)}(b).
\end{equation}
Specializing $a=b$ and using Proposition~\ref{prop bifree constructions}, Assertion (a) follows.

As for (b), let again $a\in \QQ B^\Delta(S,S)$. By Proposition~\ref{prop Frobenius},
$a$ is a left Frobenius element if and only if $a^\circ$ is a right Frobenius element. By Part (a),
this in turn is equivalent to requiring
\begin{align*}
\Phi_{\Delta(P,\phi^{-1},\phi(P))}(a)\Phi_{\Delta(P,\psi^{-1},\psi(P))}(a)
&=\Phi_{\Delta(\phi(P),\phi,P)}(a^\circ)\Phi_{\Delta(\psi(P),\psi,P)}(a^\circ)\\
&=\Phi_{\Delta(\phi(P),\phi\psi^{-1},\psi(P))}(a^\circ)\Phi_{\Delta(\psi(P),\psi,P)}(a^\circ)\\
&=\Phi_{\Delta(\psi(P),\psi\phi^{-1},\phi(P))}(a)\Phi_{\Delta(P,\psi^{-1},\psi(P))}(a),
\end{align*}
for all $P\le S$ and all $\phi,\psi\in\Inj(P,S)$.
This settles (b).
\end{proof}

\begin{nothing}\mylabel{noth bijection fs idemp1}
{\sl The idempotent $\omega_{\calF}$.}\quad
Let $\mathcal{F}$ be a fusion system on $S$, and let $\Ftilde$
be a transversal for the $\mathcal{F}$-isomorphism classes of subgroups of $S$.

(a) Recall from Theorem~\ref{thm sigmatilde} that we have a
$\QQ$-algebra isomorphism
\begin{align}
\notag
\sigmatilde^{\mathcal{F}}\colon \QQ B^{\calF}(S,S)&\to \prod_{P\in\Ftilde}
\End_{\QQ\Out_{\mathcal{F}}(P)}(\QQ\Hombar_{\mathcal{F}}(P,S)),\\
\label{eqn sigmatildeF}
a&\mapsto \left([\psi]\mapsto \sum_{[\phi]\in\Hombar_{\mathcal{F}}(P,S)}
\frac{\Phi_{\Delta(\phi(P),\phi\psi^{-1},\psi(P))}(a)}{|C_S(\phi(P))|} [\phi]\right)_P.
\end{align}
For every $P\in\Ftilde$ we define
$\varepsilon_P\in\End_{\QQ\Out_{\mathcal{F}}(P)}(\QQ\Hombar_{\mathcal{F}}(P,S))$ such that\marginpar{eqn epsilon definition}
\begin{equation}\label{eqn epsilon definition}
  \varepsilon_P([\psi]):=
  \sum_{[\phi]\in\Hombar_{\mathcal{F}}(P,S)}\frac{|S|}{|C_S(\phi(P))||\Hom_{\mathcal{F}}(P,S)|}[\phi]\,,
  \end{equation}
for every $[\psi]\in\Hombar_{\mathcal{F}}(P,S)$. We then define $\omega_{\mathcal{F}}\in\QQ B^{\mathcal{F}}(S,S)$ via
\begin{equation}\label{eqn omegaF}
\sigmatilde^{\mathcal{F}}(\omega_{\mathcal{F}}):=(\varepsilon_P)_{P\in\Ftilde}.
\end{equation}

(b) Let $\calS:=\calS(\calF)$ and note (cf.~(\ref{eqn Hombar bijection})) that we 
have a bijection\marginpar{eqn DeltaF bijection}
\begin{align}
  \notag
  \coprod_{P\in\Ftilde}\Hom_{\calF}(P,S) \times_{\Aut_\calF(P)}
  \Hom_{\calF}(P,S) & \to \calS\,,\\
  \label{eqn DeltaF bijection}
  \phi \times_{\Aut_\calF(P)} \psi & \mapsto \Delta(\phi(P),\phi\psi^{-1},\psi(P))\,,
\end{align}
of $S\times S$-sets. Thus, by (\ref{eqn sigmatildeF}) and (\ref{eqn epsilon definition}),  
the element $\omega_{\calF}\in \QQ B^\Delta(S,S)$ is characterized by\marginpar{eqn omega characterization}
\begin{equation}\label{eqn omega characterization}
  \Phi_L(\omega_{\calF})=\begin{cases} \frac{|S|}{|\Hom_{\calF}(p_1(L),S)|} & \text{if $L\in\calS$,}\\
                                     0 & \text{if $L\notin\calS$.}
               \end{cases}
\end{equation}
In particular, $\omega_{\calF}^\circ =\omega_{\calF}$ is symmetric, since 
$|\Hom_{\calF}(P,S)|=|\Hom_{\calF}(Q,S)|$ if $P=_{\calF} Q$.
\end{nothing}

\smallskip
The following lemma gives a list of properties of the element $\omega_{\calF}$, including a uniqueness statement.

\begin{lemma}\mylabel{lemma omegaF}
Let $\mathcal{F}$ be a fusion system on $S$, let
$\mathcal{S}:=\mathcal{S}(\mathcal{F})$,
and let $\omega_{\mathcal{F}}\in\QQ B^{\mathcal{F}}(S,S)$
be the element defined in (\ref{eqn omegaF}). Then

{\rm (a)} $\omega_{\mathcal{F}}$ is an idempotent in $\QQ B^{\Delta}(S,S)$;

{\rm (b)} $\omega_{\mathcal{F}}$ is a Frobenius element;

{\rm (c)} $\mathrm{Fix}(\omega_{\mathcal{F}})=\mathcal{S}$. 

\noindent
In particular, $\omega_{\calF}\in\Idem(S)$.

Moreover, $\omega_{\mathcal{F}}$ is the unique
element in $\QQ B^\Delta(S,S)$ satisfying Properties (a)--(c).
\end{lemma}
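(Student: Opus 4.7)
The plan is to verify conditions (a), (b), (c) for $\omega_{\calF}$ directly from (\ref{eqn omega characterization}), and then to establish uniqueness by forcing the marks of any $\omega$ satisfying (a)--(c) to agree with those of $\omega_{\calF}$.

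For existence, (c) is immediate from (\ref{eqn omega characterization}). For (a), since $\sigmatilde^{\calF}$ is a $\QQ$-algebra isomorphism by Theorem~\ref{thm sigmatilde}, it suffices to check that each $\varepsilon_P$ is idempotent; as $\varepsilon_P$ sends every basis element $[\psi]$ to the same vector $v_P$, idempotency reduces to the scalar identity $\sum_{[\phi]\in\Hombar_{\calF}(P,S)}|S|/(|C_S(\phi(P))||\Hom_{\calF}(P,S)|)=1$, which follows from the orbit-stabilizer formula $\sum_{[\phi]}|S|/|C_S(\phi(P))|=|\Hom_{\calF}(P,S)|$. For (b), I apply Proposition~\ref{prop Frobenius via fixed points}(a) and verify the right-Frobenius identity case-by-case: both sides vanish if $\psi\notin\Hom_{\calF}(P,S)$, and otherwise both equal $|S|^2/|\Hom_{\calF}(P,S)|^2$, using that post-composition with $\psi^{-1}$ induces a bijection giving $|\Hom_{\calF}(P,S)|=|\Hom_{\calF}(\psi(P),S)|$. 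Left-Frobenius follows for free via Proposition~\ref{prop Frobenius}, since $\calS^\circ=\calS$ together with (\ref{eqn omega characterization}) yield $\omega_{\calF}^\circ=\omega_{\calF}$.

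For uniqueness, take $\omega\in\Idem(S)$ with $\mathrm{Fix}(\omega)=\calS$. A dimension count using closure of $\calS$ under subgroups (together with upper-triangularity of the table of marks) shows that $\omega\in\QQ B^{\calF}(S,S)$, so I may apply $\sigmatilde^{\calF}$ and decompose $\omega$ into components $f_P$. Applying right-Frobenius with $\psi=\phi\in\Hom_{\calF}(P,S)$, and using that $\Phi_{\Delta(\phi(P),\phi,P)}(\omega)\ne 0$ by conditions (ii) and (iii) of Definition~\ref{def idem}, I get $\Phi_{\Delta(\phi(P),\phi,P)}(\omega)=\Phi_{\Delta(\phi(P))}(\omega)$; the same identity applied at source $\phi(P)$ with morphism $\phi^{-1}$ composed with inclusion yields $\Phi_{\Delta(P,\phi^{-1},\phi(P))}(\omega)=\Phi_{\Delta(P)}(\omega)$. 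Combining these with left-Frobenius forces $\xi_P:=\Phi_{\Delta(P)}(\omega)$ to depend only on the $\calF$-isomorphism class of $P$. Consequently $f_P$ sends every basis element to the same vector $v_P=\xi_P\sum_{[\phi]}|C_S(\phi(P))|^{-1}[\phi]$; idempotency $f_P^2=f_P$ combined with the orbit-stabilizer computation then pins down $\xi_P=|S|/|\Hom_{\calF}(P,S)|$, matching the values of $\omega_{\calF}$ via (\ref{eqn omega characterization}). Since all marks of $\omega$ on $\Delta_{S,S}$ are now determined, injectivity of the mark homomorphism on $\QQ B^\Delta(S,S)$ (Proposition~\ref{prop bifree constructions}(c)) forces $\omega=\omega_{\calF}$.

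The main technical obstacle is showing that $\xi_P$ is invariant under $\calF$-isomorphism: this genuinely requires combining both halves of the Frobenius condition together with the two transport identities derived above, whereas all remaining steps are routine bookkeeping with orbit-stabilizer.
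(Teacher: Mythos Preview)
Your argument is correct and follows essentially the same route as the paper: existence of (a)--(c) is read off from (\ref{eqn omega characterization}) and the orbit--stabilizer identity, and uniqueness is obtained by first using right-Frobenius to make the marks $\Phi_{\Delta(\phi(P),\phi\psi^{-1},\psi(P))}(\omega)$ independent of $\psi$, then left-Frobenius to make them independent of $\phi$, and finally idempotency of the $P$-component to pin down the common value. One small correction: the nonvanishing $\Phi_{\Delta(\phi(P),\phi,P)}(\omega)\neq 0$ for $\phi\in\Hom_{\calF}(P,S)$ is justified by your hypothesis $\Fix(\omega)=\calS$ (property~(c)), not by conditions~(ii) and~(iii) of Definition~\ref{def idem} alone, which only give $\Phi_{\Delta(P)}(\omega)\neq 0$.
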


\begin{proof}
(a) A quick computation shows that, for every $P\in\Ftilde$, the endomorphism 
$\varepsilon_P$ is an idempotent. In fact this comes down to the equation
\begin{equation*}
  \sum_{[\phi]\in\Hombar_{\mathcal{F}}(P,S)}\frac{|S|}{|C_S(\phi(P)||\Hom_{\mathcal{F}}(P,S)|}=1\,,
\end{equation*}
which holds, since $C_S(\phi(P))$ is the stabilizer of $\phi$ in the $S$-set $\Hom_{\calF}(P,S)$.

(b) This follows immediately from Equation~(\ref{eqn omega characterization}) and 
Proposition~\ref{prop Frobenius via fixed points}.

(c) This follows immediately from Equation~(\ref{eqn omega characterization}).

\medskip  
To show the uniqueness statement, let $\omega\in \QQ B^\Delta(S,S)$ satisfy (a)--(c). 
Since $\omega$ satisfies (c), we obtain $\Phi_L(\omega)=0$ for $L\notin \calS$. 
We fix $P\in\Ftilde$. By (\ref{eqn omega characterization}), it suffices to show 
that\marginpar{eqn omega claim}
\begin{equation}\label{eqn omega claim}
  a_{\phi,\psi}:=  \Phi_{\Delta(\phi(P),\phi\psi^{-1},\psi(P))} (\omega) = \frac{|S|}{|\Hom_{\calF}(P,S)|}\,,
\end{equation}
for all $P\in\Ftilde$ and $\phi,\psi\in\Hom_{\calF}(P,S)$. Here, we made use of 
the bijection in (\ref{eqn DeltaF bijection}) and the equality 
$|\Hom_{\calF}(\phi(P),S)|=|\Hom_{\calF}(P,S)|$. We first show that 
$a_{\phi,\psi}=a_{\id_P,\id_P}$ for all $\phi,\psi\in\Hom_{\calF}(P,S)$. Since 
$\Phi_{\Delta(\psi(P),\psi,P)}(\omega)\neq 0$ by Property~(c), 
Proposition~\ref{prop Frobenius via fixed points}(a) implies that $a_{\phi,\psi}=a_{\phi,\id_P}$, 
since $\omega$ is right Frobenius. Similarly, replacing $\phi$ and $\psi$ in the 
statement of Proposition~\ref{prop Frobenius via fixed points}(b) with $\id_P$ 
and $\phi$, respectively, implies $a_{\id_P,\id_P}=a_{\phi,\id_P}$. Now we can 
abbreviate $a_{\phi,\psi}$ by a constant rational number $c$. Writing $\zeta_P$ 
for the $P$-component of $\sigmatilde^{\calF}(\omega)$, the endomorphism $\zeta_P$ satisfies
\begin{equation*}
  \zeta_P([\psi])=\sum_{[\phi]\in\Hombar_{\calF}(P,S)} \frac{c}{|C_S(\phi(P))|}\ [\phi]\,,
\end{equation*}
for all $\psi\in\Hom_{\calF}(P,S)$. Since $\omega$ is an idempotent, so is $\zeta_P$. 
Evaluating the equation $\zeta_P\circ\zeta_P=\zeta_P$ yields
\begin{equation*}
  \sum_{[\phi]\in\Hombar_{\calF}(P,S)} \frac{c^2}{|C_S(\phi(P))|} = c\,.
\end{equation*}
Since $c\neq 0$ by Property~(c), and since $C_S(\phi(P))$ is the stabilizer of the 
element $\phi$ in the $S$-set $\Hom_{\calF}(P,S)$, we obtain
\begin{equation*}
  c=\Bigl(\sum_{[\phi]\in\Hombar_{\calF}(P,S)} \frac{1}{|C_S(\phi(P))|}\Bigr)^{-1}
  =\Bigl(\sum_{\phi\in\Hom_{\calF}(P,S)} \frac{1}{|S|}\Bigr)^{-1} =
  \frac{|S|}{|\Hom_{\calF}(P,S)|},
\end{equation*}
as desired in Equation~(\ref{eqn omega claim}). This completes the proof of the lemma.
\end{proof}

\begin{lemma}\mylabel{lemma omega to S}
Let $\omega\in\QQ B^\Delta(S,S)$ be a right Frobenius element such that
$\Delta(S)\in\Fix(\omega)$ and such that $\Fix(\omega)$ is closed under 
taking subgroups. Then $\mathcal{S}:=\Fix(\omega)\in\Sys(S)$.
\end{lemma}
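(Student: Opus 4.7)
The plan is to verify, one by one, the five conditions of Hypothesis~\ref{hyp S1} for $\mathcal{S}=\mathrm{Fix}(\omega)\subseteq \Delta_{S,S}$. Conditions (ii) (closure under subgroups) and (iv) ($\Delta(S)\in\mathcal{S}$) are explicit hypotheses. Condition (i) ($S\times S$-conjugation invariance) is automatic, because the mark maps $\Phi_L$ depend only on the $S\times S$-conjugacy class of $L$, so $\mathrm{Fix}(\omega)$ is necessarily a union of conjugacy classes. The real content lies in verifying condition (v) (symmetry) and condition (iii) (closure under the $*$-product); both will follow from the right Frobenius property via Proposition~\ref{prop Frobenius via fixed points}(a).

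For condition (v), given $L=\Delta(U,\alpha,V)\in\mathcal{S}$ I will apply Proposition~\ref{prop Frobenius via fixed points}(a) with $P=V$, taking $\phi_1$ to be the inclusion $\iota\colon V\hookrightarrow S$ and $\phi_2=\alpha\colon V\to U\subseteq S$. A direct computation gives $\phi_1\phi_2^{-1}=\alpha^{-1}$, and after dividing by $\Phi_L(\omega)\ne 0$ one obtains
\[
   \Phi_{\Delta(V)}(\omega)=\Phi_{\Delta(V,\alpha^{-1},U)}(\omega)=\Phi_{L^\circ}(\omega).
\]
Since $\Delta(V)\le\Delta(S)\in\mathcal{S}$ and $\mathcal{S}$ is closed under taking subgroups, $\Phi_{\Delta(V)}(\omega)\ne 0$, hence $\Phi_{L^\circ}(\omega)\ne 0$ and $L^\circ\in\mathcal{S}$.

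For condition (iii), suppose $L=\Delta(U_1,\alpha_1,V_1)$ and $M=\Delta(U_2,\alpha_2,V_2)$ are in $\mathcal{S}$. A direct computation gives
\[
   L*M=\Delta(\alpha_1(W),\alpha_1\alpha_2|_{V'},V')\quad\text{where}\quad W:=V_1\cap U_2,\ V':=\alpha_2^{-1}(W).
\]
Define the restricted subgroups $L_W:=\Delta(\alpha_1(W),\alpha_1|_W,W)\le L$ and $M_W:=\Delta(W,\alpha_2|_{V'},V')\le M$; by subgroup closure both lie in $\mathcal{S}$, and one checks $L_W*M_W=L*M$. Now apply Proposition~\ref{prop Frobenius via fixed points}(a) with $P=V'$, $\phi_1=\alpha_1\alpha_2|_{V'}$, $\phi_2=\alpha_2|_{V'}$; then $\phi_1\phi_2^{-1}=\alpha_1|_W$, and the Frobenius identity reads
\[
   \Phi_{L*M}(\omega)\,\Phi_{M_W}(\omega)=\Phi_{L_W}(\omega)\,\Phi_{M_W}(\omega).
\]
Since $M_W,L_W\in\mathcal{S}$, both $\Phi_{M_W}(\omega)$ and $\Phi_{L_W}(\omega)$ are nonzero, so $\Phi_{L*M}(\omega)=\Phi_{L_W}(\omega)\ne 0$ and $L*M\in\mathcal{S}$.

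The only delicate point is the bookkeeping in (iii): one must identify $L*M$ with $L_W*M_W$ for the correctly chosen restrictions $L_W,M_W$, because the Frobenius identity produces a composition only when the middle groups agree. Once one sets up $W=V_1\cap U_2$ and passes to $L_W\le L$, $M_W\le M$ so that the common middle group is $W$, the right Frobenius property together with subgroup closure does the rest; condition (v) is a special case of the same method applied with the inclusion as one of the injections.
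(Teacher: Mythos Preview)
Your proof is correct and follows essentially the same approach as the paper's: verify conditions (i)--(v) of Hypothesis~\ref{hyp S1} directly, deducing (v) and (iii) from the right Frobenius identity of Proposition~\ref{prop Frobenius via fixed points}(a) combined with subgroup closure. The only cosmetic difference is that the paper first treats the special case of (iii) where the middle groups coincide and then reduces the general case to it, whereas you pass to the restrictions $L_W,M_W$ and apply the Frobenius identity once; the underlying idea is identical.
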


\begin{proof}
We verify that $\mathcal{S}$ satisfies (i)--(v) in Hypothesis~\ref{hyp S1}.
By definition, $\mathcal{S}$ is closed under $S\times S$-conjugation, and by 
our hypotheses it is closed
under taking subgroups and contains $\Delta(S)$. In particular,
$\Phi_{\Delta(P)}(\omega)\neq 0$, for all $P\le S$.

\smallskip
Let $\Delta(\psi(P),\psi,P)\in\mathcal{S}$, so that
$\Phi_{\Delta(\psi(P),\psi,P)}(\omega)\neq 0$. Since $\omega$ is a right Frobenius element,
Proposition~\ref{prop Frobenius via fixed points}(a) with $\varphi=\id_P$ gives
$$0\neq \Phi_{\Delta(P)}(\omega)=\Phi_{\Delta(P,\psi^{-1},\psi(P))}(\omega),$$
thus $\Delta(\psi(P),\psi,P)^\circ=\Delta(P,\psi^{-1},\psi(P))\in\mathcal{S}$.

\smallskip
It remains to show that if $\Delta(\phi(P),\phi,P)\in\calS$ and
$\Delta(\psi(Q),\psi,Q)\in\calS$ then also $\Delta(\phi(P),\phi,P)*\Delta(\psi(Q),\psi,Q)\in\calS_{S,S}$.

Suppose first that $\psi(Q)=P$.
Since $\omega$ is a right Frobenius element, Proposition~\ref{prop Frobenius via fixed points}(a)
implies
$\Phi_{\Delta(\phi(P),\phi,P)}(\omega)\Phi_{\Delta(Q,\psi^{-1},\psi(Q))}(\omega)
=\Phi_{\Delta(\phi(P),\phi\psi,Q)}(\omega)\Phi_{\Delta(Q,\psi^{-1},\psi(Q))}(\omega).$
As we have just shown, $\Phi_{\Delta(Q,\psi^{-1},\psi(Q))}(\omega)\neq 0$,
so that $0\neq \Phi_{\Delta(\phi(P),\phi,P)}(\omega)=\Phi_{\Delta(\phi(P),\phi\psi,Q)}(\omega)$, thus
$\Delta(\phi(P),\phi,P)*\Delta(\psi(Q),\psi,Q)=\Delta(\phi(P),\phi\psi,Q)\in\calS$.

In the general case we have
\begin{align*}
\Delta(\phi(P),\phi,P)&*\Delta(\psi(Q),\psi,Q)=\Delta(\phi(P\cap \psi(Q)),\phi\psi,\psi^{-1}(P\cap \psi(Q)))\\
&=\Delta(\phi(P\cap \psi(Q)),\phi, P\cap \psi(Q))*\Delta(P\cap \psi(Q),\psi,\psi^{-1}(P\cap \psi(Q))).
\end{align*}
Since $\calS$ is closed under taking subgroups, our considerations
in the special case above show
that also $\Delta(\phi(P),\phi,P)*\Delta(\psi(Q),\psi,Q)\in\calS$, and the proof is complete.
\end{proof}

\begin{remark}\mylabel{rem bijection fs idemp}
By Lemma~\ref{lemma omegaF} and Lemma~\ref{lemma omega to S} we obtain
maps
\begin{equation*}
  f\colon\Fus(S)\to\Idem(S)\,,\quad \calF\mapsto \omega_{\calF}\,,\quad\text{and}\quad
  g\colon\Idem(S)\to\Sys(S)\,,\quad \omega\mapsto \Fix(\omega)\,.
\end{equation*}
Moreover, we write
\begin{equation*}
  h\colon \Fus(S)\liso\Sys(S)\,, \quad \calF\mapsto\calS(\calF)\,,
\end{equation*}
for the bijection in Theorem~\ref{thm Fus Sys bijection}. Then we obtain a triangle 
diagram\marginpar{eqn triangle}
\begin{diagram}[50]
  \Fus(S) & & \Ear{f} & & \Idem(S) &&
  & \seaR{h} & & \swaR{g} & &&
  & & \Sys(S) & & &&
\end{diagram}
\end{remark}

Summarizing our previous considerations, we obtain the following theorem.

\begin{theorem}\mylabel{thm bijection fs idemp}
The maps $f$, $g$ and $h$ in Remark~\ref{rem bijection fs idemp} are bijections,
and the triangle diagram in Remark~\ref{rem bijection fs idemp} is commutative.
\end{theorem}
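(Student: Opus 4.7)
The plan is to derive everything from the preceding lemmas, with the triangle commutativity serving as the backbone. First, I would observe that Lemma~\ref{lemma omegaF}(c) gives $\Fix(\omega_{\calF}) = \calS(\calF)$ for every fusion system $\calF$, which is precisely the statement $g \circ f = h$. Since $h$ is already a bijection by Theorem~\ref{thm Fus Sys bijection}, this commutativity immediately forces $f$ to be injective and $g$ to be surjective. So the remaining work is to establish the surjectivity of $f$ (equivalently, the injectivity of $g$).

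For the surjectivity of $f$, let $\omega \in \Idem(S)$. By Definition~\ref{def idem}, $\omega$ is an idempotent Frobenius element of $\QQ B^\Delta(S,S)$ with $\Delta(S) \in \Fix(\omega)$ and $\Fix(\omega)$ closed under taking subgroups. Lemma~\ref{lemma omega to S} then guarantees that $\calS := \Fix(\omega)$ lies in $\Sys(S)$. Setting $\calF := h^{-1}(\calS) \in \Fus(S)$, the element $\omega_{\calF} = f(\calF)$ provided by Lemma~\ref{lemmaomegaF} satisfies properties (a)--(c) of that lemma with $\Fix(\omega_{\calF}) = \calS(\calF) = \calS$. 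But $\omega$ itself satisfies these same three properties: it is an idempotent and a Frobenius element by hypothesis, and $\Fix(\omega) = \calS = \calS(\calF)$ by construction. The uniqueness statement at the end of Lemma~\ref{lemma omegaF} then yields $\omega = \omega_{\calF} = f(\calF)$, proving surjectivity.

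For the injectivity of $g$, suppose $g(\omega_1) = g(\omega_2)$, and set $\calF := h^{-1}(\Fix(\omega_1)) = h^{-1}(\Fix(\omega_2))$. Both $\omega_1$ and $\omega_2$ are idempotent Frobenius elements whose fixed-point set equals $\calS(\calF)$; by the uniqueness part of Lemma~\ref{lemma omegaF} again, each must equal $\omega_{\calF}$, hence $\omega_1 = \omega_2$. This completes the verification that $f$, $g$, and $h$ are all bijections, with $g \circ f = h$.

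Since the proposal uses only results already assembled (Theorem~\ref{thm Fus Sys bijection}, Lemma~\ref{lemma omegaF} including its uniqueness clause, Lemma~\ref{lemma omega to S}, and the definitions of $\Fus(S)$, $\Sys(S)$, $\Idem(S)$), there is no substantial obstacle remaining; the whole content of the theorem has been packaged into the auxiliary lemmas, and only a brief diagram chase is needed. If any step might require care, it is ensuring that the uniqueness in Lemma~\ref{lemma omegaF} is applied correctly in both directions—once to identify $\omega$ with $\omega_{\calF}$ for surjectivity of $f$, and once to identify $\omega_1$ with $\omega_2$ for injectivity of $g$—but both invocations are immediate given that $\Fix(\omega) = \calS(\calF)$ by choice of $\calF$.
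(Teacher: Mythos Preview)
Your proof is correct and follows essentially the same route as the paper's: establish $g\circ f=h$ via Lemma~\ref{lemma omegaF}(c), deduce injectivity of $f$ and surjectivity of $g$ from the bijectivity of $h$, and then obtain surjectivity of $f$ by invoking the uniqueness clause of Lemma~\ref{lemma omegaF}. Your separate verification of the injectivity of $g$ is redundant (it follows from $g=h\circ f^{-1}$ once $f$ is a bijection), but harmless.
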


\begin{proof}
We already know from Theorem~\ref{thm Fus Sys bijection} that $h$ is a bijection.

Let $\mathcal{F}$ be a fusion system on $S$. By Lemma~\ref{lemma omegaF}(c), we have
$g(f(\calF))=g(\omega_{\calF})=\Fix(\omega_{\calF}) = \calS(\calF) = h(\calF)$. Therefore, 
$g\circ f=h$. Since $h$ is bijective, $f$ is injective and $g$ is surjective. 

It suffices now to show that $f$ is surjective. So let $\omega\in\Idem(S)$ and set 
$\mathcal{F}:=h^{-1}(g(\omega))$.
Then $\Fix(\omega)= g(\omega)= h(\calF) = \cal{S}(\cal{F})$ and $\omega$ satisfies the 
Properties (a)--(c) in Lemma~\ref{lemma omegaF}. By the uniqueness statement in 
Lemma~\ref{lemma omegaF}, we obtain $\omega=\omega_{\calF}=f(\calF)$. 
This shows that the map $f$ is surjective 
and the proof is complete.
\end{proof}

\begin{remark}\label{rem bijection RS}
In \cite{RS} Ragnarsson and Stancu proved that there is a bijection between 
the set $\Fus^*(S)$ of saturated fusion systems on $S$ and the set 
$\Idem^*(S):=\ZZ_{(p)} B^\Delta(S,S)\cap \Idem(S)$. The correspondence associates to 
$\omega\in\Idem^*(S)$ the fusion system $\calF$ satisfying $\calS(\calF)=\Fix(\omega)$. 
Thus, the bijection $f\colon\Fus(S)\liso \Idem(S)$ is an extension of their bijection 
$\Fus^*(S)\to\Idem^*(S)$. Note that Equation~(\ref{eqn omega characterization}) yields 
a very explicit description of $\omega_{\calF}$ in terms of the fixed points of 
$\omega_{\calF}$ that was not apparent in \cite{RS}. We learnt that, in the case where $\calF$ is a saturated fusion system, Equation~(\ref{eqn omega characterization}) was independently proved by S.~Reeh, cf. \cite[Theorem 2.4.11]{Re}.

If $\calF$ is a saturated fusion system then 
\begin{equation*}
  \sigmatilde^{\mathcal{F}}(\omega_{\mathcal{F}})\in 
  \prod_{P\in\Ftilde}\End_{\ZZ_{(p)}\Out_{\mathcal{F}}(P)}(\ZZ_{(p)} \Hombar_{\mathcal{F}}(P,S))\,.
\end{equation*}
In fact, the $\QQ$-algebra isomorphism from Theorem~\ref{thm sigmatilde} (with $R=\QQ$)
restricts to the injective $\ZZ_{(p)}$-algebra homomorphism in 
(\ref{eqn sigmatildeRSG}) with $R=\ZZ_{(p)}$.
\end{remark}

The previous remark and our description of
$\sigmatilde^{\mathcal{F}}(\omega_{\mathcal{F}})$ in \ref{noth bijection fs idemp1}
lead to the following corollary.

\begin{corollary}\mylabel{cor sat fs}
Let $\mathcal{F}$ be a saturated fusion system on $S$, and let
$P\le S$. Then
\begin{equation}\label{eqn sat fs}
\frac{|S|}{|\Hom_{\mathcal{F}}(P,S)||C_S(P)|}\in\ZZ_{(p)}.
\end{equation}
\end{corollary}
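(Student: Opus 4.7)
The plan is to combine the explicit formula~(\ref{eqn epsilon definition}) for the endomorphism $\varepsilon_P$ with the integrality assertion recorded in Remark~\ref{rem bijection RS}: for a saturated fusion system $\calF$, the image $\sigmatilde^{\calF}(\omega_{\calF})$ under the mark map $\sigmatilde^{\calF}$ lies in $\prod_{P \in \Ftilde} \End_{\ZZ_{(p)}\Out_{\calF}(P)}(\ZZ_{(p)}\Hombar_{\calF}(P,S))$, not merely in the corresponding product over $\QQ$.

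First I would fix $P \in \Ftilde$ and read off the $P$-component $\varepsilon_P$ of $\sigmatilde^{\calF}(\omega_{\calF})$ from~(\ref{eqn epsilon definition}). Since $\varepsilon_P$ preserves the $\ZZ_{(p)}$-lattice spanned by $\Hombar_{\calF}(P,S)$, every matrix coefficient of $\varepsilon_P$ with respect to this standard basis must lie in $\ZZ_{(p)}$; by inspection, these matrix coefficients are precisely the rational numbers $\frac{|S|}{|C_S(\phi(P))|\,|\Hom_{\calF}(P,S)|}$, indexed by $[\phi] \in \Hombar_{\calF}(P,S)$. So for every $P \in \Ftilde$ and every $\phi \in \Hom_{\calF}(P,S)$, we have $\frac{|S|}{|C_S(\phi(P))|\,|\Hom_{\calF}(P,S)|} \in \ZZ_{(p)}$.

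Next I would reduce the assertion for an arbitrary subgroup $Q \le S$ to what has just been established for representatives in $\Ftilde$. Given $Q$, choose the unique $P \in \Ftilde$ with $P =_{\calF} Q$ and an $\calF$-isomorphism $\alpha \colon P \myiso Q$. Composing $\alpha$ with the inclusion $Q \hookrightarrow S$ yields some $\phi \in \Hom_{\calF}(P,S)$ with $\phi(P) = Q$. Moreover, precomposition with $\alpha^{-1}$ induces a bijection $\Hom_{\calF}(Q,S) \to \Hom_{\calF}(P,S)$, so $|\Hom_{\calF}(Q,S)| = |\Hom_{\calF}(P,S)|$. Substituting this together with $C_S(\phi(P)) = C_S(Q)$ into the previous paragraph's integrality statement yields exactly (\ref{eqn sat fs}).

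The argument is purely structural, so no serious obstacle should arise; the only substantive input is Remark~\ref{rem bijection RS}, which itself rests on the Ragnarsson--Stancu integrality result for characteristic idempotents of saturated fusion systems.
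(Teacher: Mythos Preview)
Your argument is correct and follows exactly the route the paper intends: the corollary is stated as an immediate consequence of Remark~\ref{rem bijection RS} together with the explicit description of $\sigmatilde^{\calF}(\omega_{\calF})$ in~\ref{noth bijection fs idemp1}, and you have simply spelled out those two ingredients and the reduction from an arbitrary $Q\le S$ to a representative $P\in\Ftilde$. There is nothing to add.
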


One might ask whether the converse of Corollary~\ref{cor sat fs}
is also true, that is, whether every fusion system $\mathcal{F}$ on $S$
satisfying the Condition (\ref{eqn sat fs}) for every $P\le S$ has to be saturated.
However, this is not the case, as the following example shows.

\begin{example}\mylabel{expl not sat}
Suppose that $p=2$ and that $S=\langle x,y\mid x^2=y^2=1, \, xy=yx\rangle$ is a Klein four-group.
Let $\mathcal{F}$ be the fusion system on $S$, defined
in Examples~\ref{expl fusion systems}(c). Hence, $\Aut_{\mathcal{F}}(S)=1$ and, for every
$Q<S$, we have $\Hom_{\mathcal{F}}(Q,S)=\{\alpha|_{Q}\mid \alpha\in A\}$
where $\mathfrak{A}_3\cong A\unlhd \Aut(S)\cong\mathfrak{S}_3$. We set
$Q_1:=\langle x\rangle$, $Q_2:=\langle y\rangle$, and $Q_3:=\langle xy\rangle$. Then, for
$i,j\in\{1,2,3\}$, we have $|\Hom_{\mathcal{F}}(Q_j,Q_i)|=1$ and therefore $|\Hom_{\calF}(Q_i,S)|=3$.
Moreover, $\Hom_{\mathcal{F}}(S,S)=\{\id_S\}$,
and $\Hom_{\mathcal{F}}(1,1)=\{\id_1\}$.
Thus, for $Q\le S$, we have
$$\frac{|S|}{|\Hom_{\mathcal{F}}(Q,S)||C_S(Q)|}=\begin{cases}
1,&\text{if } Q\in\{1,S\},\\
\frac{1}{3},&\text{if } Q\in\{Q_1,Q_2,Q_3\}.
\end{cases}$$
Hence $\mathcal{F}$ satisfies (\ref{eqn sat fs}) in Corollary~\ref{cor sat fs}, but
$\mathcal{F}$ is not saturated, as we have already seen in 
Examples~\ref{expl fusion systems}(c).
\end{example}

Although fusion systems on $S$ satisfying
Condition (\ref{eqn sat fs}) in Corollary~\ref{cor sat fs}
need not be saturated, they still do share some properties
with saturated fusion systems on $S$, as the following proposition shows. 
In the case of saturated fusion systems, see
\cite[Proposition~1.16]{BCGLO} and \cite[Lemma~1.6.2]{Re} for
Part~(a), and for instance \cite[Proposition~2.5]{L} for Part~(b).

For every positive natural number $n$, we denote
by $n_p$ the highest $p$-power dividing $n$.

\begin{proposition}\mylabel{prop generalized sat fs}
Let $\mathcal{F}$ be a fusion system on $S$ satisfying (\ref{eqn sat fs})
in Corollary~\ref{cor sat fs} and let $P\le S$. 

{\rm (a)} The number $f_{\mathcal{F}}(P)$ of $S$-conjugacy classes of fully $\mathcal{F}$-normalized
subgroups of $S$ that are $\mathcal{F}$-isomorphic to $P$ is not divisible by $p$.

{\rm (b)} The subgroup $P$ is fully $\mathcal{F}$-normalized if and only if $P$ is
fully $\mathcal{F}$-centralized and $\Aut_S(P)\in \Syl_p(\Aut_{\mathcal{F}}(P))$.
\end{proposition}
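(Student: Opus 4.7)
\quad The strategy is to unpack the hypothesis (\ref{eqn sat fs}) in $p$-adic terms, applied not only to $P$ but to every subgroup in its $\mathcal{F}$-isomorphism class. Let $Q_1,\ldots,Q_n$ be representatives of the $S$-conjugacy classes of subgroups of $S$ that are $\mathcal{F}$-isomorphic to $P$, ordered so that $N_1:=|N_S(Q_1)|$ is maximal; set $A:=|\Aut_{\mathcal{F}}(P)|$. Since $|\mathrm{Iso}_{\mathcal{F}}(P,Q_i)|=A$ for each $i$, one has $|\Hom_{\mathcal{F}}(P,S)|=A\cdot\sum_i[S:N_S(Q_i)]$. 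Substituting this and applying (\ref{eqn sat fs}) with $P$ replaced by an arbitrary representative $Q\in\{Q_1,\ldots,Q_n\}$, the hypothesis rewrites as
\begin{equation*}
\frac{N_1}{A\cdot|C_S(Q)|\cdot\Sigma}\in\ZZ_{(p)},\quad\text{where }\Sigma:=\sum_{i=1}^n\frac{N_1}{|N_S(Q_i)|}.
\end{equation*}

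The key observation is that every term $N_1/|N_S(Q_i)|$ is a positive $p$-power integer, equal to $1$ precisely when $Q_i$ is fully $\mathcal{F}$-normalized, so that $\Sigma\equiv f_{\mathcal{F}}(P)\pmod{p}$. Writing $v_p$ for the $p$-adic valuation, specialization to $Q=Q_1$ combined with the identity $N_1=|\Aut_S(Q_1)|\cdot|C_S(Q_1)|$ reduces the displayed condition to $v_p(|\Aut_S(Q_1)|)\ge v_p(A)+v_p(\Sigma)$. The reverse inequality $v_p(|\Aut_S(Q_1)|)\le v_p(A)$ holds because $\Aut_S(Q_1)$ is a $p$-subgroup of $\Aut_{\mathcal{F}}(Q_1)$, so both inequalities must be equalities: $\Aut_S(Q_1)\in\Syl_p(\Aut_{\mathcal{F}}(Q_1))$ and $v_p(f_{\mathcal{F}}(P))=v_p(\Sigma)=0$. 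This establishes part~(a) and, upon choosing $Q_1=P$ when $P$ is fully $\mathcal{F}$-normalized, the Sylow conclusion $\Aut_S(P)\in\Syl_p(\Aut_{\mathcal{F}}(P))$ of part~(b)~$(\Rightarrow)$.

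For the remaining half of (b)~$(\Rightarrow)$, I would re-apply the displayed condition with $Q=Q_i$ for every $i$. Using $v_p(\Sigma)=0$ from above, this yields $v_p(|C_S(Q_i)|)\le v_p(N_1)-v_p(A)=v_p(|C_S(P)|)$, where the second equality uses $P=Q_1$, the identity $N_1=|\Aut_S(P)|\cdot|C_S(P)|$, and $v_p(|\Aut_S(P)|)=v_p(A)$ from the preceding step. Thus $|C_S(Q)|\le|C_S(P)|$ for every $Q=_{\mathcal{F}}P$, i.e.\ $P$ is fully $\mathcal{F}$-centralized. Conversely, (b)~$(\Leftarrow)$ needs no input from (\ref{eqn sat fs}): assuming $P$ is fully $\mathcal{F}$-centralized with $\Aut_S(P)\in\Syl_p(\Aut_{\mathcal{F}}(P))$, any $Q=_{\mathcal{F}}P$ has $\Aut_S(Q)$ as a $p$-subgroup of $\Aut_{\mathcal{F}}(Q)\cong\Aut_{\mathcal{F}}(P)$, giving $|\Aut_S(Q)|\le|\Aut_S(P)|$; together with $|C_S(Q)|\le|C_S(P)|$, the identity $|N_S(\cdot)|=|\Aut_S(\cdot)|\cdot|C_S(\cdot)|$ then yields $|N_S(Q)|\le|N_S(P)|$.

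The main subtlety I expect is the bookkeeping of applying the hypothesis to every $S$-conjugacy class representative rather than only to a fully normalized one. The crucial mechanism is the mod-$p$ reduction $\Sigma\equiv f_{\mathcal{F}}(P)\pmod{p}$, which converts a single $p$-adic valuation inequality into the statement about $f_{\mathcal{F}}(P)$ in part~(a), and in turn allows the second round of applications of the hypothesis to control the $p$-parts of the remaining centralizer orders.
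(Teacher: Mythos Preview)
Your proposal is correct and follows essentially the same route as the paper. Both arguments rewrite the hypothesis using $|\Hom_{\calF}(P,S)|=|\Aut_{\calF}(P)|\cdot\sum_i[S:N_S(Q_i)]$, observe that $\sum_i N_1/|N_S(Q_i)|\equiv f_{\calF}(P)\pmod p$, and then apply the resulting $p$-adic bound at a fully $\calF$-normalized representative to force $v_p(f_{\calF}(P))=0$ and the Sylow condition simultaneously; your valuation inequality $v_p(|\Aut_S(Q_1)|)\ge v_p(A)+v_p(\Sigma)$ is exactly the paper's inequality $c(Q)\le|S|$ for the auxiliary function $c(Q)=|C_S(Q)|\cdot|\Hom_{\calF}(Q,S)|_p$, unwound. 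The one noteworthy difference is the converse direction of (b): the paper deduces it by feeding Part~(a) back into the formula for $c(Q)$, whereas you give the direct elementary argument $|N_S(Q)|=|\Aut_S(Q)|\cdot|C_S(Q)|\le|\Aut_S(P)|\cdot|C_S(P)|=|N_S(P)|$, which is valid in any fusion system and indeed requires no appeal to~(\ref{eqn sat fs}).
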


\begin{proof}
Let $[P]_{\cal{F}}$ denote the set of subgroups of $S$ that are $\cal{F}$-isomorphic to $P$. Moreover, let
$\{P_1,\ldots,P_n\}$ be a transversal for the $S$-conjugacy classes of subgroups
in $[P]_{\mathcal{F}}$ and assume that $P_1$ is fully $\calF$-normalized. We consider the
function
\begin{equation*}
  c\colon [P]_{\calF}\to \NN\,,\quad Q\mapsto |C_S(Q)|\cdot |\Hom_{\calF}(Q,S)|_p\,.
\end{equation*}
Note that the positive integer $c(Q)$ is a $p$-power and that $c(Q)\le |S|$ for all $Q\in[P]_{\calF}$, by the 
condition in (\ref{eqn sat fs}). Since the number $|\Hom_{\calF}(Q,S)|$ is independent of 
$Q\in[P]_{\calF}$, the function $c$ takes its maximum value precisely at the fully 
$\calF$-centralized subgroups in $[P]_{\calF}$.
Let $Q\in[P]_{\calF}$ be arbitrary. Then
\begin{equation*}
  |\Hom_{\calF}(Q,S)|=|\Aut_{\calF}(Q)|\cdot|[P]_{\calF}| = 
  |\Aut_{\calF}(Q):\Aut_S(Q)|\cdot|N_S(Q):C_S(Q)|\cdot|[P]_{\calF}|
\end{equation*}
and 
\begin{equation*}
  |[P]_{\calF}|=\sum_{i=1}^n  |S:N_S(P_i)|=|S:N_S(P_1)|(f_{\calF}(P)+pm_P)
\end{equation*}
for some integer $m_P$. Therefore,\marginpar{eqn cQ}
\begin{equation}\label{eqn cQ}
  c(Q)=|\Aut_{\calF}(Q):\Aut_S(Q)|_p\cdot\frac{|N_S(Q)|}{|N_S(P_1)|}\cdot|S|\cdot(f_{\calF}(P)+pm_P)_p\,.
\end{equation}

Now assume that $Q$ is fully $\calF$-normalized. Then the last equation becomes
\begin{equation*}
  c(Q)=|\Aut_{\calF}(Q):\Aut_S(Q)|_p\cdot(f_{\calF}(P)+pm_P)_p\cdot |S|\,.
\end{equation*}
Since $c(Q)\le |S|$, we obtain $|\Aut_{\calF}(Q):\Aut_S(Q)|_p=1$, i.e., $\Aut_S(Q)\in\Syl_p(\Aut_{\calF}(Q))$, 
and $p\nmid f_{\calF}(P)$. Moreover, we obtain $c(Q)=|S|$ and, therefore, that the 
maximal value of $c$ is equal to $|S|$ and that it is achieved  at $Q$. Thus, $Q$ 
is fully $\calF$-centralized. This shows Part~(a) and one direction of Part~(b).

Conversely, assume that $Q\in[P]_{\calF}$ is fully $\calF$-centralized and that $|\Aut_{\calF}(Q):\Aut_S(Q)|_p=1$. 
Then $c(Q)=|S|$, the maximal value of $c$. On the other hand, by 
Equation~(\ref{eqn cQ}) and by Part~(a), we have 
\begin{equation*}
  c(Q)=\frac{|N_S(Q)|}{|N_S(P_1)|}\cdot |S|\,.
\end{equation*}
This implies $|N_S(Q)|=|N_S(P_1)|$, and $Q$ is fully $\calF$-normalized. 
This completes the proof of Part~(b).
\end{proof}



\begin{thebibliography}{XXXXX}
\bibitem[A]{Asch1} {\sc M.~Aschbacher:} Generation of fusion systems of characteristic 2-type. {\sl Invent.\ Math.\/}~{\bf 180} (2010), 225Ð--299.

\bibitem[AC]{Asch2} {\sc M.~Aschbacher, A.~Chermak:} A group-theoretic approach to a family of 2-local finite groups constructed by Levi and Oliver. {\sl Ann.\ of Math.\ (2)} {\bf 171} (2010), 881Ð--978.

\bibitem[BX]{BX} {\sc R.~Boltje, B.~Xu:} On $p$-permutation equivalences: between Rickard equivalences and isotypies. {\sl Trans.\ Amer.\ Math.\ Soc.\/}~{\bf 360} (2008), 5067--5087.


\bibitem[Bc96a]{BoucJA} {\sc S.~Bouc:} Foncteurs d'ensembles munis d'une double action. 
{\sl J.\ Algebra}~{\bf 183} (1996), 664Ð--736.

\bibitem[Bc96b]{BoucJA'} {\sc S.~Bouc:} Construction de foncteurs entre cat\'egories de $G$-ensembles. 
{\sl J.\ Algebra}~{\bf 183} (1996), 737Ð--825.

\bibitem[Bc00]{BoucHA} {\sc S.~Bouc:} Burnside rings. In: Handbook of algebra, Vol. 2, 739-Ð804, North-Holland, Amsterdam, 2000.

\bibitem[Bc06]{Boucendo} {\sc S.~Bouc:} The Dade group of a $p$-group.
                           {\sl Invent.\ Math.\/} {\bf 164} (2006),
                           189--231.

\bibitem[Bc10]{BoucSLN} {\sc S.~Bouc:} Biset functors for finite groups. Lecture Notes in Mathematics, 1990. Springer-Verlag, Berlin, 2010.

\bibitem[BCGLO]{BCGLO} {\sc C.~Broto, N.~Castellana, J.~Grodal, R.~Levi, B.~Oliver:} 
Extensions of $p$-local finite groups. {\sl Trans.\  Amer.\  Math.\  Soc.\/}~{\bf 359} (2007), 3791--3858.

\bibitem[BLO03]{BLO1} {\sc C.~Broto, R.~Levi, B.~Oliver:} The homotopy theory of fusion systems.
{\sl J.\ Amer.\ Math.\ Soc.\/}~{\bf 16} (2003), 779Ð--856.

\bibitem[BLO04]{BLO2} {\sc C.~Broto, R.~Levi, B.~Oliver:} The theory of $p$-local groups: 
a survey. In: Homotopy theory: relations with algebraic geometry, group cohomology, and algebraic $K$-theory,  51Ð--84, Contemp.\ Math., 346, Amer. Math. Soc., Providence, RI, 2004.

\bibitem[Br]{Broue} {\sc M.~Brou\'e:} On Scott modules and $p$-permutation modules: 
an approach through the Brauer morphism. {\sl Proc.\ Amer.\ Math.\ Soc.\/}~{\bf 93} (1985), 401--Ð408. 

\bibitem[CR]{CR} {\sc C.~W.~Curtis, I.~Reiner:} Methods of representation theory,
                           Vol.~2.
                           J.~Wiley \& Sons, New York 1987.

\bibitem[D1]{Dress1} {\sc A.~W.~M.~Dress:} Contributions to the theory of induced
                           representations. In: ed.~H.~Bass, Algebraic
                           K-Theory~II,
                           Lecture Notes in Mathematics 342,
                           Springer-Verlag,
                           Berlin Heidelberg New York 1973, 183--240.
                           
\bibitem[D2]{Dress2} {\sc A.~W.~M.~Dress:} A characterization of solvable groups.
                           {\sl Math.\ Zeitschr.\/}~{\bf 110}
                           (1969), 213--217.

\bibitem[Ga]{Gabriel} {\sc P.~Gabriel:} Des cat\'egories ab\'eliennes. 
{\sl Bull.\ Soc.\ Math.\ France}~{\bf 90} (1962), 323Ð--448.

\bibitem[Gl]{Gluck} {\sc D.~Gluck:} Idempotent formula for the Burnside algebra 
with applications to the $p$-subgroup simplicial complex. {\sl Illinois J.\ Math.\/}~{\bf 25} (1981), 63Ð--67.

\bibitem[L]{L} {\sc M.~Linckelmann:} Introduction to fusion systems. In: Group 
representation theory,  79Ð--113, EPFL Press, Lausanne, 2007.

\bibitem[L2]{L2} {\sc M.~Linckelmann:} Trivial source bimodule rings for 
blocks and $p$-permutation equivalences. {\sl Trans.\ Amer.\ Math.\ Soc.\/}~{\bf 361} (2009), 1279Ð--1316.

\bibitem[MP]{MartinoPriddy} {\sc J.~Martino, S.~Priddy:} Stable homotopy 
classification of $BG^{\wedge}_p$.  {\sl Topology}~{\bf 34} (1995), 633--Ð649.

\bibitem[N-T]{NT}{\sc H.~Nagao, Y.~Tsushima:}
    Representations of finite groups.
    Academic Press, Boston, 1989.
    
\bibitem[R]{R} {\sc K.~Ragnarsson:} Classifying spectra of saturated fusion systems. 
{\sl Algebr.\ Geom.\ Topol.\/}~{\bf 6} (2006), 195Ð--252.

\bibitem[RS]{RS} {\sc K.~Ragnarsson, R.~Stancu:} Saturated fusion systems as idempotents in the double Burnside ring. Preprint. arXiv:0911.0085.

\bibitem[Re]{Re} {\sc S.~P.~Reeh:} Burnside rings of fusion systems. Master's Thesis. University of
Copenhagen, 2010.

\bibitem[Ri]{Ri} {\sc J.~Rickard:}
Splendid equivalences: Derived categories
                           and permutation modules.
                           {\sl Proc.\ London Math.\ Soc.\/}~{\bf 72}
                           (1996), 331--358.

\bibitem[W]{W} {\sc P.~Webb:} Stratifications and Mackey functors II: 
globally defined Mackey functors. {\sl J.\ K-Theory}~{\bf 6} (2010), 99Ð--170.

\end{thebibliography}
\end{document}